\providecommand{\abs}[1]{\left\vert#1\right\vert}
\providecommand{\norm}[1]{\left\Vert#1\right\Vert}
\providecommand{\br}[1]{\langle #1 \rangle}
\providecommand{\ns}[1]{\norm{#1}^2}
\def\nab{\nabla}
\def\dt{\partial_t}
\def\hal{\frac{1}{2}}
\def\vchi{\text{\large{$\chi$}}}
\def\ls{\lesssim}
\def\p{\partial}
\def\pa{\partial^\alpha}
\def\sg{\mathbb{D}}
\def\naba{\nab_{\mathcal{A}}}
\def\diva{\diverge_{\mathcal{A}}}
\def\hhh{{_0}H^1(\Omega)}
\def\a{\mathcal{A}}
\def\f{\mathcal{F}}
\def\g{\mathcal{G}}
\def\n{\mathcal{N}}
\def\w{\mathcal{W}}
\def\y{\mathcal{Y}}
\def\H{\mathcal{H}}
\def\k{\mathcal{K}}
\def\N{\mathbb{N}}
\def\R{\mathbb{R}}
\def\S{\mathcal{S}}
\def\T{\mathbb{T}}
\def\se{\mathcal{E}}
\def\seb{\bar{\se}}
\def\sd{\mathcal{D}}
\def\sdb{\bar{\sd}}
\def\ks{K(\sigma,\gamma)}
\def\cs{C(\sigma,\gamma)}
\def\st{\;\vert\;}
\DeclareMathOperator{\diverge}{div}
\title[Shear-flow stability]{Asymptotic stability of shear-flow solutions to incompressible viscous free boundary problems with and without surface tension}
\author{Ian Tice}
\address{
Department of Mathematical Sciences\\
Carnegie Mellon University\\
Pittsburgh, PA 15213, USA
}
\email[I. Tice]{iantice@andrew.cmu.edu}
\thanks{I. Tice was supported by a Simons Foundation Grant (\#401468) and an NSF CAREER Grant (DMS \#1653161).  This work was initiated at the Institute for Computational and Experimental Research in Mathematics (ICERM) during the Spring 2017 semester program ``Singularities and Waves In Incompressible Fluids,'' which was supported by an NSF Grant (DMS \#1439786). }
\subjclass[2010]{Primary: 35Q30, 35R35, 76E17; Secondary: 35B40, 76D45, 76E05 }
\keywords{Free boundary problems, Viscous surface waves, shear flows}
\newtheorem{lem}{Lemma}[section]
\newtheorem{prop}[lem]{Proposition}
\newtheorem{thm}[lem]{Theorem}
\newtheorem{remark}[lem]{Remark}
\theoremstyle{definition}
\numberwithin{equation}{section} 
\begin{document}

\begin{abstract}
This paper concerns the dynamics of a layer of incompressible viscous fluid lying above a rigid plane and with an upper boundary given by a free surface.  The fluid is subject to a constant external force with a horizontal component, which arises in modeling the motion of such a fluid down an inclined plane, after a coordinate change.  We consider the problem both with and without surface tension for horizontally periodic flows.  This problem gives rise to shear-flow equilibrium solutions, and the main thrust of this paper is to study the asymptotic stability of the equilibria in certain parameter regimes.  We prove that there exists a parameter regime in which sufficiently small perturbations of the equilibrium at time $t=0$ give rise to global-in-time solutions that return to equilibrium exponentially in the case with surface tension and almost exponentially in the case without surface tension.  We also establish a vanishing surface tension limit, which connects the solutions with and without surface tension.   
\end{abstract}

\maketitle


\section{Introduction }

\subsection{Free boundary Navier-Stokes equations}

Consider a layer of viscous incompressible fluid evolving above a flat plane in three dimensions.  We assume that the fluid is subjected to a uniform force field of the form $f =(\gamma, 0 , -g)  = \gamma e_1 - g e_3 \in \R^3$, where $\gamma \ge 0$ and $g > 0$ are constants.  Such a force arises, for instance, if we consider a fluid sliding down an inclined plane, subject to a constant gravitational field $G \in \R^3$, and we change coordinates to view the plane as orthogonal to $e_3$.  In this case the constants $\gamma,g$ can be obtained by resolving $G$ into components perpendicular to the plane (corresponding to $-g e_3$ here) and along the plane (corresponding to $\gamma e_1$ here).  We assume that $g>0$ in order to specify that $G$ is not purely tangential to the plane.  

In addition to the above assumption on the external force acting on the fluid, we will assume three other main features.  First, we assume that the fluid is bounded above by a free surface that evolves with the fluid.  Second, we assume that above the free interface the fluid is bordered by a trivial fluid of constant pressure (for instance a vacuum). Third, we assume that the fluid is horizontally periodic so that we can determine its dynamics by studying a single horizontal periodicity cell.

Let us now state the equations of motion for the problem.  We will model the periodicity of the fluid by introducing the horizontal cross section 
\begin{equation}
\Sigma = (L_1 \mathbb{T}) \times (L_2 \mathbb{T}),  
\end{equation}
where $L_1, L_2 >0$ are horizontal periodicity lengths and $L_i \mathbb{T} = \R / (L_i\mathbb{Z})$ is a standard flat $1-$torus of periodicity $L_i$.  We will assume that the moving upper boundary of the fluid is given by the graph of an unknown function $\eta: \Sigma \times [0,\infty) \to \R$, which means that the moving fluid domain is modeled by the three dimensional set
\begin{equation}
\Omega(t) = \{ x = (x',x_3) \in \Sigma \times \R \st -b < x_3 < \eta(x',t)\},
\end{equation}
where $b>0$ is a constant depth parameter.  Note that the lower boundary of $\Omega(t)$ is the fixed and unmoving set 
\begin{equation}
 \Sigma_b = \{ x= (x',x_3) \in \Sigma \times \R \st  x_3 = -b\},
\end{equation}
while the moving upper surface is 
\begin{equation}
 \Sigma(t) = \{ x =(x',x_3) \in \Sigma \times \R \st  x_3 = \eta(x',t)\}.
\end{equation}

For each $t\ge 0$ the fluid is described by its velocity and pressure functions $(\bar{u},\bar{p}) :\Omega(t) \to \R^3 \times \R$.  We require that $(\bar{u}, \bar{p}, \eta)$ satisfy the  incompressible
Navier-Stokes equations in $\Omega(t)$ for $t>0$: 
\begin{equation}\label{ns_euler}
\begin{cases}
\partial_t \bar{u} + \bar{u} \cdot \nabla \bar{u} + \nabla \bar{p} = \mu \Delta \bar{u} + \gamma e_1 - g e_3 & \text{in }
\Omega(t) \\ 
\diverge{\bar{u}}=0 & \text{in }\Omega(t) \\ 
\partial_t \eta = \bar{u}_3 - \bar{u}_1 \partial_{1}\eta - \bar{u}_2 \partial_{2}\eta & 
\text{on }\Sigma(t) \\ 
(\bar{p} I -  \mu \mathbb{D}(\bar{u}) ) \nu = (P_{ext}   - \sigma \mathfrak{H}(\eta))\nu & \text{on } \Sigma(t) \\ 
\bar{u} = 0 & \text{on } \Sigma_b.
\end{cases}
\end{equation}
Here $\mu >0$ is the fluid viscosity,  $(\mathbb{D} \bar{u})_{ij} = \partial_i \bar{u}_j + \partial_j \bar{u}_i$ the symmetric gradient of $\bar{u}$,  $\nu$ is the outward-pointing unit normal vector on $\Sigma(t)$,  $I$ the $3 \times 3$ identity matrix, $P_{ext} \in \R$ is the constant pressure above the fluid, $\sigma \ge 0$ is the surface tension coefficient, and 
\begin{equation}\label{H_def}
 \mathfrak{H}(\eta) = \diverge\left( \frac{\nab \eta}{\sqrt{1+\abs{\nab \eta}^2}}\right)
\end{equation}
is (minus) twice the mean curvature of $\Sigma(t)$.  The first two equations in \eqref{ns_euler} are the standard incompressible Navier-Stokes equations, the third is the kinematic transport equation for $\eta$, the fourth is the balance of stress at the interface, and the fourth is the no-slip boundary condition at the bottom.  The problem is augmented with initial data $\eta_0 : \Sigma \to (-b,\infty)$ which determines the initial domain $\Omega_0$, as well as an initial velocity field $\bar{u}_0 : \Omega_0 \to \R^3$.  Note that the assumption  $\eta_0 > -b$ on $\Sigma$ means that $\Omega_0$ is well-defined.   

Without loss of generality, we may assume that $\mu = g = 1$.  Indeed, a standard scaling argument allows us to rescale so that $\mu = g =1$, at the price of multiplying the parameters $\gamma,$ $\sigma,$ $b$, and the periodicity lengths $L_1,L_2$ by positive constants.  This means that, up to renaming  $\gamma,$ $\sigma,$ $b$, $L_1$, and $L_2$, we arrive at \eqref{ns_euler} with $\mu=g=1$.

We will also assume that the initial surface function satisfies the ``zero average'' condition 
\begin{equation}\label{z_avg}
\frac{1}{L_1 L_2} \int_\Sigma \eta_0 =0.
\end{equation}
This is not a loss of generality due to the assumption that $\eta_0 > -b$ on $\Sigma$: see the introduction of \cite{guo_tice_per} for an explanation of how to obtain this condition via a coordinate shift.  Note that for sufficiently regular solutions to \eqref{ns_euler}, the zero average condition persists in time since $\dt \eta = \bar{u} \cdot \nu \sqrt{1 + (\p_{1} \eta)^2 + (\p_{2} \eta)^2}$, and hence
\begin{equation}\label{avg_prop}
 \frac{d}{dt}  \int_{\Sigma} \eta =  \int_{\Sigma} \dt \eta  = \int_{ \Sigma(t) } \bar{u} \cdot \nu = \int_{\Omega(t)} \diverge{\bar{u}} = 0.
\end{equation}

\subsection{Steady shear solution}
 
It is a simple matter to construct a steady shear-flow solution to \eqref{ns_euler}.  We set $\eta =0$ in order to reduce to the flat slab domain $\Omega = \{x \in \Sigma \times \R \st -b < x_3 < 0\}$.  We then define the smooth function $s: \R \to \R$ via  
\begin{equation}\label{shear_s_def}
 s(x_3) = \frac{\gamma}{2 }(b^2 - x_3^2).
\end{equation}
We then define the equilibrium shear velocity to be $U: \Sigma \times \R \to \R^3$ given by  
\begin{equation}\label{shear_U_def}
 U(x) = (s(x_1),0,0) = s(x_3) e_1,
\end{equation}
and we define the equilibrium hydrostatic pressure to be the smooth function $P: \Sigma \times \R \to \R$ given by 
\begin{equation}\label{shear_P_def}
 P(x) = P_{ex} -  x_3.
\end{equation}

Trivial calculations show that 
\begin{equation}\label{shear_computations}
\nab P = -g e_3, \;  U \cdot \nab U =0, \; - \Delta U = \gamma e_1, 
\text{ and }  \sg U(x) =  s'(x_3) M \text{ for }
M = 
\begin{pmatrix}
 0 & 0 & 1 \\
 0 & 0 & 0 \\
 1 & 0 & 0
\end{pmatrix}
\end{equation}
on the entire set $\Sigma \times \R$, not just in $\Omega$.  Moreover, $U$ satisfies $U\vert_{\Sigma_b}=0$ and $\sg U = 0$ on $\Sigma \times \{x_3 =0\}$.  Consequently, the triple
\begin{equation}\label{shear_soln}
\bar{u} = U, \bar{p} = P, \eta =0  
\end{equation}
constitute a steady shear-flow solution to \eqref{ns_euler} (recalling that $\mu =g =1$).

\subsection{Eulerian perturbation }

A key feature of the steady shear-flow solution \eqref{shear_soln} is that the velocity $U$ and pressure $P$ are defined in all of $\Sigma \times \R$.  This allows us to easily rewrite the system \eqref{ns_euler} as a perturbation of the shear-flow solution in Eulerian coordinates.  Indeed, from \eqref{shear_computations} we see that the triple $(\bar{u},\bar{p}, \eta)$ solves \eqref{ns_euler} if and only if $\bar{u} = u + U$ and $\bar{p} = p +P$ and the triple $(u,p,\eta)$ solves 
\begin{equation}\label{ns_perturbed}
\begin{cases}
 \dt u + u \cdot \nab u + U \cdot \nab u + u \cdot \nab U + \nab p - \Delta u =0 & \text{in }\Omega(t) \\
 \diverge u = 0 & \text{in } \Omega(t) \\
 \dt \eta = u_3 - u_2 \p_2 \eta -(u_1 + s(\eta)) \p_1 \eta & \text{on } \Sigma(t) \\
 (pI - \sg u) \nu = [( \eta - \sigma \mathfrak{H}(\eta))I + s'(\eta) M] \nu & \text{on } \Sigma(t) \\
 u = 0 & \text{on } \Sigma_b,
\end{cases}
\end{equation}
where $s$ and $M$ are as defined in \eqref{shear_s_def} and \eqref{shear_computations}, respectively.  Note that in the perturbative formulation the shear velocity $U$ interacts with $u$ and $p$ in the Navier-Stokes equations and with $\eta$ and $u$ in the kinematic transport and dynamic stress balance equations.  We will study the problem in this formulation, with the aim being to show that the equilibrium solution $u=0$, $p=0$, $\eta=0$, which corresponds to the steady shear-flow solution in \eqref{ns_euler}, is asymptotically stable for some range of the parameters.

In order to justify why we might expect such a stability result, let us examine the natural energy-dissipation equation associated to \eqref{ns_perturbed}.  Reynolds' transport theorem allows us to compute 
\begin{equation}
 \frac{d}{dt} \int_{\Omega(t)} \hal\abs{u}^2 = \int_{\Omega(t)} \dt \frac{\abs{u}^2}{2} + (u+ U) \cdot \nab \frac{\abs{u}^2}{2} = \int_{\Omega(t)} (\dt u + u \cdot \nab u + U \cdot \nab u)\cdot u. 
\end{equation}
Plugging in the first equation in \eqref{ns_perturbed} then allows us to compute from this (see Proposition \ref{lin_evolve_geo} for details) that we have the following natural energy-dissipation identity for sufficiently regular solutions to \eqref{ns_perturbed}
\begin{multline}\label{nat_en_diss}
 \frac{d}{dt} \left( \int_{\Omega(t)} \hal \abs{u}^2 + \int_{\Sigma} \frac{1}{2} \abs{\eta}^2 + \sigma \sqrt{1 + \abs{\nab \eta}^2}   \right) + \int_{\Omega(t)} \hal \abs{\sg u}^2 
 = \int_{\Omega(t)} - (u\cdot \nab U) \cdot u  + \int_{\Sigma(t)} s'(\eta) M\nu \cdot u \\
 - \int_\Sigma (\eta - \sigma \mathfrak{H}(\eta)) s(\eta) \p_1 \eta.
\end{multline}
The identity \eqref{nat_en_diss} provides heuristics for when we can expect stability, namely when we can absorb the quadratic terms on the right onto the left.  In order to do this we will clearly need to work in the context of small perturbations and in a small $\gamma$ regime.  Outside of this regime it is possible that the terms on the right side of \eqref{nat_en_diss} act as a source of energy, leading to the unstable growth of the energetic term in parentheses.

\subsection{Previous work }

The problem \eqref{ns_euler} and its variants have attracted much attention in the mathematics community, so we will attempt only a brief survey of the literature.  The instability of viscous shear flows in fixed domains is a classical question, going back to the work of Orr \cite{orr} and Sommerfeld \cite{sommerfeld}, whose eponymous equation appears in the spectral theory of the linearized bulk equations.  The viscous instability of shear flows in rigid domains was developed formally in the physics literature by many authors, including  Heisenberg \cite{heisenberg}, Lin \cite{lin}, and Tollmien \cite{tollmien}.  Remarkably, a rigorous mathematical proof of long-wave instability of steady shear flows without free boundary appeared only recently in the work of Grenier-Guo-Nguyen \cite{gren_guo_nguyen}.

The equilibrium shear solution \eqref{shear_soln} for the free boundary problem \eqref{ns_euler} depends on the specific geometry of the domain, but one can seek stationary solution in different geometries as well.  Abergel-Bona \cite{abergel_bona} constructed solutions to the 2D steady Stokes problem over an infinite inclined plane with a non-flat bottom and surface tension.  Abergel-Bailly \cite{abergel_bailly} constructed steady Stokes solutions in 3D without surface tension.  Pileckas-Solonnikov \cite{pileckas_solonnikov} studied stationary Navier-Stokes flow in 2D domains with unbounded lower boundaries and surface tension.

When $\gamma =0$ the dynamics of the free boundary problem \eqref{ns_euler} are well-understood for small data.  Nishida-Teramoto-Yoshihara constructed small data solutions for the problem with surface tension and showed that the solutions exist globally and decay to equilibrium exponentially fast.  The corresponding problem without surface tension was handled by Guo-Tice \cite{guo_tice_per}, who constructed global solutions that decay almost exponentially.

When $\gamma \neq 0$ less is known about the free boundary problem \eqref{ns_euler}.  Sun \cite{sun} studied the 2D problem with fixed positive surface tension in the context of semigroups, proving stability under some assumptions on the spectrum of the linearized operator that were verified numerically.  Ueno \cite{ueno} considered the 2D problem with fixed positive surface tension in the thin film regime and derived uniform estimates with respect to the thinness parameter, valid locally in time. Padula \cite{padula_1,padula_2} studied the 3D problem with fixed positive surface tension and developed sufficient conditions for low-regularity asymptotic stability under the a priori assumption of the existence of global smooth solutions.  Teramoto-Tomoeda \cite{teramoto_tomoeda} studied the linearized 2D problem with fixed positive surface tension and proved that the linearized problem generates an analytic semigroup, but they did not discuss the nonlinear theory. Nishida-Teramoto-Yoshihara \cite{nis_ter_yos_bif} developed the Hopf bifurcation analysis for the 2D problem with fixed surface tension under some assumptions on the spectrum of the linearized operator in order to construct time periodic solutions.  To the best of our knowledge no nonlinear stability results are known without surface tension.

\subsection{Reformulation in a flattened coordinate system}

The moving domain $\Omega(t)$ is inconvenient for analysis, so we will reformulate the problem \eqref{ns_perturbed} in  the fixed equilibrium domain 
\begin{equation}
\Omega= \{x \in \Sigma \times \R \st  -b < x_3 < 0  \}.
\end{equation}
We will think of $\Sigma$ as the upper boundary of $\Omega$ and view $\eta$ as a function on $\Sigma \times [0,\infty)$.  We then define 
\begin{equation}
 \bar{\eta}:= \mathcal{P} \eta = \text{harmonic extension of }\eta \text{ into the lower half space},
\end{equation}
where $\mathcal{P} \eta$ is defined by \eqref{poisson_def_per}.  We then flatten the coordinate domain via the smooth mapping $\Phi: \Omega \times [0,\infty) \to \Sigma \times \R$ given by 
\begin{equation}\label{mapping_def}
 \Phi(x,t) = (x_1,x_2, x_3 +  \bar{\eta}(x,t)(1+ x_3/b )) \in \Omega(t).
\end{equation}
Note that $\Phi(\cdot,t)$ extends to $\bar{\Omega}$, and that $\Phi(\Sigma,t) = \Sigma(t)$ and $\Phi(\cdot,t)\vert_{\Sigma_b} = Id_{\Sigma_b}$, i.e. $\Phi$ maps $\Sigma$ to the free surface and keeps the lower surface fixed.   We have
\begin{equation}\label{A_def}
 \nab \Phi = 
\begin{pmatrix}
 1 & 0 & 0 \\
 0 & 1 & 0 \\
 A & B & J
\end{pmatrix}
\text{ and }
 \mathcal{A} := (\nab \Phi^{-1})^T = 
\begin{pmatrix}
 1 & 0 & -A K \\
 0 & 1 & -B K \\
 0 & 0 & K
\end{pmatrix}
\end{equation}
for 
\begin{equation}\label{ABJ_def}
\begin{split}
A &= \p_1 \bar{\eta} \tilde{b},\;\;\;  B = \p_2 \bar{\eta} \tilde{b},  \\ 
J &=  1+ \bar{\eta}/b + \p_3 \bar{\eta} \tilde{b},  \;\;\; K = J^{-1}, \\ 
\tilde{b}  &= (1+x_3/b).  
\end{split}
\end{equation} 
Here $J = \det{\nab \Phi}$ is the Jacobian of the coordinate transformation.

The matrix $\a$ in \eqref{A_def} allows us to define a collection of $\a-$dependent differential operators.  We define the differential operators $\naba$ and $\diva$ with their actions given by
\begin{equation}
 (\naba f)_i := \a_{ij} \p_j f \text{ and } \diva X := \a_{ij}\p_j X_i
\end{equation}
for appropriate $f$ and $X$.  We also extend $\diva$ to act on symmetric tensors in the usual way.  We also write
\begin{equation}
 (\sg_{\a} u)_{ij} =  \a_{ik} \p_k u_j + \a_{jk} \p_k u_i \text{ and }  S_\a(p,u) = pI - \sg_{\a} u,
\end{equation}
and we define
\begin{equation}\label{N_def}
\n := (-\p_1 \eta, - \p_2 \eta,1)
\end{equation}
for the non-unit normal to $\Sigma(t)$.  In the new coordinate system the system of PDEs \eqref{ns_perturbed} becomes the following system:
\begin{equation}\label{ns_geometric}
\begin{cases}
 \dt u - \dt \bar{\eta} \tilde{b} K \p_3 u + u \cdot \naba u + U \circ \Phi \cdot \naba u + u \cdot \naba (U \circ \Phi) + \diva S_{\a}(p,u) =0 & \text{in }\Omega \\
 \diva u =0 & \text{in } \Omega \\
 S_\a(p,u) \n = [(\eta - \sigma \mathfrak{H}(\eta)) I -\gamma  \eta M]\n & \text{on } \Sigma \\
 \dt \eta = u \cdot \n - s(\eta) \p_1 \eta & \text{on }\Sigma \\
 u = 0 & \text{on } \Sigma_b.
\end{cases}
\end{equation}

\section{Main results and discussion }

\subsection{Notation and definitions }\label{sec_notation}

In order to properly state our main results we must first introduce some notation and define various functionals that will be used throughout the paper.  We begin with some notational conventions.

\textbf{Constants:}  Throughout the paper $C>0$ will denote a generic constant that can depend on $\Omega$ and its dimensions as well as on $g$ and $\mu$ (though these are both scaled to unity, so the dependence on these is implicit), but \emph{not} on the parameters $\gamma,\sigma$.  Such constants are referred to as ``universal,'' and they are allowed to change from one inequality to another. We employ the notation $a \ls b$ to mean that $a \le C b$ for a universal constant $C>0$.  

We will also need to track constants that depend on $\sigma$ and $\gamma$.  To this end we introduce two pieces of notation.  When we write $\ks$ we mean a positive constant that depends on $\sigma$ and $\gamma$ in such a way that 
\begin{equation}
  \liminf_{(\sigma,\gamma)\to 0} \ks \in (0,\infty) \text{ and }
 \limsup_{(\sigma,\gamma)\to \infty} \frac{\ks}{\max\{\sigma,\gamma \}^j } < \infty \text{ for an integer }j >0.
\end{equation}
In other words, $\ks$ denotes a constant that remains positive as $\sigma,\gamma \to 0$ and grows at most like a polynomial in $\sigma,\gamma$.  We will also write $\cs$ to denote a positive constant depending on $\sigma$ and $\gamma$ such that 
\begin{equation}
  \liminf_{ \gamma\to 0} \cs \in (0,\infty) \text{ and }
 \limsup_{(\sigma,\gamma)\to \infty} \frac{\cs}{\max\{\sigma,\gamma \}^j } < \infty \text{ for an integer }j >0.
\end{equation}
The key difference is that constants $\cs$ are allowed to blow up as $\sigma \to 0$.

\textbf{Norms:} We write $H^k(\Omega)$ with $k\ge 0$ and $H^s(\Sigma)$ with $s \in \R$ for the usual $L^2-$based Sobolev spaces.  In particular $H^0 = L^2$.  In the interest of concision, we neglect to write $H^k(\Omega)$ or $H^k(\Sigma)$ in our norms and typically write only $\norm{\cdot}_{k}$.  The price we pay for this is some minor ambiguity in the set on which the norm is computed, but we mitigate potential confusion by always writing the space for the norm when traces are involved.

\textbf{Multi-indices:} We will write $\mathbb{N}^k$ for the usual set of multi-indices, where here we employ the convention that $0 \in \mathbb{N}$.  For $\alpha \in \mathbb{N}^k$ we define the spatial differential operator $\p^\alpha = \p_1^{\alpha_1} \cdots \p_k^{\alpha_k}$.  We will also write $\mathbb{N}^{1+k}$  to denote the set of space-time multi-indices 
\begin{equation}
 \mathbb{N}^{1+k} = \{(\alpha_0,\alpha_1,\dotsc,\alpha_k) \st \alpha_i \in \mathbb{N} \text{ for }0\le i \le k\},
\end{equation}
For a multi-index $\alpha \in \mathbb{N}^{1+k}$ we define the differential operator $\p^\alpha = \dt^{\alpha_0} \p_1^{\alpha_1} \cdots \p_k^{\alpha_k}$.  Also, for a space-time multi-index $\alpha \in \mathbb{N}^{1+k}$ we use the parabolic counting scheme $\abs{\alpha} = 2\alpha_1 + \alpha_1 + \cdots + \alpha_k$.

\textbf{Energy and dissipation functionals:}  Throughout the paper we will make frequent use of various energy and dissipation functionals, and we will track their dependence on an integer $n \ge 3$ and the surface tension coefficient $\sigma \ge 0$.  We define these now.  The basic and full energy functionals, respectively, are defined as:
\begin{equation}\label{E_bar_def}
 \seb_n^\sigma =   \sum_{\substack{\alpha \in \mathbb{N}^{1+2} \\\abs{\alpha} \le 2n }} \ns{ \p^\alpha u}_{0}  +  \ns{\p^\alpha \eta}_{0} + \sigma \ns{ \nab \p^\alpha \eta }_{0}   
\end{equation}
and
\begin{multline}\label{E_def}
 \se_n^\sigma = \seb_n^\sigma + \sum_{j=0}^n \ns{\dt^j u}_{2n-2j} + \sum_{j=0}^{n-1} \ns{\dt^j p}_{2n-2j-1} +  \ns{\eta}_{2n} + \sigma \ns{\eta }_{2n+1} \\
 + \ns{\dt \eta}_{2n-1} + \sigma \ns{\dt \eta}_{2n-1/2}  +\sum_{j=2}^n \ns{\dt^j \eta}_{2n-2j+3/2}. 
\end{multline}
The corresponding basic and full dissipation functionals are
\begin{equation}\label{D_bar_def}
 \sdb_n = \sum_{\substack{\alpha \in \mathbb{N}^{1+2} \\\abs{\alpha} \le 2n }} \ns{ \sg \p^\alpha u}_0
\end{equation}
and
\begin{multline}\label{D_def}
 \sd_n^\sigma =\sdb_n + \sum_{j=0}^n \ns{\dt^j u}_{2n-2j+1} + \sum_{j=0}^{n-1} \ns{\dt^j p}_{2n-2j} 
+   \sum_{j=0}^{n-1}\left((1+\gamma^2) \ns{\dt^j \eta}_{2n-2j-1/2} + \sigma^2 \ns{\dt^j \eta}_{2n-2j+3/2} \right)  \\
+ \ns{\dt \eta}_{2n-1} + \sigma^2 \ns{\dt \eta}_{2n+1/2} 
 + \ns{\dt^2 \eta}_{2n-2} + \sigma^2 \ns{\dt^2 \eta}_{2n-3/2} + \sum_{j=3}^{n+1} \ns{\dt^j \eta}_{2n-2j+5/2}.
\end{multline}
We will also need to make frequent reference to two functionals that are not naturally of energy or dissipation type.  We refer to these as
\begin{equation}\label{transport_def}
 \f_{n}=  \ns{ \eta}_{2n+1/2}
\end{equation}
and
\begin{equation}\label{K_def}
 \k = \ns{u}_{C^2_b(\Omega)} +  \ns{u}_{H^3(\Sigma)} + \ns{\eta}_{5/2}.
\end{equation}

\subsection{Local existence theory }

Before stating our results on the global existence and long-term behavior of solutions to \eqref{ns_geometric} we must first discuss the local existence theory.  For the sake of brevity we will not attempt to properly develop this theory in this paper.  This is justified by the fact that there are now numerous examples of how to use a priori estimates of the form we develop in this paper to design a scheme of approximate problems that can be used to construct local-in-time solutions to \eqref{ns_geometric} in a functional setting appropriate for the a priori estimates.   We refer to \cite{guo_tice_per,jang_tice_wang_gwp,tan_wang,wu} for four such examples.  Instead, here we will only state the local existence result that can be proved with these techniques.  

In the local existence statement we will need two extra ingredients.  The first is to define the spaces 
\begin{equation}
 \hhh  = \{ v \in H^1(\Omega; \R^3) \st v \vert_{\Sigma_b} =0\},
\end{equation}
and 
\begin{equation}
 \mathcal{X}_T = \{ u \in L^2([0,T];\hhh ) \st \diverge_{\a(t)} u(t) = 0 \text{ for a.e. }t \in [0,T]\},
\end{equation}
where here we view $\a(t)$ as determined by the $\eta: \Omega \times [0,T] \to \R$ coming from the solution.  The second ingredient is the idea of compatibility conditions.  Here the point is that in order to produce solutions to \eqref{ns_geometric} in a given regularity class, the initial data for $u$ and $\eta$ must be ``compatible'' in the sense that they satisfy a certain finite collection of equations.  These equations are simple to derive but quite cumbersome to write in full detail, so rather than attempt to do so here, we will again only refer to \cite{guo_tice_per,jang_tice_wang_gwp,tan_wang,wu}.  

The local existence result is then the following theorem.

\begin{thm}\label{lwp}
Let $n \ge 3$ be an integer and suppose that the initial data $(u_0,\eta_0)$ satisfy 
\begin{equation}
 \ns{u_0}_{2n} + \ns{\eta_0}_{2n+1/2} + \sigma \ns{\nab \eta_0}_{2n} < \infty
\end{equation}
as well as the natural compatibility conditions associated with $n$.  Then there exist $0 < \delta_\ast,T_\ast <1$ such that if 
\begin{equation}
 \ns{u_0}_{2n} + \ns{\eta_0}_{2n+1/2} + \sigma \ns{\nab \eta_0}_{2n} \le \delta_\ast
\end{equation}
and $0 < T \le T_\ast$, then there exists a unique triple $(u,p,\eta)$ that achieves the initial data, solves \eqref{ns_geometric}, and obeys the estimates 
\begin{equation}
 \sup_{0 \le t \le T} \left( \se_{n}^\sigma(t) + \f_{n}(t)\right) + \int_0^T \sd_{n}^\sigma(t) dt +  \ns{\dt^{n+1} u}_{(\mathcal{X}_T)^\ast}
\ls  \ns{u_0}_{2n} + \ns{\eta_0}_{2n+1/2} + \sigma \ns{\nab \eta_0}_{2n}.
\end{equation}
\end{thm}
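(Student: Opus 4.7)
The plan is to follow the now-standard construction of strong solutions to viscous free boundary problems in flattened coordinates, as carried out in \cite{guo_tice_per,jang_tice_wang_gwp,tan_wang,wu}, with the only genuinely new ingredient being the treatment of the shear coupling terms involving $U \circ \Phi$ in \eqref{ns_geometric}. The body of this paper will develop the nonlinear a priori estimates in the norm $\se_n^\sigma + \f_n + \int_0^t \sd_n^\sigma$, and these same estimates govern the approximate solutions produced by the construction sketched below.

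As a preliminary step I would mollify the initial data to obtain smooth $(u_0^\ep, \eta_0^\ep)$ that converge to $(u_0, \eta_0)$ in the norm on the right-hand side of the stated estimate and that satisfy the $n$th-order compatibility conditions. Preserving compatibility is nontrivial because these conditions recursively determine the initial time-derivatives $(\dt^j u)(0)$, $0 \le j \le n$, in terms of the spatial data via the equations themselves; the standard remedy, exhibited in detail in \cite{guo_tice_per}, is to define the initial time-derivatives from the equations, mollify, and then apply small divergence- and trace-correcting adjustments to restore compatibility.

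I would then run a Picard iteration. Given $(u^k, p^k, \eta^k)$ of sufficient regularity with $\|\eta^k\|_\infty \le b/2$ so that $\Phi^k$ is a diffeomorphism with $J^k \ge 1/2$, first solve the transport equation $\dt \eta^{k+1} + s(\eta^{k+1}) \p_1 \eta^{k+1} = u^k \cdot \n^{k+1}$ on $\Sigma$ with $\eta^{k+1}(0) = \eta_0^\ep$ by the method of characteristics, which fixes $\bar{\eta}^{k+1}$, $\a^{k+1}$, $J^{k+1}$. Then solve the linearized time-dependent Stokes system
\begin{equation}
 \dt u^{k+1} - \dt \bar{\eta}^{k+1} \tilde{b} K^{k+1} \p_3 u^{k+1} + \diverge_{\a^{k+1}} S_{\a^{k+1}}(p^{k+1}, u^{k+1}) = F^k, \quad \diverge_{\a^{k+1}} u^{k+1} = 0,
\end{equation}
in $\Omega$, with the corresponding perturbed stress boundary condition on $\Sigma$ and $u^{k+1} = 0$ on $\Sigma_b$; here $F^k$ and the boundary source collect the nonlinear convection, the shear contributions from $U \circ \Phi^{k+1}$, and the lower-order geometric terms from the previous iterate. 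Existence at this linear level follows from a Galerkin approximation in a divergence-free basis for the $\a^{k+1}$-weighted inner product, combined with elliptic regularity for the stationary Stokes operator to recover the pressure and upgrade spatial regularity.

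The main obstacle is obtaining \emph{uniform-in-$k$} bounds on $\se_n^\sigma + \f_n + \int_0^T \sd_n^\sigma + \ns{\dt^{n+1} u^{k+1}}_{(\x_T)^\ast}$. For this I would apply the nonlinear energy-dissipation estimates developed later in the paper to each iterate; the delicate point is that the top-order estimates produce commutators between $\p^\alpha$ and the $\a^{k+1}$-dependent differential operators that must be absorbed modulo terms of the schematic form $\sqrt{\se_n^\sigma}\,\sd_n^\sigma + \se_n^\sigma \f_n$. Under the smallness hypothesis $\se_n^\sigma(0) \le \delta_\ast$, a standard continuity argument then closes the estimate on an interval $[0,T_\ast]$ whose length depends only on $\delta_\ast$. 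Contraction of the iteration in a weaker norm (for instance with $n$ replaced by $n-1$) yields a strong limit $(u^\ep, p^\ep, \eta^\ep)$, while weak-$\ast$ lower semicontinuity transfers the uniform high-regularity bound to this limit; sending $\ep \to 0$ and reusing the uniform estimate produces the claimed solution. Uniqueness follows by a standard energy estimate for the difference of two solutions at a lower regularity level, closed via Gronwall.
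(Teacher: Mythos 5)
The paper does not give a proof of Theorem \ref{lwp}; it explicitly defers the construction to \cite{guo_tice_per,jang_tice_wang_gwp,tan_wang,wu}, so there is no in-paper argument to compare yours against. Your sketch is consistent with the standard scheme carried out in those references (mollify while preserving compatibility, alternate a kinematic solve for $\eta$ with a linear $\a$-Stokes solve for $(u,p)$, extract uniform bounds from the a priori estimates, close by contraction in a lower topology, send the mollification parameter to zero), and the identification of the shear coupling terms as the only genuinely new ingredient is correct.

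Two places need tightening. First, you keep $\n^{k+1}$ and $s(\eta^{k+1})$ in the kinematic step, making it a quasilinear transport PDE; the references instead lag $\eta$ entirely, so that $\dt\eta^{k+1}$ is prescribed in terms of the previous iterate and the step is a trivial time integration. Your version is workable, but you then owe a separate argument (via a transport estimate of the type in Lemma \ref{sobolev_transport}) that the characteristics solve actually propagates the $H^{2n+1/2}(\Sigma)$ regularity needed for $\f_n$, and uniformly in $k$. Second, and more substantively, your continuity argument is phrased as closing ``under the smallness hypothesis $\se_n^\sigma(0) \le \delta_\ast$.'' But when $\sigma = 0$ the energy $\se_n^0(0)$ controls only $\ns{\eta_0}_{2n}$, \emph{not} $\ns{\eta_0}_{2n+1/2}$, which is the quantity appearing in the theorem's smallness hypothesis and which seeds $\f_n(0)$. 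The bootstrap must therefore be run in the full norm $\ns{u_0}_{2n} + \ns{\eta_0}_{2n+1/2} + \sigma\ns{\nab\eta_0}_{2n}$, with $\f_n$ tracked separately through the transport inequality rather than absorbed into the energy, exactly as the global analysis later does in Section \ref{sec_nost_gwp}. As written, your proposal risks closing an argument that does not control the $\f_n$ portion of the conclusion.
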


\begin{remark}
 The functional framework of Theorem \ref{lwp} is sufficient to justify all of the a priori estimates we develop in this paper.
\end{remark}

\begin{remark}
 The compatibility conditions of Theorem \ref{lwp} allow us to construct the initial data for $\dt^j u(\cdot,0)$ and $\dt^j \eta(\cdot,0)$ for $j=1,\dotsc,n$ as well as the data $\dt^j p(\cdot,0)$ for $j=0,\dotsc,n-1$.  As such it makes sense to discuss $\se_{n}^\sigma(0)$ and $\f_{n}(0)$.
\end{remark}

\subsection{Statement of main results }

We now state our main results on the existence of global solutions to \eqref{ns_geometric}.  Our first result establishes the global well-posedness of the problem for a fixed positive value of surface tension as well as shows that these solutions decay at an exponential rate.  In this context we can work in a functional framework determined by $\se_{2}^\sigma$ and $\sd_{2}^\sigma$, i.e. with $n =2$.

\begin{thm}\label{gwp_st}
Fix $\sigma >0$.  Suppose that the initial data $(u_0,\eta_0)$ satisfy $\se_{2}^\sigma(0) < \infty$ as well as the compatibility conditions of Theorem \ref{lwp}.  There exist constants $\gamma_0 = \gamma_0(\sigma)\in (0,1)$ and $\kappa_0 = \kappa_0(\sigma) \in (0,1)$ such that if $\se_{2}^\sigma(0) \le \kappa_0$ and  $0 \le \gamma \le \gamma_0$, then there exists a unique triple $(u,p,\eta)$ that solves \eqref{ns_geometric} on the temporal interval $(0,\infty)$, achieves the initial data, and obeys the estimate
\begin{equation}
 \sup_{0 \le t \le \infty} e^{\lambda t} \se_{2}^\sigma(t) + \int_0^\infty\sd_{2}^\sigma(t) dt \ls \cs \se_{2}^\sigma(0)
\end{equation}
for a constant $\lambda = \lambda(\sigma,\gamma) >0$, where $\se_{2}^\sigma$ and $\sd_{2}^\sigma$ are as defined in \eqref{E_def} and \eqref{D_def}.
\end{thm}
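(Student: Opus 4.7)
The plan is to combine the local existence theorem (Theorem \ref{lwp}) with a global-in-time a priori estimate of the form
\begin{equation*}
\frac{d}{dt}\se_2^\sigma(t) + \sd_2^\sigma(t) \le 0,
\end{equation*}
valid as long as $\se_2^\sigma$ stays small and $0 \le \gamma \le \gamma_0(\sigma)$ is sufficiently small. Standard continuation then gives a global solution, and because surface tension $\sigma>0$ is fixed, we can compare $\se_2^\sigma \le \cs \sd_2^\sigma$ and run Gr\"onwall to get exponential decay. First I would set up the bootstrap: fix an a priori assumption that $\sup_{[0,T]}\se_2^\sigma \le \delta$ for a small $\delta=\delta(\sigma)$ to be determined, and a corresponding bound on $\k$ (which follows from $\se_2^\sigma$ by Sobolev embedding since $2n=4 > 5/2$ in the appropriate norms).

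Next, I would derive a hierarchy of energy-dissipation identities generalizing \eqref{nat_en_diss}. For each space-time multi-index $\alpha \in \N^{1+2}$ with $|\alpha|\le 2n=4$, apply $\p^\alpha$ (acting in $t$ and in the two horizontal variables, to preserve the boundary conditions and the divergence structure) to \eqref{ns_geometric} and test the resulting momentum equation against $J \p^\alpha u$. After standard commutator manipulations and use of the kinematic and stress boundary conditions together with the surface tension term, this yields
\begin{equation*}
\frac{d}{dt}\left(\int_\Omega \tfrac{1}{2}J|\p^\alpha u|^2 + \int_\Sigma \tfrac{1}{2}|\p^\alpha \eta|^2 + \tfrac{\sigma}{2}|\nab \p^\alpha \eta|^2 \right) + \int_\Omega \tfrac12 |\sg_\a \p^\alpha u|^2 = \mathcal{R}_\alpha,
\end{equation*}
where $\mathcal{R}_\alpha$ collects shear-driven linear terms (scaling with $\gamma$) and genuinely nonlinear commutator terms (scaling with $\sqrt{\se_2^\sigma}\,\sd_2^\sigma$). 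The shear terms are exactly those appearing on the right of \eqref{nat_en_diss} at higher order; by inspection they are bounded by $\gamma \se_2^\sigma$ up to borrowing a small fraction of $\sdb_2$, using Korn and trace inequalities on $\Sigma$. Summing over $|\alpha|\le 4$ controls $\seb_2^\sigma$ and $\sdb_2$.

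To upgrade from the basic to the full functionals $\se_2^\sigma$ and $\sd_2^\sigma$, I would use elliptic regularity for the $\a$-Stokes problem: viewing \eqref{ns_geometric} at fixed time $t$ as
\begin{equation*}
-\diva S_\a(p,u) = F_u, \quad \diva u = F_d, \quad S_\a(p,u)\n - \sigma \mathfrak{H}(\eta)\n = F_\eta \text{ on }\Sigma, \quad u=0 \text{ on }\Sigma_b,
\end{equation*}
and applying $\dt^j$ for $j=0,\ldots,n$ lets one trade temporal and tangential control for normal-derivative control of $u,p$ in the expected Sobolev scales, exactly as in \cite{guo_tice_per,tan_wang,jang_tice_wang_gwp}. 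The surface equation then gives $\eta$ regularity: the dynamic stress condition, combined with the $\sigma\mathfrak{H}(\eta)$ term, provides the elliptic gain of two derivatives (via the linearized mean-curvature operator $-\sigma\Delta\eta + \eta$) needed to control $\sigma^2 \ns{\eta}_{2n+3/2}$ and $\sigma \ns{\eta}_{2n+1}$ in $\sd_2^\sigma$ from the stress trace of $u$, again absorbing nonlinearity by smallness of $\se_2^\sigma$. Collecting these estimates yields
\begin{equation*}
\frac{d}{dt}\se_2^\sigma + \sd_2^\sigma \le C\left(\gamma + \sqrt{\se_2^\sigma}\right)\sd_2^\sigma + C\gamma \se_2^\sigma.
\end{equation*}

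Finally, I would close the argument. Choosing $\gamma_0(\sigma)$ and $\kappa_0(\sigma)$ small enough absorbs the $C(\gamma+\sqrt{\se_2^\sigma})\sd_2^\sigma$ term into the left side. For the remaining $C\gamma \se_2^\sigma$, the key observation is that for fixed $\sigma>0$ the full energy is dominated by the full dissipation: every term in \eqref{E_def} appears (up to $\sigma$-dependent factors) in \eqref{D_def} at equal or higher Sobolev index, so $\se_2^\sigma \le \cs \sd_2^\sigma$. Hence after taking $\gamma_0$ smaller if necessary,
\begin{equation*}
\frac{d}{dt}\se_2^\sigma + \tfrac{1}{2}\sd_2^\sigma \le 0, \quad \text{so} \quad \frac{d}{dt}\se_2^\sigma + \lambda \se_2^\sigma \le 0 \text{ with }\lambda = \lambda(\sigma,\gamma) = \tfrac{1}{2\cs}.
\end{equation*}
Gr\"onwall yields $\se_2^\sigma(t) \le \cs \se_2^\sigma(0) e^{-\lambda t}$, and integrating the differential inequality also gives $\int_0^\infty \sd_2^\sigma \le \cs \se_2^\sigma(0)$. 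The bootstrap assumption is recovered provided $\cs \kappa_0 \le \delta$, which is achievable. Standard continuation from Theorem \ref{lwp} produces the global unique solution, completing the proof. The main obstacle I anticipate is the book-keeping for the highest-order terms in the hierarchy: the top-order tangential estimate $|\alpha|=4$ cannot close through elliptic regularity alone (there is no spare derivative for $\dt^{n+1}u$ or $\dt^{n+1}\eta$), so one must use the dual norm bound on $\dt^{n+1}u$ provided by Theorem \ref{lwp} together with the parabolic counting of $\se_2^\sigma$ to keep the top-order commutators controlled by the right-hand side above, rather than by higher-regularity quantities unavailable to us.
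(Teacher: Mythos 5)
Your proposal is essentially the same as the paper's proof: horizontal energy-dissipation estimates for $\p^\alpha$ with $|\alpha|\le 4$, elliptic upgrades via the Stokes and capillary problems to recover the full $\se_2^\sigma$ and $\sd_2^\sigma$, absorption via smallness of $\gamma$ and $\se_2^\sigma$, and then the coercivity $\se_2^\sigma \ls \cs\sd_2^\sigma$ available for fixed $\sigma>0$ to run Gr\"onwall and get exponential decay, followed by the usual continuation. The one place where your bookkeeping differs from the paper's is the highest temporal derivative of pressure: the paper does not invoke the dual-norm control of $\dt^{n+1}u$ from Theorem~\ref{lwp} as you suggest; instead it splits off the problematic term $\int_\Omega \dt^n p\, J F^{2,n}$ by an integration by parts in time, producing the auxiliary functional $\H_2 = \int_\Omega -\dt p\, F^{2,2}J + \tfrac12|\dt^2 u|^2(J-1)$ (Theorem~\ref{time_derivative_est}), so the quantity that is actually differentiated in time and controlled by Gr\"onwall is $\seb_2^\sigma + \H_2$ rather than $\se_2^\sigma$ itself; the comparison $\seb_2^\sigma + \H_2 \le \se_2^\sigma \ls \cs(\seb_2^\sigma + \H_2)$ (valid in the small-energy regime by Proposition~\ref{Hn_estimate}) then converts the result back to $\se_2^\sigma$. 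Your alternative via the $(\mathcal{X}_T)^\ast$ bound would plausibly work but is not what the paper does, and you would still need some mechanism to avoid the undefined quantity $\dt^n p$ directly; the $\H_n$ trick is the cleaner route here.
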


Note that in this theorem the condition on $\f_{2}(0)$ used in the local theory has been removed.  This is possible because when $\sigma >0$ we have that $\f_{n}$ can be controlled by $\se_{n}^\sigma$.  See Proposition \ref{kf_ests_st} for a more precise statement.  Theorem \ref{gwp_st} theorem can be interpreted as saying that the trivial equilibrium $u=0$, $p=0$, $\eta=0$ is asymptotically stable for the problem \eqref{ns_geometric}.  The regularity of solutions in \eqref{gwp_st} is sufficiently high that we can change back to the Eulerian coordinates to produce global-in-time decaying solutions to \eqref{ns_euler} as well.  Thus we find that the steady shear solution \eqref{shear_soln} is asymptotically stable with an exponential rate of decay to equilibrium.

\begin{remark}
 Setting $\gamma =0$ in Theorem \ref{gwp_st} corresponds to the global well-posedness and exponential decay of solutions for the periodic viscous surface wave problem with surface tension.  This result was first established by Nishida-Teramoto-Yoshihara \cite{nis_ter_yos} using different techniques.  
\end{remark}

Theorem \ref{gwp_st} requires a fixed positive value of surface tension.  Our next main result considers the cases $\sigma =0$ and $\sigma$ small but positive.  We view the latter as the ``vanishing surface tension'' regime, as we will employ it to establish this limit.  In these cases we work in a more complicated functional setting that changes depending on whether $\sigma$ vanishes or not.  We introduce this with the following functional, defined for any integer $N \ge 3$ and time $t \in [0,\infty]$:
\begin{equation}\label{G_def}
 \g_{2N}^\sigma(t) = \sup_{0 \le r \le t} \se_{2N}^\sigma(r) + \int_0^t \sd_{2N}^\sigma(r)dr + \sup_{0 \le r \le t} (1+r)^{4N-8} \se_{N+2}^\sigma(r) + \sup_{0 \le r \le t} \frac{\f_{2N}(r)}{1+r}
\end{equation}
where here $\se_{n}^\sigma$, $\sd_{n}^\sigma$, and $\f_{n}$ are as defined by \eqref{E_def}, \eqref{D_def}, and \eqref{transport_def}, respectively.  Note that the condition $N \ge 3$ implies that $2N > N+2$ and that $4N-8 >0$.

We can now state our second main result.

\begin{thm}\label{gwp_vanish}
Let $N \ge 3$ and define $\g_{2N}^\sigma$ via \eqref{G_def}.  Suppose that the initial data $(u_0,\eta_0)$ satisfy $\se_{2N}^\sigma(0) + \f_{2N}(0) < \infty$ as well as the compatibility conditions of Theorem \ref{lwp}.  There exist universal constants $\gamma_0,\kappa_0 \in (0,1)$ such that if $\se_{2N}^\sigma(0) + \f_{2N}(0) \le \kappa_0$, $0 \le \gamma \le \gamma_0$, and $0 \le \sigma \le 1$, then there exists a unique triple $(u,p,\eta)$ that solves \eqref{ns_geometric} on the temporal interval $(0,\infty)$, achieves the initial data, and obeys the estimate
\begin{equation}\label{gwp_vanish_0}
 \g_{2N}^\sigma(\infty) \ls \se_{2N}^\sigma(0) + \f_{2N}(0).
\end{equation}
\end{thm}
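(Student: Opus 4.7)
The plan is a continuity argument on the composite functional $\g_{2N}^\sigma$. By Theorem \ref{lwp} we have a unique local solution on some $[0,T_\ast]$, so it suffices to produce an a priori estimate of the form $\g_{2N}^\sigma(T) \ls \se_{2N}^\sigma(0) + \f_{2N}(0)$, with constants universal in $\sigma \in [0,1]$ and $\gamma \in [0,\gamma_0]$, valid whenever a smooth solution on $[0,T]$ satisfies a smallness condition $\g_{2N}^\sigma(T) \le \delta$ for some universal $\delta > 0$. For $\kappa_0$ and $\gamma_0$ small enough this strictly improves the smallness hypothesis, and combined with Theorem \ref{lwp} iterated in time yields a global solution obeying \eqref{gwp_vanish_0}.

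The engine of the a priori estimate is a two-tier energy-dissipation scheme in the spirit of Guo-Tice, applied to the geometric formulation \eqref{ns_geometric}. At both the high tier ($n = 2N$) and the low tier ($n = N+2$), I apply $\p^\alpha$ with $\abs{\alpha} \le 2n$, test against $\p^\alpha u$, and assemble the boundary and pressure terms via the stress condition and the transport equation for $\eta$, extending the natural identity \eqref{nat_en_diss} to a differential inequality
\begin{equation*}
 \tfrac{d}{dt}\seb_{n}^\sigma + \sdb_{n} \ls \sqrt{\se_{2N}^\sigma}\,\sd_{n}^\sigma + \text{(shear remainders)}.
\end{equation*}
A Stokes-regularity analysis for the elliptic problem with boundary data $(\eta - \sigma \mathfrak{H}(\eta))I - \gamma \eta M$ on $\Sigma$ then promotes $\seb_{n}^\sigma + \sdb_{n}$ to control of the full $\se_{n}^\sigma + \sd_{n}^\sigma$ modulo lower-order nonlinearities. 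Every constant appearing in this step must be tracked as $\ks$-type (rather than $\cs$-type) so that the estimates survive the limit $\sigma \to 0$.

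The key structural issue when $\sigma = 0$ is that $\sd_{2N}^\sigma$ loses one derivative of $\eta$ compared to $\se_{2N}^\sigma$, so the high-tier inequality cannot close by itself. To compensate I analyze the kinematic transport equation $\dt \eta + s(\eta)\p_1\eta = u\cdot\n$: since $\norm{s}_\infty \ls \gamma$, a direct commutator estimate yields $\sqrt{\f_{2N}(t)} \ls \sqrt{\f_{2N}(0)} + \int_0^t \sqrt{\se_{2N}^\sigma(r)}\,dr$, which is the linear-in-$t$ bound encoded in $\g_{2N}^\sigma$. At the low tier, an interpolation of the form $\se_{N+2}^\sigma \ls (\sd_{N+2}^\sigma)^{1-\theta}(\f_{2N})^{\theta}$ converts the energy-dissipation inequality into a weak differential inequality whose solution, combined with the linear growth of $\f_{2N}$, produces the algebraic decay $(1+t)^{-(4N-8)}\se_{N+2}^\sigma$ appearing in $\g_{2N}^\sigma$. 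Feeding this decay back into the high-tier estimate closes the bootstrap.

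The main obstacle is making the scheme simultaneously uniform in $\sigma \in [0,1]$ and in $\gamma \in [0,\gamma_0]$. Uniformity in $\sigma$ forbids absorbing any $\eta$-derivative using the surface-tension energy $\sigma \ns{\nab\p^\alpha \eta}_0$ unless it is weighted by a compensating factor of $\sigma$; in particular, commutator terms involving $\mathfrak{H}(\eta)$ must be carefully split into a universal piece and a $\sigma$-dependent piece controlled by $\sigma^2 \ns{\eta}_{2N+1}$. Uniformity in $\gamma$ is delicate for the new shear contributions in \eqref{ns_geometric}: the interior term $u\cdot \naba(U\circ\Phi)$ produces $O(\gamma)\ns{\p_3 u}_0$ contributions absorbable by Korn-Poincar\'e only when $\gamma$ is small, and the boundary stress perturbation $\gamma \eta M \n$ must be handled by trace interpolation between $\sdb_{2N}$ and $\f_{2N}$, since in the absence of surface tension it cannot be absorbed into the energy. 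Once these two absorptions are executed, the remaining nonlinear error analysis proceeds in parallel with \cite{guo_tice_per}, and the theorem follows.
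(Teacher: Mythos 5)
Your overall architecture — a continuity argument riding on a two-tier Guo--Tice energy scheme, with the transport equation supplying the missing half derivative of $\eta$ and an interpolation inequality converting the low-tier energy--dissipation bound into algebraic decay — matches the paper's proof, which rests on Theorem \ref{apriori_vanish} (the synthesis of Theorems \ref{g_est_top} and \ref{g_est_btm}).  However, there are two substantive errors that would break the bootstrap as you've stated it, and one omission worth flagging.

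The most serious issue is the transport estimate.  You claim $\sqrt{\f_{2N}(t)} \ls \sqrt{\f_{2N}(0)} + \int_0^t \sqrt{\se_{2N}^\sigma(r)}\,dr$.  But the source term for the transport equation $\dt\eta + w\cdot D\eta = u_3$ is (after applying $\pa$ with $\abs\alpha = 4N$) essentially $\pa u_3$ on $\Sigma$, and controlling $\norm{\pa u_3}_{H^{1/2}(\Sigma)}$ requires $\norm{u_3}_{4N+1}$, i.e.\ the \emph{dissipation} $\sd_{2N}^\sigma$, not the energy $\se_{2N}^\sigma$ (which only gives $u \in H^{4N}$, hence trace in $H^{4N-1/2}(\Sigma)$ — half a derivative short).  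The paper's Theorem \ref{f_bound} gives $\f_{2N}(t) \ls \f_{2N}(0) + t\int_0^t \sd_{2N}^0\,dr$, and it is precisely the time-integrability of $\sd_{2N}^0$ (built into $\g_{2N}^\sigma$) that yields linear growth.  Your version, since $\se_{2N}^\sigma$ is merely bounded and not decaying, integrates to $\int_0^t\sqrt{\se_{2N}^\sigma}\,dr \sim \sqrt\delta\, t$, giving $\f_{2N}(t) \ls \f_{2N}(0) + \delta t^2$: quadratic growth, which destroys the factor $\sup (1+r)^{-1}\f_{2N}(r)$ in the definition of $\g_{2N}^\sigma$ and hence cannot close the scheme.

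Secondly, your interpolation $\se_{N+2}^\sigma \ls (\sd_{N+2}^\sigma)^{1-\theta}(\f_{2N})^\theta$ is against $\f_{2N}$, which grows in time.  The paper instead interpolates against the bounded high-tier energy, $\se_{N+2}^\sigma \ls (\sd_{N+2}^\sigma)^\theta(\se_{2N}^\sigma)^{1-\theta}$ with $\theta = (4N-8)/(4N-7)$, precisely so that the resulting differential inequality $\frac{d}{dt}(\seb_{N+2}^\sigma + \mathcal{H}_{N+2}) + c(\seb_{N+2}^\sigma + \mathcal{H}_{N+2})^{1+1/(4N-8)} \le 0$ has a time-independent coefficient and integrates to the clean algebraic rate $(1+t)^{-(4N-8)}$.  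Interpolating against a time-growing quantity complicates the ODE analysis and risks a weaker rate that cannot feed back into the (already broken) transport estimate.

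Finally, you omit the auxiliary functional $\mathcal{H}_n = \int_\Omega -\dt^{n-1}p\,F^{2,n}J + \tfrac12\abs{\dt^n u}^2(J-1)$ from \eqref{Hn_def}.  This correction is essential at the highest temporal order: $\dt^n p$ is not controlled by the scheme, so the paper runs the energy evolution for $\dt^n$ in geometric form (Theorem \ref{time_derivative_est}) and absorbs the bad pressure pairing into $\frac{d}{dt}\mathcal{H}_n$.  Without it the energy-dissipation inequality does not close even formally.

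Your discussion of how the $\gamma$-terms are absorbed (by smallness of $\gamma$ into $\gamma\sd_n^0$, not by trace interpolation against $\f_{2N}$) is also slightly off from what the paper does, but this is more a difference in bookkeeping than a gap.
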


In particular, the bound \eqref{gwp_vanish_0} establishes the decay estimate 
\begin{equation}
 \se_{N+2}^\sigma(t) \ls \frac{\se_{2N}^\sigma(0) + \f_{2N}(0)}{(1+t)^{4N-8}}.
\end{equation}
This is an algebraic decay rate, slower than the exponential rate proved in Theorem \ref{gwp_st}  with a fixed $\sigma >0$.  Two remarks about this are in order.  First, by choosing $N$ larger, we arrive a faster rate of decay. In fact, by taking $N$ to be arbitrarily large we can achieve arbitrarily fast algebraic decay rates, which is what is known as ``almost exponential decay.''  Of course, the trade-off in the theorem is that faster decay requires smaller data in higher regularity classes.  The second point is that when $0 < \sigma \le 1$ in the theorem, it is still possible to prove that $\se_{2N}^\sigma$ decays exponentially by modifying the arguments used later in Theorem \ref{apriori_st}.  We neglect to state this properly here because we only care about the vanishing surface tension limit, and in this case we cannot get uniform control of the exponential decay parameter $\lambda(\sigma,\gamma)$ from Theorem \ref{gwp_st}. 

Theorem \ref{gwp_vanish} also guarantees enough regularity to switch back to Eulerian coordinates.  Consequently, the theorem tells us that the shear solution \eqref{shear_soln} remains asymptotically stable without surface tension, but that the rate of decay to equilibrium is slower.

Our third result establishes the vanishing surface tension limit for the problem \eqref{ns_euler}.

\begin{thm}\label{vanishing_st}
Let $N \ge 3$ and consider a decreasing sequence $\{\sigma_m\}_{m=0}^\infty \subset (0,1)$ such that $\sigma_m \to 0$ as $m \to \infty$.  Let $\kappa_0,\gamma_0 \in (0,1)$ be as in Theorem \ref{gwp_vanish}, and assume that $0 \le \gamma \le \gamma_0$.  Suppose that for each $m \in \mathbb{N}$ we have initial data $(u_0^{(m)},\eta_0^{(m)})$ satisfying $\se_{2N}^{\sigma_m}(0) + \f_{2N}(0) < \kappa_0$ as well as the compatibility conditions of Theorem \ref{lwp}.  Let $(u^{(m)}, p^{(m)},\eta^{(m)})$ be the global solutions to \eqref{ns_geometric} associated to the data given by Theorem \ref{gwp_vanish}.  Further assume that 
\begin{equation}
u_0^{(m)}  \to u_0 \text{ in }H^{4N}(\Omega),  \;
 \eta_0^{(m)} \to \eta_0 \text{ in } H^{4N+1/2}(\Sigma),  \text{ and } \sqrt{\sigma_m} \nab \eta_0^{(m)} \to 0 \text{ in }H^{4N}(\Sigma)
\end{equation}
as $m \to \infty$.

Then the following hold.
\begin{enumerate}
 \item The pair $(u_0,\eta_0)$ satisfy the compatibility conditions of Theorem \ref{lwp} with $\sigma =0$.
 \item As $m \to \infty$, the triple  $(u^{(m)}, p^{(m)},\eta^{(m)})$ converges to  $(u, p,\eta)$, where the latter triple is the unique solution to \eqref{ns_geometric} with $\sigma=0$ and initial data $(u_0,\eta_0)$.  The convergence occurs in any space into which the space of triples $(u,p, \eta)$ obeying $\mathcal{G}^0_{2N}(\infty) < \infty$ compactly embeds.
\end{enumerate}

\end{thm}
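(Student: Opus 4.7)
The plan is a compactness-and-uniqueness argument driven by the uniform control in Theorem \ref{gwp_vanish}. By assumption $\se_{2N}^{\sigma_m}(0) + \f_{2N}(0) < \kappa_0$ uniformly in $m$, so Theorem \ref{gwp_vanish} produces global solutions $(u^{(m)}, p^{(m)}, \eta^{(m)})$ of \eqref{ns_geometric} with surface tension $\sigma_m$, obeying the uniform estimate
\begin{equation*}
\g_{2N}^{\sigma_m}(\infty) \lesssim \se_{2N}^{\sigma_m}(0) + \f_{2N}(0) \lesssim \kappa_0.
\end{equation*}
This uniformity, together with the convergence hypotheses on the initial data, is the engine of the entire argument.

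For item (1), the compatibility conditions for \eqref{ns_geometric} are algebraic identities involving $u_0$, $\eta_0$, their spatial derivatives, and the reconstructed time derivatives $\dt^j u(\cdot,0)$, $\dt^j \eta(\cdot,0)$, obtained by successively applying the bulk equations, the kinematic boundary condition, and the stress balance at $t=0$. The $\sigma$-dependence enters only through the curvature term $\sigma \mathfrak{H}(\eta)$ and its time-differentiated analogues appearing in the stress boundary condition. The hypothesis $\sqrt{\sigma_m}\nabla \eta_0^{(m)} \to 0$ in $H^{4N}(\Sigma)$, combined with the strong convergence of $u_0^{(m)}$ and $\eta_0^{(m)}$ in the top Sobolev norms, is precisely what kills these $\sigma_m$-weighted contributions in the appropriate trace spaces as $m \to \infty$, leaving the $\sigma=0$ compatibility conditions for $(u_0,\eta_0)$.

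For item (2), the uniform bound on $\g_{2N}^{\sigma_m}(\infty)$ gives, along a subsequence not relabeled, weak-$\ast$ convergence of $(u^{(m)}, p^{(m)}, \eta^{(m)})$ to some triple $(u,p,\eta)$ in the topologies encoded by $\g_{2N}^0$. The joint spatial and temporal regularity controlled by $\se_{2N}^{\sigma_m}$ together with $\sd_{2N}^{\sigma_m}$ provides compactness in time via Aubin-Lions type arguments on each finite interval $[0,T]$, upgrading weak-$\ast$ convergence to strong convergence in any space into which the $\g_{2N}^0$-class compactly embeds. This strong convergence is sufficient to pass to the limit in every nonlinear term of \eqref{ns_geometric}: the bulk and boundary expressions are polynomial in the solution, its derivatives, and the geometric quantities $\a$, $\n$, $s(\eta)$, $M$, which are continuous in the strong topology; meanwhile $\sigma_m \mathfrak{H}(\eta^{(m)}) \to 0$ because $\mathfrak{H}(\eta^{(m)})$ is uniformly bounded while $\sigma_m \to 0$. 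Hence $(u,p,\eta)$ is a global solution of \eqref{ns_geometric} with $\sigma=0$ attaining the data $(u_0,\eta_0)$. Theorem \ref{gwp_vanish} applied with $\sigma=0$ yields a solution in this class, and the uniqueness built into that theorem forces the limit to coincide with it; this uniqueness also upgrades subsequential convergence to convergence of the full sequence.

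The principal obstacle is item (1): one must carefully track how each time derivative reconstructed at $t=0$ inherits $\sigma_m$-dependence through the stress balance and its successive time derivatives, and then verify that every such contribution is annihilated in the relevant trace norm by the single hypothesis $\sqrt{\sigma_m}\nabla \eta_0^{(m)} \to 0$ in $H^{4N}(\Sigma)$. Once this bookkeeping is done, the compactness-uniqueness scheme in Step 2 is essentially routine given the uniform estimates from Theorem \ref{gwp_vanish}.
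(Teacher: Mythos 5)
Your proposal is correct and takes essentially the same approach as the paper. The paper's own proof is a single sentence invoking ``the estimates of Theorem \ref{gwp_vanish} and standard compactness arguments,'' citing \cite{tan_wang} and \cite{jang_tice_wang_gwp} for the details; your compactness--uniqueness scheme driven by the uniform bound on $\g_{2N}^{\sigma_m}(\infty)$, together with the observation that the data hypotheses annihilate the $\sigma_m$-weighted terms in the compatibility conditions and the boundary stress balance, is exactly that standard argument written out in outline.
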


\begin{remark}
Theorem \ref{vanishing_st} is modeled on similar results proved by Tan-Wang \cite{tan_wang} for the incompressible viscous surface wave problem.  Indeed, if we set $\gamma=0$ we recover their result.  A similar result for the compressible viscous surface-internal wave problem was established by Jang-Tice-Wang in \cite{jang_tice_wang_gwp}. 
\end{remark}

\subsection{Discussion and plan of paper}

The main focus of the paper is to establish a priori estimates for solutions to \eqref{ns_geometric}.  Indeed, once these are developed, the global existence and vanishing surface tension limit results of Theorems \ref{gwp_st}, \ref{gwp_vanish}, and \ref{vanishing_st} follow from a standard coupling to the local existence theory of Theorem \ref{lwp}.  We develop the scheme of a priori estimates by using a variant of the nonlinear energy method we employed with Guo \cite{guo_tice_per} to study the periodic viscous surface wave problem without surface tension.  The main steps of this scheme are as follows.

\textbf{Horizontal energy estimates: }

The main workhorse in our analysis is the energy-dissipation equation \eqref{nat_en_diss} and its linearized counterparts.  The natural physical energy and dissipation in \eqref{nat_en_diss} do no provide nearly enough control to close a scheme of a priori estimates, so we are forced to seek control of higher order derivatives by applying derivatives to the equation and again appealing to the energy-dissipation equation.  For this to work the differential operators we apply must be compatible with the boundary conditions in \eqref{ns_geometric}, and this restricts us ``horizontal'' derivatives of the form $\dt^{\alpha_0} \p_1^{\alpha_1} \p_2^{\alpha_2}$ for $\alpha \in \mathbb{N}^{1+2}$.

It turns out to be convenient to do these estimates in two forms, one for the equations written in ``geometric'' form as in \eqref{linear_geometric}, and the second written in ``flattened form'' as in \eqref{linear_flattened}.  The geometric form is better suited for analysis of the highest-order temporal derivatives, as it circumvents the problem of estimating the highest time derivatives of the pressure, which are not controlled in our scheme of estimates.  The flattened form works well for other derivatives and is convenient to use due to its compatibility with related elliptic estimates and due to the somewhat simpler, constant coefficient, form.  We develop these forms of the energy-dissipation equation in Section \ref{sec_energy_evolution}.

Summing the various energy-dissipation equations over an appropriate range of derivatives provides us with an equation roughly of the form
\begin{equation}\label{discussion_1}
 \frac{d}{dt} \seb_{n}^\sigma + \sdb_{n} = \gamma \mathcal{J}_n + \mathcal{I}_{n}^\sigma
\end{equation}
where $\mathcal{I}_n^\sigma$ is a cubic (or higher) interaction energy and $\mathcal{J}_n$ is a quadratic term generated by the shear flow background.  The form of $\mathcal{I}_n^\sigma$ grows more complicated with $n$  due to the fact that as we apply more derivatives to \eqref{ns_geometric} we introduce more commutators.  The more precise statement of \eqref{discussion_1} and its proof can be found in the first part of Section \ref{sec_apriori}.

It is important to note that the interaction term $\mathcal{I}_n^\sigma$ involves more differential operators than are controlled by either $\seb_{n}^\sigma$ or $\sdb_{n}$, so a nonlinear energy method based solely on the horizontal terms is impossible.  We are thus compelled to appeal to auxiliary estimates in order to gain control of more terms.

\textbf{Energy and dissipation enhancement: }

The next step in the nonlinear energy method is to employ various auxiliary estimates in order to gain control of more quantities in terms of those already controlled by $\seb_{n}^\sigma$ and $\sdb_{n}^\sigma$.  In other words, we seek to prove (again, roughly) that we have the comparison estimates
\begin{equation}\label{discussion_2}
 \se_{n}^\sigma \ls \seb_{n}^\sigma \le \se_{n}^\sigma \text{ and }  \sd_{n}^\sigma \ls \sdb_{n} \le \sd_{n}^\sigma.
\end{equation}
The main mechanisms for proving \eqref{discussion_2} are elliptic regularity for the Stokes problem and elliptic regularity for the capillary problem, both of which are recorded in Appendix \ref{app_elliptics}.  The elliptic estimates are coupled with delicate iteration arguments and a careful exploitation of the structure of the equations in \eqref{ns_geometric} in order to prove that \eqref{discussion_2} holds up to some error terms.  The proof of this is carried out in the latter part of Section \ref{sec_apriori}.

\textbf{Nonlinear estimates:}

In Section \ref{sec_nlins} we record the estimates of the various nonlinear terms that appear in the energy-dissipation, elliptic, and auxiliary estimates employed in the analysis.  A good portion of the nonlinearities can be handled in the usual way with a combination of product estimates, embeddings, and trace estimates.  However, a few of the nonlinearities present key challenges and must be treated delicately in order to arrive at a useful estimate.  

The nonlinear energy method requires more than just control of the nonlinearities: it requires structured control.  The rough idea here is that the nonlinear terms must be able to be absorbed by the dissipation functional in a small energy context.  For example, it is not enough to bound the term $\mathcal{I}_n^\sigma$ mentioned above via $\abs{\mathcal{I}_n^\sigma} \ls (\sd_{n}^\sigma)^r$ for some $r >1$.  We must instead have an estimate that is structured in a way compatible with absorption, i.e. one of the form 
\begin{equation}\label{discussion_3.1}
\abs{\mathcal{I}_n^\sigma} \ls (\se_n^\sigma)^{r} \sd_{n}^\sigma 
\end{equation}
for some $r>0$.  With such an estimate in hand we can work in a small-energy context, i.e. in the context of $\se_{n}^\sigma \ll 1$, in order to view $\abs{\mathcal{I}_n^\sigma}$ as a small multiple of the dissipation.

The shear interaction term $\mathcal{J}_n$ is less delicate in terms of its structure, but it is only quadratic.  Consequently, the best we can hope to prove is that 
\begin{equation}\label{discussion_3.2}
 \abs{\mathcal{J}_n }\ls \sd_{n}^\sigma,
\end{equation}
and with this in hand we can use the smallness of $\gamma$ to absorb the product $\gamma \mathcal{J}_n$.  Fortunately,  \eqref{discussion_3.2} holds and so this strategy is feasible.

\textbf{A priori estimates with surface tension:}

The previous three components combine to form a closed system of a priori estimates in the case $\sigma >0$.  Indeed, by combining \eqref{discussion_1}, the dissipation comparison in \eqref{discussion_2},  \eqref{discussion_3.1}, and \eqref{discussion_3.2} we roughly have that
\begin{equation}
 \frac{d}{dt} \seb_{n}^\sigma + \sd_{n}^\sigma \ls \gamma \sd_n^\sigma + (\se_n^\sigma)^r \sd_n^\sigma,
\end{equation}
and so if and $\gamma \ll 1$ and $\se_n^\sigma \ll 1$ on some temporal interval $[0,T]$ then we can deduce that 
\begin{equation}\label{discussion_4}
 \frac{d}{dt} \seb_{n}^\sigma + \hal \sd_{n}^\sigma \le 0.
\end{equation}
When $\sigma >0$ it is not hard to verify that the dissipation is coercive over the energy, i.e. $\se_{n}^\sigma \ls \sd_{n}^\sigma$, and so \eqref{discussion_4} and the energy comparison in \eqref{discussion_2} can be combined to show that
\begin{equation}\label{discussion_5}
 \sup_{0 \le t \le T} e^{\lambda t} \se_{n}^\sigma(t) + \int_0^T \sd_{n}^\sigma(t) dt \ls \se_{n}^\sigma(0)
\end{equation}
for some $\lambda >0$.  This shows that under a universal smallness condition on $\gamma$ and $\se_{n}^\sigma$ (which can be verified by the local theory) we have the stronger a priori estimate \eqref{discussion_5} with bound given in terms of the initial data.  The full details of this argument are found in Section \ref{sec_st_gwp}.

\textbf{A priori estimates with zero or vanishing surface tension:}

When $\sigma =0$ and in the vanishing surface tension analysis we cannot exploit the regularity gains afforded by the elliptic capillary problem.  This creates two serious problems.  The first is that without this control the dissipation fails to be coercive over the energy precisely due to a half-derivative gap in the estimate for $\eta$.  This means that we can no longer expect exponential decay of solutions.  The second and more severe problem is that the nonlinear estimates require control of $2n+1/2$ derivatives of $\eta$, whereas the energy only controls $2n$ derivatives of $\eta$.  This disparity is potentially disastrous even for the local existence theory, as it suggests derivative loss.  Fortunately, the kinematic transport equation for $\eta$ in \eqref{ns_geometric} provides an alternate way of estimating these derivatives and shows that they are finite.  Unfortunately, the best estimates associated to the transport equation give rise to bounds that grow linearly in time, and this poses serious problems for a nonlinear energy method in which the nonlinearity is supposed to be small in some sense.  

We get around these problems by employing the two-tier nonlinear energy method we developed with Guo in \cite{guo_tice_per} to handle the problem with $\gamma =0$.  The idea is to let $N \ge 3$ and consider together the high-order energy and dissipation $\se_{2N}^0$ and $\sd_{2N}^0$ along with the low-order energy and dissipation $\se_{N+2}^0$ and $\sd_{N+2}^0$.  We also use the functional $\f_{2N}$ defined by \eqref{transport_def} to track the highest derivatives of $\eta$.  

We control $\f_{2N}$ with a transport estimate in the first part of Section \ref{sec_nost_gwp}, but the estimate allows for $\f_{2N}$ to grow linearly in time.  To compensate for this in the nonlinear estimates of Section \ref{sec_nlins} we show that $\f_{2N}$ only appears in products with the very low regularity functional $\k$ defined by \eqref{K_def}.  We have a trivial estimate $\k \ls \se_{N+2}^0$, and so if we know a priori that $\se_{N+2}^0$ decays algebraically at a fast enough rate, then the product $\f_{2N} \k$ can be controlled uniformly in time.  Then, under the assumptions that $\g_{2N}^0 \ll 1$ and  $\gamma \ll 1$ we prove that (again, roughly)
\begin{equation}
 \sup_{0\le t \le T} \se_{2N}^0(t) + \int_0^T \sd_{2N}^0(t) dt \ls \se_{2N}^0(0).
\end{equation}
This means that decay of the low-tier energy allows us to close the high-tier bounds.

Next we use the high-tier bounds to show that the low-tier energy decays algebraically, with bounds in terms of the data.  The key point here is that $\sd_{N+2}^0$ is not coercive over $\se_{N+2}^0$, but it is possible to interpolate with the high-energy bound:
\begin{equation}
 \se_{N+2}^0 \ls (\sd_{N+2}^0)^\theta (\se_{2N}^0)^{1-\theta}
\end{equation}
for some $\theta = \theta(N) \simeq 1$.  This then allows us to prove a bound of the form
\begin{equation}
 \frac{d}{dt} \seb_{N+2} + C (\seb_{N+2})^{1 + 1/r} \le 0
\end{equation}
for some $r= r(N) >0$, and from this we can deduce the decay estimate
\begin{equation}
\se_{N+2}(t) \ls \se_{2N}(0) (1+t)^{-r}.
\end{equation}
This means that boundedness of the high-tier energy allows us to close the low-tier decay bounds.

The second part of Section \ref{sec_nost_gwp} contains the full details of the two-tier method and establishes the global well-posedness and algebraic decay of solutions.  An interesting feature of the two-tier analysis is that the existence of global solutions is predicated on their decay.

\textbf{Vanishing surface tension limit:}

Throughout the paper we take great care to isolate the behavior of constants with respect to $\sigma$.  This is done in order to allow us to send $\sigma \to 0$.  We carry our this analysis in the final part of Section \ref{sec_nost_gwp}.

\section{Evolution of the energy and dissipation}\label{sec_energy_evolution}

In this section we record the energy-dissipation evolution equations for two linearized versions of the problem \eqref{ns_geometric}: the geometric form and the flattened form.  We also record the forms of the nonlinear forcing terms that appear in the analysis of \eqref{ns_geometric}.

\subsection{Geometric form}

We assume that $u$ and $\eta$ are given and that $\Phi,\a,\n, J$, etc. are given in terms of $\eta$ as in \eqref{mapping_def} and \eqref{ABJ_def}.  The linearized geometric form of \eqref{ns_geometric} is then:
\begin{equation}\label{linear_geometric}
\begin{cases}
 \dt v - \dt \bar{\eta} \tilde{b} K \p_3 v + u \cdot \naba v + U \circ \Phi \cdot \naba v + v \cdot \naba (U \circ \Phi) + \diva S_{\a}(q,v) =F^1 & \text{in }\Omega \\
 \diva v =F^2 & \text{in } \Omega \\
 S_\a(q,v) \n = [(\zeta - \sigma \Delta \zeta) I - \gamma \zeta M  ]\n + F^3& \text{on } \Sigma \\
 \dt \zeta - v \cdot \n - s(\eta) \p_1 \zeta = F^4 & \text{on }\Sigma \\
 u = 0 & \text{on } \Sigma_b.
\end{cases}
\end{equation}

The next result records the energy-dissipation equation associated to solutions of \eqref{linear_geometric}.

\begin{prop}\label{lin_evolve_geo}
Let $\eta$ and $u$ be given and satisfy 
\begin{equation}\label{lin_evolve_geo_00}
\begin{cases}
\diva u =0 & \text{in }\Omega \\
\dt \eta + s(\eta) \p_1 \eta = u \cdot \n & \text{on }\Sigma.
\end{cases}
\end{equation}
Suppose that $(v,q,\zeta)$ solve \eqref{linear_geometric}, where $\Phi,\a,J$, etc are determined by $\eta$ as in \eqref{mapping_def} and \eqref{ABJ_def}.  Then 
\begin{multline}\label{lin_evolve_geo_01}
\frac{d}{dt} \left[\int_\Omega \hal \abs{v}^2 J + \int_{\Sigma} \frac{1}{2} \abs{\zeta}^2 + \frac{\sigma}{2} \abs{\nab \zeta}^2   \right] + \int_{\Omega} \hal \abs{\sg_{\a} v}^2 J =  \int_\Omega \gamma \Phi_3 v_3 v_1 J  + \int_\Sigma \gamma \zeta (M \n) \cdot v \\
+ \int_\Omega  J(v \cdot F^1 + q F^2) + \int_\Sigma -F^3 \cdot v +( \zeta - \sigma \Delta \zeta) \left(\frac{\gamma}{2}\eta^2 \p_1 \zeta + F^4  \right)
\end{multline}
\end{prop}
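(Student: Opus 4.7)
The plan is to test the momentum equation in \eqref{linear_geometric} against the weight $Jv$, integrate over $\Omega$, and then reorganize the six resulting terms into the time derivative of the kinetic and capillary energy, the viscous dissipation, and the various right-hand-side forcing terms.  The technical mechanism throughout is Piola's identity $\p_j(J\a_{ij}) = 0$ together with the boundary identification $J\a_{j3}|_\Sigma = \n_j$ (and $v = 0$ on $\Sigma_b$), which converts flattened-coordinate integrals into the geometric objects naturally matching their moving-domain counterparts.

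First, I would treat the four material-derivative terms $\int_\Omega J v \cdot (\dt v - \dt \bar\eta \tilde b K \p_3 v + u \cdot \naba v + U \circ \Phi \cdot \naba v)$.  The goal is to collapse them into $\frac{d}{dt}\int_\Omega \hal |v|^2 J$ by the flattened-coordinate Reynolds transport identity.  This works because $\diva(u + U\circ\Phi) = 0$: the first piece vanishes by hypothesis, while $\diva(U\circ\Phi) = s'(\Phi_3)\a_{1j}\p_j\Phi_3 = 0$ follows from the $(1,3)$ entry of $\a\nab\Phi = I$.  It also works because the free boundary moves with the combined velocity, $\dt\eta = u\cdot\n - s(\eta)\p_1\eta = (u + U)\cdot\n$ on $\Sigma$, which is precisely \eqref{lin_evolve_geo_00} once one notices $U\cdot\n|_\Sigma = -s(\eta)\p_1\eta$.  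Concretely, integrating $\dt J = \p_3(\dt\bar\eta\tilde b)$ by parts in $x_3$ handles $(\dt v - \dt\bar\eta\tilde b K\p_3 v)$ up to a $\Sigma$-boundary residual $-\int_\Sigma \hal \dt\eta |v|^2$, while the Piola-assisted IBP of $\int_\Omega J u \cdot \naba(|v|^2/2)$ and the analogous manipulation of the $U\circ\Phi$ transport term contribute $+\int_\Sigma (u\cdot\n)\hal|v|^2$ and $-\int_\Sigma s(\eta)\p_1\eta\,\hal|v|^2$ respectively; the three residual boundary terms cancel exactly via the kinematic hypothesis.

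Next I would compute the shear-stretching term $\int_\Omega J v \cdot (v \cdot \naba(U\circ\Phi))$.  Since $(U\circ\Phi)_i = \delta_{i1} s(\Phi_3)$ and $\a_{kj}\p_j\Phi_3 = \delta_{k3}$, this collapses to $\int_\Omega J v_1 v_3 s'(\Phi_3) = -\int_\Omega \gamma \Phi_3 v_1 v_3 J$, which yields the first right-hand term after moving to the right.  For the stress-divergence term $\int_\Omega J v \cdot \diva S_\a(q,v)$, I would use Piola to integrate by parts, obtaining $-\int_\Omega J \naba v : S_\a(q,v) + \int_\Sigma v \cdot S_\a(q,v)\n$.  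By symmetry of $S_\a = qI - \sg_\a v$ together with $\diva v = F^2$, the bulk integrand simplifies to $qF^2 - \hal|\sg_\a v|^2$, producing the dissipation $\int_\Omega \hal J|\sg_\a v|^2$ and the pressure-divergence coupling $-\int_\Omega JqF^2$.

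Finally I would handle the $\Sigma$-boundary stress integral.  Inserting the dynamic boundary condition $S_\a(q,v)\n = (\zeta - \sigma\Delta\zeta)\n - \gamma\zeta M\n + F^3$ generates the $F^3$ and $\gamma\zeta M\n\cdot v$ contributions of \eqref{lin_evolve_geo_01}, while the capillary piece $-\int_\Sigma (\zeta - \sigma\Delta\zeta) v\cdot\n$ is processed via the kinematic $v\cdot\n = \dt\zeta - s(\eta)\p_1\zeta - F^4$.  The $\dt\zeta$ contribution, after integrating $\sigma\Delta\zeta\,\dt\zeta$ by parts on the closed surface $\Sigma$, becomes $-\frac{d}{dt}\int_\Sigma(\hal\zeta^2 + \frac{\sigma}{2}|\nab\zeta|^2)$ and is moved to the left, while the $s(\eta)\p_1\zeta$ contribution is simplified using $s(\eta) = \frac{\gamma}{2}(b^2 - \eta^2)$ together with the periodic identities $\int_\Sigma \zeta\p_1\zeta = 0$ and $\int_\Sigma \Delta\zeta\,\p_1\zeta = 0$ (each being an integral of a total $\p_1$-derivative on the torus), leaving the quadratic $(\zeta - \sigma\Delta\zeta)\frac{\gamma}{2}\eta^2 \p_1\zeta$ coupling displayed in \eqref{lin_evolve_geo_01}; the $F^4$ piece appears as stated.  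The main obstacle in all of this is the tight interplay between Piola's identity, the boundary identification $J\a_{j3}|_\Sigma = \n_j$, and the two kinematic identities (for $\eta$ and for $\zeta$), which is what lets the various boundary residuals generated during the bulk IBPs cancel cleanly and lets the bulk and surface shear contributions consolidate into the compact form on the right of \eqref{lin_evolve_geo_01}.
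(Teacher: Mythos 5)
Your proposal is correct and takes essentially the same approach as the paper: test the momentum equation against $Jv$, invoke the Piola identity $\p_k(J\a_{jk})=0$ and the boundary identification $J\a_{j3}|_\Sigma = \n_j$ to convert the bulk integrations by parts into $\Sigma$-residuals, cancel those residuals via the kinematic equations for $\eta$ and $\zeta$, and insert the dynamic boundary condition into the stress surface integral. The only cosmetic difference is that you group all four material-derivative terms under a single transport identity for the combined velocity $u + U\circ\Phi$ (exploiting $\diva(u+U\circ\Phi)=0$ and $\dt\eta = (u+U\circ\Phi)\cdot\n$ at once), whereas the paper treats the shear transport term $U\circ\Phi\cdot\naba v$ separately inside $III$ and cancels the resulting $\int_\Sigma s(\eta)\p_1\eta\,\hal|v|^2$ boundary residual against the matching one from $I_2$ only at the end; the two bookkeepings are equivalent.
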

\begin{proof}
We take the dot product of the first equation in \eqref{linear_geometric} with $v$, multiply by $J$, and integrate over $\Omega$ to see that
 \begin{equation}\label{lin_evolve_geo_1}
 I + II = III
\end{equation}
for 
\begin{equation}
 I = \int_\Omega \dt v \cdot v J   - \dt \bar{\eta} \tilde{b} \p_3 v \cdot v   + (u\cdot \naba v)\cdot v J,
\end{equation}
\begin{equation}
 II = \int_\Omega  \diva S_{\a}(q,v)\cdot vJ, 
\text{ and }
 III = \int_\Omega F^1 \cdot v J - \int_\Omega [U \circ \Phi \cdot \naba v + v \cdot \naba (U \circ \Phi)] \cdot v J  
\end{equation}
In order to integrate by parts these terms we will utilize the geometric identity $\p_k(J \mathcal{A}_{ik})=0$ for each $i$, which is readily verified through direct computation.

To handle the term $I$ we first compute
\begin{equation}
 I = \frac{d}{dt}  \int_\Omega \frac{\abs{v}^2 J}{2} + \int_\Omega -\frac{\abs{v}^2 \dt J}{2} - \dt \bar{\eta} \tilde{b} \p_3 \frac{ \abs{v}^2}{2} +  u_j \p_k \left( J \mathcal{A}_{jk}  \frac{\abs{v}^2}{2} \right):= I_1 + I_2.
\end{equation}
Since $\tilde{b} = 1 + x_3/b$, an integration by parts and an application of the boundary condition $v=0$ on $\Sigma_b$ reveals that
\begin{multline}
I_2 = \int_\Omega -\frac{\abs{v}^2 \dt J}{2} - \dt \bar{\eta} \tilde{b} \p_3 \frac{ \abs{v}^2}{2} +  u_j \p_k \left( J \mathcal{A}_{jk}  \frac{\abs{v}^2}{2} \right) = \int_\Omega -\frac{\abs{v}^2 \dt J}{2} + \frac{\abs{v}^2}{2} \left( \frac{\dt \bar{\eta}}{b} + \tilde{b} \dt \p_3 \bar{\eta} \right) \\
- \int_\Omega J   \frac{\abs{v}^2}{2} \diva u + \hal \int_\Sigma - \dt \eta \abs{v}^2
+   u_j J \mathcal{A}_{jk} e_3\cdot e_k \abs{v}^2.
\end{multline}
Simple calculations show that $\dt J  = \dt \bar{\eta}/b + \tilde{b} \dt \p_3 \bar{\eta}$ in $\Omega$ and that  $J \mathcal{A}_{jk} e_3\cdot e_k = \n_j$ on $\Sigma$.  Combining these with the equations in \eqref{lin_evolve_geo_00} then shows that 
\begin{equation}
 I_2 = \int_\Sigma \frac{\abs{v}^2}{2} s(\eta) \p_1 \eta,
\end{equation}
and hence
\begin{equation}\label{lin_evolve_geo_2}
 I = I_1 + I_2 = \dt  \int_\Omega \frac{\abs{v}^2 J}{2} + \int_\Sigma \frac{\abs{v}^2}{2} s(\eta) \p_1 \eta.
\end{equation}

We begin our analysis of the term $II$ with a similar integration by parts, which reveals that
\begin{equation}
 II =  \int_\Omega -  S_\a(v,q) :\naba v J     + \int_\Sigma J \mathcal{A}_{j3} [S_{\a}(v,q)]_{ij} v_i 
= \int_\Omega -q \diva v J + J\frac{\abs{\sg_\mathcal{A} v}^2}{2} + \int_\Sigma  S_{\a}(v,q) \n \cdot v.
\end{equation}
We rewrite the integral on $\Sigma$ as 
\begin{multline}
\int_\Sigma  S_{\a}(v,q) \n \cdot v = \int_\Sigma (\zeta - \sigma \Delta \zeta) v\cdot \n - \gamma \zeta(M \n) \cdot v + \int_\Sigma F^3 \cdot v \\
= \int_\Sigma ( \zeta - \sigma \Delta \zeta)\left(\dt \zeta +\frac{\gamma}{2} \p_1 \zeta - \frac{\gamma}{2} \eta^2 \p_1 \zeta -F^4 \right) + \int_\Sigma F^3 \cdot v, 
\end{multline}
and then we compute
\begin{equation}
 \int_\Sigma ( \zeta - \sigma \Delta \zeta) \dt \zeta = \frac{d}{dt} \int_\Sigma \frac{1}{2} \abs{\zeta}^2 + \frac{\sigma}{2} \abs{\nab \zeta}^2 
\end{equation}
and 
\begin{equation}
 \int_\Sigma ( \zeta - \sigma \Delta \zeta) \frac{\gamma}{2} \p_1 \zeta =   \int_\Sigma \frac{1}{2}\p_1 \abs{\zeta}^2 + \frac{\sigma}{2} \p_1 \abs{\nab \zeta}^2 =0. 
\end{equation}
Combining these then shows that 
\begin{equation}\label{lin_evolve_geo_3}
 II = \int_\Omega -q \diva v J + J\frac{\abs{\sg_\mathcal{A} v}^2}{2}  + \frac{d}{dt} \int_\Sigma \frac{1}{2} \abs{\zeta}^2 + \frac{\sigma}{2} \abs{\nab \zeta}^2  - \int_\Sigma (\zeta-\sigma \Delta \zeta) \left( \frac{\gamma}{2} \eta^2 \p_1 \zeta +F^4 \right)
\end{equation}

We now rewrite the term $III$.  We begin by writing  
\begin{multline}
\int_\Omega   [U \circ \Phi \cdot \naba v ] \cdot v J = \int_\Omega J (U\circ \Phi)_j \a_{jk} \p_k\frac{\abs{v}^2}{2}   = \int_\Omega  (U\circ \Phi)_j \p_k\left( J\a_{jk} \frac{\abs{v}^2}{2}\right)   \\
= \int_\Omega - \p_k (U \circ \Phi)_i \a_{jk} J \frac{\abs{v}^2}{2} + \int_\Sigma U \circ \Phi \cdot \n \frac{\abs{v}^2}{2} = \int_\Omega - \diva (U\circ \Phi) \frac{\abs{v}^2}{2} J + \int_\Sigma s(\eta) (e_1 \cdot \n) \frac{\abs{v}^2}{2}.
\end{multline}
Note that 
\begin{multline}
 \diva (U\circ \Phi) = \a_{ij} \p_j (U\circ \Phi)_i = \a_{1j} \p_j (s \circ \Phi_3) = \a_{1j} s'\circ \Phi_3 \p_j \Phi_3 = s'\circ \Phi_3 (\nab \Phi)_{3j} \a^T_{j1} \\
 = s'\circ \Phi_3 (\nab \Phi \a^T)_{31} = s'\circ \Phi_3 I_{31} = 0
\end{multline}
and 
\begin{equation}
 -s(\eta) e_1 \cdot \n = s(\eta) \p_1 \eta.
\end{equation}
We may then combine the above to rewrite 
\begin{equation}\label{lin_evolve_geo_4}
 \int_\Omega   [U \circ \Phi \cdot \naba v ] \cdot v J = - \int_\Sigma s(\eta) \p_1 \eta \frac{\abs{v}^2}{2}.
\end{equation}
Next we compute 
\begin{multline}
(v \cdot \naba (U\circ \Phi))_i = v_j a_{jk} \p_k (U\circ \Phi)_i = \delta_{i1} v_j \a_{jk} \p_k (s \circ \Phi_3) = \delta_{i1} s'\circ \Phi_3  v_j \a_{jk} \p_k \Phi_3 \\
= \delta_{i1} s'\circ \Phi_3 v_j (\nab \Phi \a^T)_{3j} = \delta_{i1} s'\circ \Phi_3 v_j \delta_{3j} = \delta_{i1} s'\circ \Phi_3 v_3.
\end{multline}
This allows us to rewrite 
\begin{equation}\label{lin_evolve_geo_5}
 \int_\Omega   [v \cdot \naba (U\circ \Phi) ] \cdot v J = \int_\Omega -\gamma \Phi_3 v_3 v_1 J.
\end{equation}
We then combine \eqref{lin_evolve_geo_4} and \eqref{lin_evolve_geo_5} to see that 
\begin{equation}\label{lin_evolve_geo_6}
 III =     \int_\Omega v \cdot F^1 J + \int_\Sigma s(\eta) \p_1 \eta \frac{\abs{v}^2}{2} + \int_\Omega \gamma \Phi_3 v_3 v_1 J
\end{equation}

To conclude that  \eqref{lin_evolve_geo_01} holds we plug \eqref{lin_evolve_geo_2}, \eqref{lin_evolve_geo_3}, and \eqref{lin_evolve_geo_6} into \eqref{lin_evolve_geo_1}, rearrange, and cancel the term $\int_\Sigma s(\eta) \p_1 \eta \frac{\abs{v}^2}{2}$.

\end{proof}

Next we record the form of the forcing terms that will appear in our analysis.  We arrive at the forcing terms by applying $\dt^j$ to \eqref{ns_geometric}, so we will build the integer $j$ into our notation by writing $F^{i,j}$ for the $i^{th}$ forcing term generated by applying $\dt^j$.  

We have that the first term is $F^{1,j} = \hat{F}^{1,j} + \tilde{F}^{1,j},$ for 
\begin{equation}\label{nlin_F1}
\begin{split}
\hat{F}^{1,r}_{i} &:=  \sum_{0 < \ell \le r} C_{r\ell} \left[ \dt^\ell (\dt \bar{\eta} \tilde{b} K) \dt^{r-\ell} \p_{3} u_{i}  - \dt^\ell(u_{j} \mathcal{A}_{jk}) \dt^{r-\ell} \p_{k} u_{i} + \dt^\ell\mathcal{A}_{ik} \dt^{r-\ell}\p_{k}  p  \right],\\
&+  \sum_{0 < \ell \le r} C_{r\ell} \left[ \dt^\ell \mathcal{A}_{j k} \dt^{r-\ell} \p_{k} ( \mathcal{A}_{im} \p_{m} u_{j} + \mathcal{A}_{jm} \p_{m} u_{i})  + \mathcal{A}_{jk} \p_{k} (\dt^\ell \mathcal{A}_{i m} \dt^{r-\ell} \p_{m} u_{j} + \dt^\ell \mathcal{A}_{j m} \dt^{r-\ell} \p_{m} u_{i}) \right], \\
\tilde{F}^{1,r}_{i} &:= -\sum_{0 < \ell \le r} C_{r\ell} \left[ \dt^\ell(s \circ \Phi_3) \dt^{r-\ell} (\a_{1k} \p_k u_i) + s \circ \Phi_3  \dt^\ell \a_{ik} \dt^{r-\ell} \p_k u_i \right].
\end{split}
\end{equation}
The second is
\begin{equation}\label{nlin_F2}
F^{2,r}   :=    - \sum_{0 < \ell \le r} C_{r\ell} \dt^\ell \mathcal{A}_{ij} \dt^{r-\ell} \p_{j} u_{i}, 
\end{equation}
and the third is $F^{3,r} = \hat{F}^{3,r} + \tilde{F}^{3,r}$, for 
\begin{equation}\label{nlin_F3}
\begin{split}
\hat{F}^{3,r}_{i} &:=  \sum_{0 < \ell \le r} C_{r\ell} \left[\dt^{r-\ell}  (\eta - p) \dt^\ell \mathcal{N}_{i}
  +  \dt^{r-\ell} \left( \mathcal{A}_{ik} \p_{k}u_{j} + \mathcal{A}_{jk} \p_{k} u_{i}  \right) \dt^\ell \mathcal{N}_{j} \right] \\
&+  \sum_{0 < \ell \le r} C_{r\ell} \left[     \left( \dt^\ell \mathcal{A}_{ik} \dt^{r-\ell} \p_{k} u_{j} + \dt^\ell \mathcal{A}_{jk}\dt^{r-\ell} \p_{k} u_{i} \right) \mathcal{N}_{j}  \right],
  \\
\tilde{F}^{3,r}_i  &:= - \sum_{0 < \ell \le r} C_{r\ell} \left[ \dt^{r-\ell} (\sigma \Delta \eta I_{ik} + \gamma \eta M_{ik}) \dt^{\ell}\n_k \right] + \sum_{0 \le \ell \le r} C_{r\ell} [ \dt^\ell(\sigma \Delta \eta - \sigma \mathfrak{H}(\eta)) \dt^{r-\ell} \n_i  ]. 
\end{split}
\end{equation}
The fourth is
\begin{equation}\label{nlin_F4}
 F^{4} := \sum_{0 < \ell \le r} C_{r\ell} \left[\dt^{r-\ell} u \cdot \dt^\ell \n + \frac{\gamma}{2} \dt^\ell (\eta^2) \dt^{r-\ell} \p_1 \eta \right]  .
\end{equation}

\subsection{Flattened form}

It will also be useful for us to have a linearized version of \eqref{ns_geometric} in which the operators have constant coefficients.  This version is as follow:
\begin{equation}\label{linear_flattened}
\begin{cases}
 \dt v  + s \p_1 v + s' v_3 e_1 + \diverge S (q,v) = \Theta^1  & \text{in }\Omega \\
 \diverge v =\Theta^2 & \text{in } \Omega \\
 S(q,v)e_3 = [(\zeta - \sigma \Delta \zeta) I ]e_3 - \gamma \zeta e_1  + \Theta^3& \text{on } \Sigma \\
 \dt \zeta = v_3 - (\gamma/2) \p_1 \zeta + \Theta^4 & \text{on }\Sigma \\
 v = 0 & \text{on } \Sigma_b.
\end{cases}
\end{equation}

The next result records the energy-dissipation equation associated to solutions of \eqref{linear_flattened}.

\begin{prop}\label{lin_evolve_flat}
Suppose that $(v,q,\zeta)$ solve \eqref{linear_flattened}.  Then  
\begin{multline}\label{lin_evolve_flat_00}
\frac{d}{dt} \left[\int_\Omega \hal \abs{v}^2 + \int_{\Sigma} \frac{1}{2} \abs{\zeta}^2 + \frac{\sigma}{2} \abs{\nab \zeta}^2   \right] + \int_\Omega \frac{1}{2} \abs{\sg v}^2 = \int_\Omega \gamma x_3 v_3 v_1 + \int_\Sigma \gamma \zeta v_1 \\
+ \int_\Omega  v \cdot \Theta^1 + q \Theta^2 + \int_\Sigma -\Theta^3 \cdot v +( \zeta - \sigma \Delta \zeta) \Theta^4  
\end{multline}
\end{prop}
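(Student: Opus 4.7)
The plan is to mimic the argument used for Proposition \ref{lin_evolve_geo} but to exploit the fact that \eqref{linear_flattened} has constant coefficients, which eliminates the need for the geometric identity $\p_k(J \a_{ik}) = 0$ and all the associated bookkeeping. Specifically, I would take the dot product of the first equation in \eqref{linear_flattened} with $v$, integrate over $\Omega$, and then handle each of the four terms on the left-hand side separately before rearranging.

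The first term $\int_\Omega \dt v \cdot v$ immediately produces $\tfrac{d}{dt}\int_\Omega \tfrac{1}{2}|v|^2$. For the term $\int_\Omega s\p_1 v \cdot v = \int_\Omega s\p_1(|v|^2/2)$, I would observe that $s = s(x_3)$ depends only on the vertical variable, so integrating by parts in $x_1$ on the horizontally periodic slab gives zero. The linearized shear-flow term $\int_\Omega s'v_3 e_1 \cdot v = \int_\Omega s'(x_3) v_3 v_1$ contributes $-\int_\Omega \gamma x_3 v_3 v_1$ directly from $s'(x_3) = -\gamma x_3$; moving this to the right-hand side yields the first source term in \eqref{lin_evolve_flat_00}.

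The essential work is in the stress term. I would integrate by parts to write
\begin{equation}
\int_\Omega \diverge S(q,v)\cdot v = -\int_\Omega S(q,v):\nab v + \int_{\p\Omega} (S(q,v)\nu)\cdot v,
\end{equation}
use the symmetry identity $\mathbb{D}v : \nab v = \tfrac{1}{2}|\mathbb{D}v|^2$ together with $\diverge v = \Theta^2$ to get $-\int_\Omega S(q,v):\nab v = -\int_\Omega q\Theta^2 + \int_\Omega \tfrac{1}{2}|\mathbb{D}v|^2$, and discard the contribution from $\Sigma_b$ since $v=0$ there. On $\Sigma$ the outward normal is $e_3$, so the stress boundary condition gives
\begin{equation}
\int_\Sigma S(q,v)e_3 \cdot v = \int_\Sigma (\zeta - \sigma \Delta \zeta) v_3 - \gamma \zeta v_1 + \Theta^3 \cdot v,
\end{equation}
where the $-\gamma\zeta v_1$ piece, once moved to the right, produces the second shear source term in \eqref{lin_evolve_flat_00}.

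The last and most delicate step is to convert the $v_3$ trace on $\Sigma$ into a time derivative of the surface energy. Using the kinematic equation $v_3 = \dt \zeta + (\gamma/2)\p_1 \zeta - \Theta^4$, I would split $\int_\Sigma (\zeta - \sigma\Delta\zeta) v_3$ into three pieces. The $\dt \zeta$ piece, after integrating $\sigma\Delta\zeta \,\dt\zeta$ by parts on $\Sigma$, assembles into $\tfrac{d}{dt}\int_\Sigma \tfrac{1}{2}|\zeta|^2 + \tfrac{\sigma}{2}|\nab \zeta|^2$. The transport-like piece $\int_\Sigma (\zeta - \sigma\Delta\zeta)(\gamma/2)\p_1 \zeta$ becomes $\int_\Sigma (\gamma/4)\p_1 |\zeta|^2 + (\gamma\sigma/4)\p_1 |\nab\zeta|^2$ (again using symmetry/integration by parts for the Laplacian term), and vanishes by horizontal periodicity. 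The $\Theta^4$ piece contributes the last forcing term. Collecting everything, moving the source and forcing terms to the right, and cancelling appropriately yields exactly \eqref{lin_evolve_flat_00}. I do not expect any real obstacle here: the only thing to be careful about is the sign bookkeeping and confirming that the periodicity argument for the $\gamma/2$ transport term handles both the $\zeta^2$ and the $|\nab\zeta|^2$ contributions in a unified way.
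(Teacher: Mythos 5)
Your proof is correct and follows essentially the same route as the paper: dot with $v$, integrate by parts the stress term, use the stress boundary condition on $\Sigma$ together with the kinematic equation to convert $v_3$ into $\dt\zeta$, and observe that the $s\p_1 v\cdot v$ and the $(\gamma/2)\p_1\zeta$ contributions vanish by horizontal periodicity. The only difference is cosmetic: you spell out the periodicity cancellation for the $(\zeta-\sigma\Delta\zeta)(\gamma/2)\p_1\zeta$ term explicitly, whereas the paper handles it silently (it carries out that computation explicitly only in the geometric version, Proposition \ref{lin_evolve_geo}).
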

\begin{proof}
We take the dot product of the first equation in \eqref{linear_flattened} with $v$ and integrate over $\Omega$ to see that 
\begin{equation}\label{lin_evolve_flat_1}
 I + II = III
\end{equation}
for 
\begin{equation}
 I = \int_\Omega \dt v \cdot v, II = \int_\Omega \diverge S(q,v) \cdot v, III = \int_\Omega v \cdot \Theta^1 - v\cdot(s \p_1 v + s' v_3 e_1).
\end{equation}
We have that 
\begin{equation}\label{lin_evolve_flat_2}
 I = \frac{d}{dt} \int_\Omega \hal \abs{v}^2.
\end{equation}
Next we compute
\begin{equation}
II =  \int_\Omega - S(q,v) : \nab v + \int_\Sigma S(q,v)e_3 \cdot v := II_1 + II_2.
\end{equation}
A simple computation reveals that 
\begin{equation}
 II_1 = \int_\Omega \frac{1}{2} \abs{\sg v}^2 - q \Theta^2.
\end{equation}
We use \eqref{linear_flattened} to rewrite 
\begin{multline}
 II_2 = \int_\Sigma ( \zeta - \sigma \Delta \zeta) v_3 - \gamma \zeta v_1 + \Theta^3 \cdot v = \int_\Sigma ( \zeta - \sigma \Delta \zeta) \left(\dt \zeta  +\frac{\gamma}{2} \p_1 \zeta  -\Theta^4 \right) - \gamma \zeta v_1 + \Theta^3 \cdot v \\
= \frac{d}{dt} \left( \int_\Sigma \frac{1}{2}\abs{\zeta}^2 + \frac{\sigma}{2} \abs{\nab \zeta}^2 \right) -  \int_\Sigma ( \zeta - \sigma \Delta \zeta) \Theta^4 + \gamma \zeta v_1 - \Theta^3 \cdot v, 
\end{multline}
which shows that
\begin{equation}\label{lin_evolve_flat_3}
 II = \int_\Omega \frac{1}{2} \abs{\sg v}^2 - q \Theta^2 +  \frac{d}{dt} \left( \int_\Sigma \frac{1}{2} \abs{\zeta}^2 + \frac{\sigma}{2} \abs{\nab \zeta}^2 \right) -  \int_\Sigma ( \zeta - \sigma \Delta \zeta) \Theta^4 + \gamma \zeta v_1 - \Theta^3 \cdot v.
\end{equation}
Finally, we eliminate one of the terms in $III$ by using the fact that $s = s(x_3)$:
\begin{equation}\label{lin_evolve_flat_4}
 \int_\Omega v \cdot s \p_1 v = \int_\Omega  \p_1 \left(s\frac{\abs{v}^2}{2}\right) = 0.
\end{equation}
Then \eqref{lin_evolve_flat_00} follows by combining \eqref{lin_evolve_flat_1}, \eqref{lin_evolve_flat_2},  \eqref{lin_evolve_flat_3}, and \eqref{lin_evolve_flat_4}.

\end{proof}

Next we record the exact forms of the forcing terms that appear in \eqref{linear_flattened} when we rewrite \eqref{ns_geometric} in this form.  The first is $G^1 = \hat{G}^2 + \tilde{G}^1$ for
\begin{equation}\label{G_1_hat}
\hat{G}^1 =  - \diverge (\sg_{I-\a} u) - \diverge_{\a-I}(pI - \sg_\a u) - u\cdot \naba u  +  \dt \bar{\eta} \tilde{b} K \p_3 u
\end{equation}
and
\begin{equation}\label{G_1_tilde}
 \tilde{G}^1 = -(s \circ \Phi_3 - s) \p_1 u + (s\circ \Phi_3 e_1) \cdot \nab_{\a - I} u - u \cdot \nab_{\a-I} (s \circ \Phi_3 e_1) - u \cdot \nab [ (s\circ \Phi_3 - s)e_1  ].
\end{equation}
The second is
\begin{equation}\label{G_2_hat}
 G^2 = \diverge_{(I - \a)} u
\end{equation}
and the third is
$G^3 = \hat{G}^3 + \check{G}^3 +\tilde{G}^3$ for
\begin{equation}\label{G_3_hat}
 \hat{G}^3 = [- S_\a(p,u) + \eta I ](\n-e_3) - \sg_{(I-\a)} u e_3 
\end{equation}
\begin{equation}\label{G_1_check}
 \check{G}^3 =  \sigma(\Delta \eta - \mathfrak{H}(\eta)) e_3 + \sigma \mathfrak{H}(\eta)(e_3 - \n) 
\end{equation}
and
\begin{equation}\label{G_3_tilde}
 \tilde{G}^3 = \gamma \eta M(e_3 - \n).
\end{equation}
The fourth is $G^4 = \hat{G}^4 + \tilde{G}^4$ for 
\begin{equation}\label{G_4_hat}
 \hat{G}^4 = u \cdot(\n-e_3)
\end{equation}
and
\begin{equation}\label{G_4_tilde}
 \tilde{G}^4 = -\frac{\gamma}{2} \eta^2 \p_1 \eta.
\end{equation}

\section{Estimates of the nonlinearities }\label{sec_nlins}

In this section we develop the estimates of the nonlinearities needed to close our scheme of a priori estimates.  

\subsection{$L^\infty$ estimates}

The next result establishes some key $L^\infty$ bounds that will be used repeatedly throughout the paper.

\begin{prop}\label{infty_ests}
There exists a universal constant $\delta \in (0,1)$ such that if $\ns{\eta}_{5/2} \le \delta$, then the following bounds hold.
\begin{enumerate}
 \item We have that
\begin{equation}
 \norm{J-1}_{L^\infty} +\norm{\n-1}_{L^\infty} + \norm{A}_{L^\infty} + \norm{B}_{L^\infty} \le \hal 
\end{equation}
and 
\begin{equation}
 \norm{K}_{L^\infty} + \norm{\a}_{L^\infty} \ls 1.
\end{equation}
\item The mapping $\Phi$ given by \eqref{mapping_def} is a diffeomorphism from $\Omega$ to $\Omega(t)$.
\item For all $v \in H^1(\Omega)$ such that $v=0$ on $\Sigma_b$ we have the estimate 
\begin{equation}
 \int_\Omega \abs{\sg v}^2 \le \int_\Omega J \abs{\sg_\a v}^2 + C\left( \norm{\a-I}_{L^\infty} + \norm{J-1}_{L^\infty} \right) \int_\Omega \abs{\sg v}^2
\end{equation}
for a universal constant $C>0$.
\end{enumerate}

\end{prop}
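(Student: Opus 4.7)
The plan is to derive everything from two elementary analytic tools: (a) Sobolev embedding together with continuity of the Poisson extension, and (b) Korn's inequality for vector fields vanishing on $\Sigma_b$. Since the free surface lives in 2D and the bulk in 3D, the relevant embeddings are $H^{5/2}(\Sigma) \hookrightarrow C^1(\Sigma)$ and $H^3(\Omega) \hookrightarrow C^1(\bar\Omega)$. Combined with the standard bound $\norm{\bar\eta}_{H^3(\Omega)} \ls \norm{\eta}_{H^{5/2}(\Sigma)}$ for the Poisson extension $\mathcal{P}$, this yields the single master estimate
\begin{equation}
\norm{\bar\eta}_{C^1(\bar\Omega)} + \norm{\eta}_{C^1(\Sigma)} \ls \norm{\eta}_{5/2} \ls \sqrt{\delta}.
\end{equation}

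For item (1), I will simply plug this master estimate into the explicit definitions \eqref{ABJ_def}. Since $\tilde{b} = 1 + x_3/b$ is bounded on $\Omega$, we get $\norm{A}_{L^\infty} + \norm{B}_{L^\infty} + \norm{J-1}_{L^\infty} \ls \sqrt{\delta}$, and $\norm{\n - e_3}_{L^\infty} \ls \sqrt{\delta}$ from the formula \eqref{N_def}. Choosing $\delta$ small enough makes all of these at most $1/2$. The lower bound $J \ge 1/2$ then forces $\norm{K}_{L^\infty} \le 2$, and from \eqref{A_def} we read off $\norm{\a}_{L^\infty} \ls 1$.

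For item (2), I will use that $\Phi_3(x',\cdot)$ is strictly monotone on $[-b,0]$ for each $x' \in \Sigma$, since $\partial_3 \Phi_3 = J \ge 1/2 > 0$ by item (1). Because $\Phi_3(x',-b) = -b$ and $\Phi_3(x',0) = \bar\eta(x',0) = \eta(x')$, the map $\Phi$ takes $\Omega$ bijectively onto $\Omega(t)$ and, together with $J = \det \nab \Phi > 0$, the inverse function theorem upgrades this to a global diffeomorphism.

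The main work is item (3), and the key algebraic identity is
\begin{equation}
\abs{\sg v}^2 - J\abs{\sg_\a v}^2 = (1-J)\abs{\sg v}^2 + 2J\, \sg v : \sg_{I-\a} v - J \abs{\sg_{I-\a} v}^2,
\end{equation}
obtained by writing $\sg_\a v = \sg v - \sg_{I-\a}v$ and expanding. Pointwise I have $\abs{\sg_{I-\a} v} \le C \norm{\a-I}_{L^\infty} \abs{\nab v}$, so combining with item (1) (to bound $J$) and Young's inequality produces
\begin{equation}
\abs{\sg v}^2 - J\abs{\sg_\a v}^2 \le C\bigl(\norm{J-1}_{L^\infty} + \norm{\a - I}_{L^\infty}\bigr)\bigl(\abs{\sg v}^2 + \abs{\nab v}^2\bigr).
\end{equation}
Integrating over $\Omega$ and invoking Korn's inequality $\int_\Omega \abs{\nab v}^2 \ls \int_\Omega \abs{\sg v}^2$, which is available precisely because $v \in \hhh$ vanishes on $\Sigma_b$, closes the argument. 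There is no real obstacle here; the only thing to watch is that the smallness parameter $\delta$ must be chosen small enough simultaneously for item (1) (to render the geometric quantities uniformly small) and for the Korn-based absorption in item (3) to be structurally consistent with the universal constant appearing in the final inequality.
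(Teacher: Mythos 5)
Your proof is correct, and it reconstructs in a self-contained way exactly what the paper delegates to the cited prior work (Lemma 2.4 and Proposition 4.3 of \cite{guo_tice_per}): Sobolev embedding and boundedness of the Poisson extension for items (1)--(2), and the $\sg_{\a} v = \sg v - \sg_{I-\a} v$ decomposition plus Young's inequality plus Korn's inequality (available because $v$ vanishes on $\Sigma_b$) for item (3). Two small remarks: the quantity $\norm{\n-1}_{L^\infty}$ in the statement should indeed be read as $\norm{\n - e_3}_{L^\infty}$, as you implicitly do; and your master estimate $\norm{\bar\eta}_{C^1(\bar\Omega)} \ls \norm{\eta}_{5/2}$ should be understood as a bound on the bounded slab $\Omega$ rather than on the half-space $\Omega_-$, where the zero Fourier mode of $\eta$ would make the $L^2$ piece infinite; on $\Omega$ the chain $\mathcal{P}: H^{5/2}(\Sigma)\to H^3(\Omega)$ followed by $H^3(\Omega)\hookrightarrow C^1(\bar\Omega)$ is valid, which is equivalent to the paper's route through Lemmas \ref{p_poisson} and \ref{p_poisson_2}.
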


\begin{proof}
The proof of the first item, based on Lemmas \ref{sobolev_product_1}, \ref{p_poisson}, and \ref{p_poisson_2}, may be found in Lemma 2.4 of \cite{guo_tice_per}.  The second follows easily from the embedding $H^{3}(\Omega) \hookrightarrow C^1(\Omega)$.  The proof of the third item can be found in proof of Proposition 4.3 in \cite{guo_tice_per}.
\end{proof}

\subsection{Estimates of the $G$ forcing terms}

We now present the estimates for the $G^i$ nonlinearities.  Estimates of the same general form are now well-known in the literature: \cite{guo_tice_per,jang_tice_wang_gwp,tan_wang}.

\begin{thm}\label{nlin_G}
Let $G^1,G^2,G^3,G^4$ be given by \eqref{G_1_hat}--\eqref{G_4_tilde}. Assume that $\se_{n}^\sigma \le \delta$ for the universal $\delta \in (0,1)$ given by Proposition \ref{infty_ests}.  Then there exists a polynomial $P(\cdot,\cdot)$ with non-negative universal coefficients such that
\begin{equation}\label{n_G_0}
\sum_{j=0}^{n-1} \ns{ \dt^j  G^1}_{2n-2j-2}  +  \ns{ \dt^j G^2}_{2n -2j - 1} +
 \ns{\dt^j  G^3}_{2n-2j -3/2} + \ns{\dt^j G^4}_{2n-2j-1/2 }
 \ls P(\sigma,\gamma) (\se_{n}^0 \se_{n}^\sigma + \k \f_{n}),
\end{equation}
and
\begin{multline}\label{n_G_00}
 \sum_{j=0}^{n-1}\left( \ns{ \dt^j  G^1}_{2n-2j-1}  +  \ns{ \dt^j G^2}_{2n -2j} +
 \ns{\dt^j  G^3}_{2n-2j -1/2} \right)  
+  \ns{ G^4}_{2n-1/2} + \sigma^2 \ns{G^4}_{2n+1/2}  \\
+ \ns{\dt G^4}_{2n-2} + \sigma^2 \ns{\dt G^4}_{2n-3/2} + \sum_{j=2}^{n} \ns{\dt^j  G^4}_{2n-2j+1/2}
\ls P(\sigma,\gamma) \left( \se_{n}^0  \sd_{n}^\sigma +  \k \f_{n}\right),
\end{multline}

\end{thm}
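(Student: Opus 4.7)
The plan is to bound each of the four nonlinearities term by term, exploiting the decompositions \eqref{G_1_hat}--\eqref{G_4_tilde} and the Leibniz rule for $\dt^j$. First, under the smallness hypothesis $\se_n^\sigma \le \delta$, Proposition \ref{infty_ests} puts us in a regime where the quantities $\a - I$, $J-1$, $K$, $\n - e_3$, $M(\n - e_3)$, etc.\ are smooth nonlinear functions of $\bar\eta$ and its derivatives; hence Sobolev control of these coefficients is inherited from $\eta$ via the Poisson extension bounds (Lemma \ref{p_poisson}). Similarly, $s \circ \Phi_3 - s$ expands as $s'\bar\eta \tilde b + O(\bar\eta^2)$ and carries an explicit factor of $\gamma$, which gets absorbed into $P(\sigma,\gamma)$.

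Second, for each monomial arising from Leibniz, we apply product estimates in Sobolev spaces to split it into a low-regularity factor, bounded by $\se_n^0$ (or by $\k$ when necessary), and a high-regularity factor. For the estimate \eqref{n_G_0} we compute $H^{2n-2j-2}(\Omega)$ norms interior and $H^{2n-2j-3/2}(\Sigma)$, $H^{2n-2j-1/2}(\Sigma)$ norms on the boundary, and here the high factor always fits within $\se_n^\sigma$; boundary contributions in $\hat G^3$, $\hat G^4$ are handled by trace estimates. For \eqref{n_G_00} the target index is raised by one (half on boundary pieces), so the high-regularity factor is controlled instead by $\sd_n^\sigma$, exactly matching the gain encoded in the definitions \eqref{E_def} and \eqref{D_def}. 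The $\sigma^2$ coefficients on the $G^4$ terms in \eqref{n_G_00} track the $\sigma$-weighted pieces of $\sd_n^\sigma$ and arise from pairing $\tilde G^4 = -(\gamma/2)\eta^2 \p_1 \eta$ or a $\check G^3$-like contribution with $\sigma \Delta \zeta$ on the test side.

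Third, the $\k \f_n$ contribution appears exactly when the derivative count forces $\eta$ into its top norm $\ns{\eta}_{2n+1/2}$. This happens principally in $\check G^3 = \sigma(\Delta \eta - \mathfrak H(\eta))e_3 + \sigma \mathfrak H(\eta)(e_3 - \n)$, which is a sum of products of the form $\sigma \cdot (\nab\eta)^{\text{low}} \cdot \nab^2 \eta^{\text{high}}$, and in the corresponding worst commutators of $\hat G^1$ and $\hat F^{3,r}$-type terms involving $\dt^\ell \bar\eta$ at top regularity. In these cases the low factor is placed in $L^\infty$, $H^3(\Sigma)$, or $H^{5/2}(\Sigma)$ — precisely what $\k$ controls — so that the high factor can absorb the full $\ns{\eta}_{2n+1/2} = \f_n$. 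This structural choice is what avoids derivative loss: $\se_n^\sigma$ alone only supplies $2n$ derivatives of $\eta$ (the $\sigma$-weighted $\ns{\eta}_{2n+1}$ piece being useless at $\sigma = 0$), so the missing half-derivative must come from $\f_n$.

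The main obstacle is the careful bookkeeping of where the extra half-derivative of $\eta$ gets paid for: one must systematically ensure that the low-regularity factor always lands in the $C^2_b(\Omega)$, $H^3(\Sigma)$, or $H^{5/2}(\Sigma)$ norms captured by $\k$, rather than in some intermediate norm controlled only by $\se_n^\sigma$ — otherwise one would produce terms like $\se_n^\sigma \f_n$, which cannot be closed in the two-tier method when $\sigma = 0$. A secondary obstacle is keeping $P(\sigma,\gamma)$ free of negative powers of $\sigma$: whenever $\mathfrak H(\eta)$ appears it must be handled as $\sigma \mathfrak H(\eta) = \sigma \Delta\eta + \sigma(\text{quadratic})$, never by dividing through, so that the $\sigma$-weighted pieces of $\se_n^\sigma$ and $\sd_n^\sigma$ are genuinely leveraged without introducing $\sigma^{-1}$ factors.
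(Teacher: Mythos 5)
The paper's own proof of Theorem \ref{nlin_G} is essentially a citation: it observes that the decomposition $G^i = \hat G^i + \check G^i + \tilde G^i$ isolates the $\sigma$-dependent pieces (the $\check G^i$) and the $\gamma$-dependent pieces (the $\tilde G^i$), points to Lemma 3.3 of \cite{jang_tice_wang_gwp} for the $\hat G^i$ and $\check G^i$ bounds, and asserts a simple modification handles the $\tilde G^i$. Your plan — Leibniz expansion, Sobolev product estimates with a low/high split, Poisson-extension bounds for the coefficients built from $\bar\eta$, and absorbing the explicit $\gamma$ in $s\circ\Phi_3 - s$ into $P(\sigma,\gamma)$ — is exactly the mechanism the cited lemma runs, so the approach is aligned with the paper's.

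There is one concrete inaccuracy worth correcting. You name $\check G^3$ as a principal source of the $\k\f_n$ term, but that cannot be right: every piece of $\check G^3$ carries an explicit factor of $\sigma$, so after squaring the worst contributions are of the schematic form $\sigma^2\ns{\eta}_{2n+1/2}$ or $\sigma^2\ns{\eta}_{2n+3/2}$, and these are already absorbed into the $\sigma$-weighted pieces of $\se_n^\sigma$ (via $\sigma\ns{\eta}_{2n+1}$, up to a benign polynomial factor of $\sigma$) and of $\sd_n^\sigma$ (via $\sigma^2\ns{\eta}_{2n+3/2}$), respectively. The whole point of $\f_n$ is to pay for an extra half-derivative of $\eta$ that is \emph{not} $\sigma$-weighted. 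Those arise from the un-$\sigma$-weighted boundary terms $\hat G^3 = [-S_\a(p,u)+\eta I](\n-e_3) - \sg_{I-\a}u\, e_3$ and $\hat G^4 = u\cdot(\n-e_3)$: landing the top $\Sigma$-derivatives on $\n - e_3 = (-\p_1\eta,-\p_2\eta,0)$ forces $\eta$ into $H^{2n+1/2}(\Sigma) = \f_n$, with the remaining factor placed in the low norms captured by $\k$. Relatedly, the $\sigma^2$-weighted $G^4$ norms in \eqref{n_G_00} are indeed calibrated to the $\sigma^2$-weighted $\eta$ pieces of $\sd_n^\sigma$, but they feed in through the kinematic boundary condition inside Theorem \ref{dissipation_comparision}, not through the $\sigma\Delta\zeta$ pairing, which belongs to Propositions \ref{lin_evolve_geo}, \ref{lin_evolve_flat}, and \ref{spatial_int_est}.
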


\begin{proof}

The definition of the $G^i$ terms in \eqref{G_1_hat}--\eqref{G_4_tilde} allows us to write  $G^i = \hat{G}^i + \check{G}^i + \tilde{G}^i$, where each of the sub-terms is either a nonlinearity of at least quadratic order, or else zero.  Consequently, it suffices to prove the bounds \eqref{n_G_0} and \eqref{n_G_00} with $G^i$ replaced by $\hat{G}^i$, $\check{G}^i$, and $\tilde{G}^i$.  Note that this notation has been employed so that only the terms $\check{G}^i$ involve the  parameter $\sigma$ and only the terms $\tilde{G}^i$ involve the parameter $\gamma$.

The proofs of the estimates for the $\hat{G}^i$ and $\check{G}^i$ terms can be found in Lemma 3.3 of \cite{jang_tice_wang_gwp}, though the numbering scheme for the nonlinearities is slightly different there.  Note, though, that the resulting estimates do not involve $\gamma$, and so the polynomial on the right side of the estimates do not depend on $\gamma$.   A simple modification of the arguments used in Lemma 3.3 of \cite{jang_tice_wang_gwp} yields the estimates for the $\tilde{G}^i$ terms as well, but in this case the right sides of the estimates have polynomials in $\gamma$.  For the sake of brevity we will omit further details.
\end{proof}

Our next result provides some bounds for nonlinearities appearing in integrals.

\begin{prop}\label{spatial_int_est}
Let $\alpha \in \mathbb{N}^2$ be such that $\abs{\alpha}=n$.  Assume that $\se_{n}^\sigma \le \delta$ for the universal $\delta \in (0,1)$ given by Proposition \ref{infty_ests}.  Then
\begin{equation}\label{eta es}
\abs{ \int_{\Sigma}   \pa \eta \pa  G^4 } \ls \sqrt{\se_{n}^0}  \sd_{n}^0 +\sqrt{  \sd_{n}^0 \k \f_{n}}
\end{equation}
and 
\begin{equation}\label{eta es2}
\abs{  \int_{\Sigma}   \sigma \Delta \pa \eta \pa  G^4  } \ls  \ks \left[ \sqrt{\se_{n}^0 \sd_{n}^0 \sd_{n}^\sigma}  +   \sqrt{  \sd_{n}^\sigma \k \f_{n}} \right] .
\end{equation}
\end{prop}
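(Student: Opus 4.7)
Plan: My approach uses Cauchy--Schwarz on $\Sigma$ to decouple $\pa\eta$ from $\pa G^4$, followed by a Moser-type product estimate on $\pa G^4$. I exploit the explicit form $G^4 = \hat{G}^4 + \tilde{G}^4$ given by \eqref{G_4_hat}--\eqref{G_4_tilde}, namely
\[\hat{G}^4 = u\cdot(\n - e_3) = -u_1\p_1\eta - u_2\p_2\eta,\qquad \tilde{G}^4 = -\tfrac{\gamma}{2}\eta^2\p_1\eta.\]

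For \eqref{eta es}, Cauchy--Schwarz gives $\abs{\int_\Sigma \pa\eta\,\pa G^4}\le \norm{\pa\eta}_{L^2(\Sigma)}\norm{\pa G^4}_{L^2(\Sigma)}$. The first factor satisfies $\norm{\pa\eta}_{L^2(\Sigma)}\le \norm{\eta}_{H^{2n-1/2}(\Sigma)}\le\sqrt{\sd_n^0}$, valid because $\sd_n^0$ controls $\norm{\eta}_{2n-1/2}^2$ and $n\le 2n-1/2$ for $n\ge 1$. The second factor is handled by the Moser-type product bound
\[\norm{u\cdot(\n-e_3)}_{H^n(\Sigma)}\ls \norm{u}_{H^n(\Sigma)}\norm{\nab\eta}_{L^\infty(\Sigma)}+\norm{u}_{L^\infty(\Sigma)}\norm{\nab\eta}_{H^n(\Sigma)}.\]
The first summand is bounded by $\sqrt{\sd_n^0\,\k}$, using the trace $\norm{u}_{H^n(\Sigma)}\le\norm{u}_{H^{n+1/2}(\Omega)}\le\norm{u}_{H^{2n+1}(\Omega)}\le\sqrt{\sd_n^0}$ and the 2D Sobolev embedding $\norm{\nab\eta}_{L^\infty(\Sigma)}^2\ls\norm{\eta}_{5/2}^2\le\k$. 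The second summand is bounded by $\sqrt{\k\,\f_n}$, using $\norm{u}_{L^\infty(\Omega)}^2\le\k$ and $\norm{\nab\eta}_{H^n(\Sigma)}=\norm{\eta}_{H^{n+1}(\Sigma)}\le\norm{\eta}_{H^{2n+1/2}(\Sigma)}=\sqrt{\f_n}$, valid for $n\ge 1/2$. The cubic piece $\tilde{G}^4$ is treated identically, absorbing an extra factor $\norm{\eta}_{L^\infty(\Sigma)}\ls\sqrt{\k}$, and the $\gamma$-prefactor is harmless under the smallness assumption $\se_n^\sigma\le\delta$. Combining these contributions and using $\k\ls\se_n^0$ (which holds for $n\ge 2$) yields \eqref{eta es}.

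For \eqref{eta es2}, since $\Sigma$ is a closed flat torus I first integrate by parts to transfer the Laplacian off $\pa\eta$:
\[\sigma\int_\Sigma \Delta\pa\eta\,\pa G^4 = -\sigma\int_\Sigma \nab\pa\eta\cdot\nab\pa G^4,\]
and then apply Cauchy--Schwarz with the entire $\sigma$-factor placed on the $\eta$-side. This yields
\[\sigma\norm{\nab\pa\eta}_{L^2(\Sigma)}\le\sigma\norm{\eta}_{H^{n+1}(\Sigma)}\le\sigma\norm{\eta}_{H^{2n+3/2}(\Sigma)}\le\sqrt{\sd_n^\sigma},\]
using that $\sd_n^\sigma\ge\sigma^2\norm{\eta}_{2n+3/2}^2$ together with the inclusion $n+1\le 2n+3/2$. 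For the $G^4$-factor I would rerun the Moser bound from \eqref{eta es} with $n$ replaced by $n+1$, obtaining $\norm{\nab\pa G^4}_{L^2(\Sigma)}\le\norm{G^4}_{H^{n+1}(\Sigma)}\ls\sqrt{\sd_n^0\,\k}+\sqrt{\k\,\f_n}$; the admissibility conditions $n+3/2\le 2n+1$ and $n+2\le 2n+1/2$ both hold for $n\ge 2$. Multiplying these two bounds and once more invoking $\k\ls\se_n^0$ produces \eqref{eta es2}. Crucially, no negative power of $\sigma$ has been introduced anywhere, so the resulting constant is of class $\ks$.

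\textbf{Main obstacle.} The principal bookkeeping challenge is in \eqref{eta es2}: concentrating the entire $\sigma$-prefactor onto the $\eta$-factor and exploiting the dissipation's built-in $\sigma^2$-weighted term $\sigma^2\norm{\eta}_{2n+3/2}^2\le\sd_n^\sigma$ is what keeps the resulting constant in the class $\ks$ rather than the weaker class $\cs$ that would arise from any strategy involving a negative power of $\sigma$. In \eqref{eta es} no $\sigma$-tracking is needed, and the only mild subtlety is that the Moser bound automatically produces the two summands $\sqrt{\sd_n^0\k}$ and $\sqrt{\k\f_n}$, which pair with $\sqrt{\sd_n^0}$ to match the two terms on the right-hand side. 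The remaining verifications -- the trace inequalities and the 2D Sobolev embeddings $H^s\hookrightarrow L^\infty$ on $\Sigma$ -- are standard under the assumption $n\ge 3$ maintained throughout the paper.
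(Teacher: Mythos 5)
Your argument follows the proposition's literal hypothesis $|\alpha| = n$, and at that order it does close, but that hypothesis appears to be a typo: the proposition is invoked in the proof of Theorem~\ref{space_derivative_est} (Case~1, estimate \eqref{sd_5}) for $\alpha\in\mathbb{N}^2$ with $|\alpha| = 2n$, and the cited Lemma~3.5 of \cite{jang_tice_wang_gwp} treats that top-order case as well. At $|\alpha|=2n$ the blunt Cauchy--Schwarz / Moser split does not close. For \eqref{eta es} you would have $\norm{\pa\eta}_{L^2(\Sigma)}\le\norm{\eta}_{2n}$, which is controlled by $\sqrt{\se_n^0}$ but \emph{not} by $\sqrt{\sd_n^0}$; and when all $2n$ derivatives land on $\nab\eta$ inside $\hat G^4 = -u_i\p_i\eta$, the second summand in your Moser bound becomes $\norm{u}_{L^\infty}\norm{\eta}_{2n+1}$, overshooting $\sqrt{\f_n}=\norm{\eta}_{2n+1/2}$ by half a derivative. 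Pairing these two factors would require control of $\norm{\eta}_{2n}\norm{\eta}_{2n+1}$, which none of $\se_n^0$, $\sd_n^0$, $\k$, $\f_n$ delivers in the structured form needed.

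The missing idea is to integrate by parts on the single dangerous commutator term rather than estimating $\pa G^4$ wholesale. Since $\hat G^4 = -u_i\p_i\eta$, the $\beta=0$ contribution to $\int_\Sigma\pa\eta\,\pa\hat G^4$ is
\begin{equation*}
-\int_\Sigma \pa\eta\; u_i\,\p_i\pa\eta = -\frac12\int_\Sigma u_i\,\p_i\abs{\pa\eta}^2 = \frac12\int_\Sigma(\p_i u_i)\,\abs{\pa\eta}^2,
\end{equation*}
which is then bounded by $\norm{\nab u}_{L^\infty(\Sigma)}\norm{\eta}_{2n}^2 \ls \sqrt{\k}\,\norm{\eta}_{2n-1/2}\norm{\eta}_{2n+1/2} \ls \sqrt{\sd_n^0\,\k\,\f_n}$, after interpolating $\norm{\eta}_{2n}^2$ between $\norm{\eta}_{2n-1/2}\le\sqrt{\sd_n^0}$ and $\norm{\eta}_{2n+1/2}=\sqrt{\f_n}$. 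The commutator terms with $\beta>0$ are then routine, much as you describe. The same device, combined with the interpolation $\sigma\norm{\eta}_{2n+1}^2 \le \norm{\eta}_{2n+1/2}\cdot\sigma\norm{\eta}_{2n+3/2}\le\sqrt{\f_n\,\sd_n^\sigma}$ and your (correct) observation that the $\sigma$-prefactor must be absorbed into the $\sigma^2$-weighted part of $\sd_n^\sigma$, yields \eqref{eta es2}. Without this integration by parts plus interpolation step your sketch will not close in the regime where the proposition is actually used, which is precisely the half-derivative overshoot that makes the lemma nontrivial.
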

\begin{proof}
These estimates are proved with slightly different notation in Lemma 3.5 of \cite{jang_tice_wang_gwp}.  
\end{proof}

\subsection{Estimates of the $F$ forcing terms}

We now present the estimates of the $F$ forcing terms that appear in the geometric form of the equations \eqref{linear_geometric}.

\begin{thm}\label{nlin_F}
Let $F^{i,j}$ be defined by \eqref{nlin_F1}--\eqref{nlin_F4} for $0\le j\le n$. Assume that $\se_{n}^\sigma \le \delta$ for the universal $\delta \in (0,1)$ given by Proposition \ref{infty_ests}.  Then
\begin{equation}\label{nlin_F_0}
 \ns{F^{1,j} }_{0}+ \ns{  F^{2,j} }_{0}  +\ns{\dt(JF^{2,j})}_{0}  + \ns{F^{3,j}}_{0} + \ns{F^{4,j}}_{0} \ls P(\sigma,\gamma) \se_{n}^0 \sd_{n}^\sigma
\end{equation} 
and
\begin{equation}\label{nlin_F_00}
 \ns{F^{2,n}}_0 \ls (\se_{n}^0)^2.
\end{equation}
\end{thm}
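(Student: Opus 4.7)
The plan is to decompose each forcing term $F^{i,j}$ via the explicit Leibniz expansions \eqref{nlin_F1}--\eqref{nlin_F4} into a finite sum of products. Each product is built from derivatives of the geometric coefficients $\bar{\eta}$, $\a$, $J$, $K$, $\n$ (all coming ultimately from Poisson extensions of $\eta$), derivatives of $u$, $p$, or $\eta$, and, for the $\tilde F^{i,j}$ pieces, derivatives of the shear profile $s\circ \Phi_3$ or of the capillary correction $\sigma(\mathfrak{H}(\eta) - \Delta \eta)$ and $\gamma \eta M \n$. The $L^2$ norm of each product is estimated by a tame Sobolev product inequality (and, for the boundary nonlinearities $F^{3,j}, F^{4,j}$, the trace inequality $\norm{v}_{H^{s-1/2}(\Sigma)} \ls \norm{v}_{H^s(\Omega)}$). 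The uniform $L^\infty$ bounds for $\a, J, K, \n$ guaranteed by Proposition \ref{infty_ests} absorb the zeroth-order geometric factors without cost.

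The core mechanism is the parabolic derivative counting built into $\se_n^\sigma$ and $\sd_n^\sigma$. Because every term in $\hat F^{1,j}, F^{2,j}, \hat F^{3,j}, F^{4,j}$ arises from the Leibniz rule with summation index $\ell \ge 1$, at least one derivative always falls on an $\eta$-dependent factor, making each product genuinely quadratic. For each product I would place the factor of lower parabolic weight in an $L^\infty$-type norm (using $H^2(\Omega) \hookrightarrow L^\infty(\Omega)$, permissible since $n\ge 3$), and the factor of higher parabolic weight in the $L^2$-based Sobolev norm corresponding to one additional spatial derivative over $\se_n^\sigma$, which is precisely what $\sd_n^\sigma$ provides. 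The low factor is then controlled by $\se_n^0$ (no surface tension is required for these moderate-order pieces), while the high factor is controlled by $\sd_n^\sigma$, yielding the targeted $\se_n^0 \sd_n^\sigma$ bound. The half-order-in-time term $\ns{\dt(JF^{2,j})}_0$ follows from a further Leibniz expansion, bounding $\dt J$ via $\mathcal{P} \dt \eta$ in terms controlled by $\se_n^0$. Powers of $\sigma$ enter only through the capillary contributions in $\tilde F^{3,j}$ (via $\sigma \Delta \eta$ and $\sigma \mathfrak{H}(\eta)$, with $\mathfrak{H}(\eta) - \Delta \eta$ expanded as a polynomial in $\nab \eta$ whose denominator is uniformly bounded below by Proposition \ref{infty_ests}), while powers of $\gamma$ enter through $s, s'$ and the explicit $\gamma \eta M$ term; collecting these gives the polynomial $P(\sigma,\gamma)$ with non-negative universal coefficients. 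These estimates follow the same template as Lemma 3.3 of \cite{jang_tice_wang_gwp}, the only novelty being the bookkeeping of $\gamma$-powers generated by the shear background.

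The second bound concerns the borderline case $j=n$, in which no room remains to shift any factor into the higher-regularity dissipation norm. From \eqref{nlin_F2}, $F^{2,n} = -\sum_{0 < \ell \le n} C_{n\ell}\, \dt^\ell \a_{ij}\, \dt^{n-\ell}\p_j u_i$; for each $\ell$ one of the two factors carries at most $\lfloor n/2 \rfloor$ temporal derivatives and so, together with sufficient spatial smoothness, embeds into $L^\infty$ with norm controlled by $\se_n^0$. The remaining factor sits in $L^2$ with norm also controlled by $\se_n^0$, giving the bound $(\se_n^0)^2$; no surface-tension or shear-flow pieces appear in $F^{2,n}$, so no factor of $P(\sigma,\gamma)$ is needed. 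The main obstacle throughout is the careful parabolic bookkeeping that ensures, in every product appearing after expansion, a clean split into a low-parabolic-weight factor (absorbed by $\se_n^0$) and a high-parabolic-weight factor (absorbed by $\sd_n^\sigma$, or by $\se_n^0$ in the borderline case), together with a clean separation of $\sigma$- and $\gamma$-dependent coefficients; once this accounting is organized the pointwise estimates are routine applications of Sobolev product rules, embeddings, and the trace inequality.
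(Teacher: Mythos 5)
Your proposal is correct and follows essentially the same route as the paper: decompose each $F^{i,j}$ into quadratic (or higher) products via the Leibniz expansions, place the factor with lower parabolic weight (fewer temporal derivatives) in $L^\infty$ controlled by $\se_n^0$ via Sobolev embedding, place the other factor in $L^2$ controlled by $\sd_n^\sigma$ (or $\se_n^0$ in the borderline case $j=n$), and absorb zeroth-order geometric factors via Proposition \ref{infty_ests}, with the $\sigma$- and $\gamma$-dependence tracked through the $\check{F}$- and $\tilde{F}$-type pieces. The paper states this more tersely but the mechanism is identical, including the observation that for $F^{2,n}$ no dissipation factor is available and both factors must be absorbed by the energy.
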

\begin{proof}
Note that all terms in the definitions of $F^{i,j}$ are at least quadratic; each term can be written in the form $X Y$, where $X$ involves fewer temporal derivatives than $Y$. We may use the usual Sobolev embeddings, trace theory, and Proposition \ref{infty_ests} along with the definitions of $\se_{n}^\sigma$ and $\sd_{n}^\sigma$ to estimate $\norm{X}_{L^\infty}^2\ls  \se_{n}^0$ and $\norm{Y}_{0}^2\ls P(\sigma,\gamma) \sd_{n}^0$.  Then $\norm{XY}_0^2\le \norm{X}_{L^\infty}^2\norm{Y}_{0}^2\ls P(\sigma,\gamma) \se_{n}^0 \sd_{n}^\sigma$, and the estimate \eqref{nlin_F_0} follows by summing.  A similar argument proves \eqref{nlin_F_00}.
\end{proof}

Later in the paper we will encounter the functional
\begin{equation}\label{Hn_def}
 \mathcal{H}_n = \int_\Omega - \dt^{n-1} p  F^{2,n} J +  \hal \abs{\dt^n u}^2(J-1). 
\end{equation}
We will need to be able to estimate this in the following particular way.

\begin{prop}\label{Hn_estimate}
Let $\mathcal{H}_n$ be given by \eqref{Hn_def}, and suppose that $n \ge 2$. Assume that $\se_{n}^\sigma \le \delta$ for the universal $\delta \in (0,1)$ given by Proposition \ref{infty_ests}.  Then 
\begin{equation}
 \abs{\mathcal{H}_n} \ls (\se_{n}^0)^{3/2}
\end{equation}
\end{prop}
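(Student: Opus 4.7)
The plan is to estimate the two terms in $\mathcal{H}_n$ separately, each by $(\mathcal{E}_n^0)^{3/2}$, using H\"older, the nonlinear bound \eqref{nlin_F_00}, the $L^\infty$ bounds of Proposition \ref{infty_ests}, and the observation that $\mathcal{E}_n^0$ directly controls both $\|\partial_t^{n-1} p\|_1$ and $\|\partial_t^n u\|_0$ (since these terms appear in the definition of $\mathcal{E}_n^\sigma$ without a $\sigma$ prefactor).

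For the first piece, write
\begin{equation*}
 \left| \int_\Omega \partial_t^{n-1} p \, F^{2,n} J \right| \le \norm{J}_{L^\infty} \norm{\partial_t^{n-1} p}_{0} \norm{F^{2,n}}_{0}.
\end{equation*}
The smallness assumption $\mathcal{E}_n^\sigma \le \delta$ implies $\norm{J}_{L^\infty} \ls 1$ by Proposition \ref{infty_ests}. The definition \eqref{E_def} of $\mathcal{E}_n^0$ contains the term $\ns{\partial_t^{n-1} p}_{1}$, so $\norm{\partial_t^{n-1} p}_0 \ls \sqrt{\mathcal{E}_n^0}$. Finally, \eqref{nlin_F_00} gives $\norm{F^{2,n}}_0 \ls \mathcal{E}_n^0$. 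Multiplying yields the desired bound $(\mathcal{E}_n^0)^{3/2}$.

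For the second piece, I would extract $J-1$ in $L^\infty$:
\begin{equation*}
 \left| \int_\Omega \tfrac{1}{2} \abs{\partial_t^n u}^2 (J-1) \right| \le \tfrac{1}{2} \norm{J-1}_{L^\infty} \norm{\partial_t^n u}_0^2.
\end{equation*}
The factor $\norm{\partial_t^n u}_0^2$ is directly controlled by $\mathcal{E}_n^0$ via the term $\ns{\partial_t^n u}_0$ in \eqref{E_def}. For the factor $\norm{J-1}_{L^\infty}$, recall from \eqref{ABJ_def} that $J-1 = \bar{\eta}/b + \tilde{b}\, \partial_3 \bar{\eta}$, so standard Sobolev embedding $H^2(\Omega)\hookrightarrow L^\infty(\Omega)$ together with the Poisson extension estimate gives $\norm{J-1}_{L^\infty} \ls \norm{\bar{\eta}}_{H^3(\Omega)} \ls \norm{\eta}_{H^{5/2}(\Sigma)}$. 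Since $n \ge 2$ implies $2n \ge 4 > 5/2$, this is dominated by $\norm{\eta}_{2n}$, which in turn is controlled by $\sqrt{\mathcal{E}_n^0}$ via the term $\ns{\eta}_{2n}$ in \eqref{E_def}. Combining gives a second bound of the form $(\mathcal{E}_n^0)^{3/2}$.

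Summing the two bounds yields the claim. There is no real obstacle here: the estimate is essentially a bookkeeping check that $\mathcal{E}_n^0$ (as opposed to $\mathcal{E}_n^\sigma$) already contains all the derivative norms of $u$, $p$, and $\eta$ needed for this computation, and that the nonlinear gain of $\mathcal{E}_n^0$ from $\norm{F^{2,n}}_0$ and of $\sqrt{\mathcal{E}_n^0}$ from $\norm{J-1}_{L^\infty}$ are precisely what upgrade the $L^2$-type integrals to the cubic bound $(\mathcal{E}_n^0)^{3/2}$. The hypothesis $n\ge 2$ is used exactly to ensure this Sobolev embedding for $J-1$.
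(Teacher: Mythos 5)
Your proof is correct and follows the same route as the paper: Hölder on each of the two terms of $\mathcal{H}_n$, Proposition \ref{infty_ests} for $\norm{J}_{L^\infty}$ and $\norm{J-1}_{L^\infty}$ (the latter via the Poisson extension and Sobolev embedding, using $n\ge 2$ to get $\norm{\eta}_{5/2}\ls\sqrt{\se_n^0}$), and \eqref{nlin_F_00} for $\norm{F^{2,n}}_0\ls\se_n^0$. The paper packages the two terms into a single display before collapsing, but the estimates are identical in content.
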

\begin{proof}
We bound
\begin{equation}
 \abs{\mathcal{H}_n} \le \norm{\dt^{n-1} p}_0 \norm{F^{2,n}}_0 \norm{J}_{L^\infty} + \hal \norm{J-1}_{L^\infty} \ns{\dt^n u}_0.
\end{equation}
Then we use Proposition \ref{infty_ests} and Theorem \ref{nlin_F} to estimate  
\begin{equation}
 \norm{F^{2,n}}_0 \norm{J}_{L^\infty} \ls \se_{n}^0
\end{equation}
and we use the Sobolev embeddings to bound
\begin{equation}
 \norm{J-1}_{L^\infty} \ls \norm{\bar{\eta}}_{C^1(\Omega)} \ls \norm{\bar{\eta}}_{H^3(\Omega)} \ls \norm{\eta}_{5/2} \ls \sqrt{\se_{n}^0} 
\end{equation}
since $n \ge 2$.  Consequently, 
\begin{equation}
 \abs{\mathcal{H}_n} \ls  \sqrt{\se_{n}^0} \left( \norm{\dt^{n-1}p}_0 \sqrt{\se_{n}^0} + \ns{\dt^n u}_0\right) \ls (\se_{n}^0)^{3/2},
\end{equation}
which is the desired estimate.
\end{proof}

\section{General a priori estimates }\label{sec_apriori}

The purpose of this section is to present a priori estimates that are general in the sense that they are valid for both the problem with and without surface tension.  The general estimates presented here will be specially adapted later to each problem to prove different sorts of results.

\subsection{Energy-dissipation evolution estimates  }

For $\alpha \in \mathbb{N}^{1+2}$ write 
\begin{equation}\label{sb_alpha_def}
 \seb_\alpha^\sigma = \int_\Omega \hal \abs{\pa u}^2 + \int_{\Sigma} \frac{1}{2} \abs{\pa \eta}^2 + \frac{\sigma}{2} \abs{\nab \pa \eta}^2
\text{ and } \sdb_\alpha = \int_\Omega \frac{1}{2} \abs{\sg \pa u}^2
\end{equation}

Our first result derives energy-dissipation estimates related to pure temporal derivatives of the highest order.

\begin{thm}\label{time_derivative_est}
Assume that $\se_{n}^\sigma \le \delta$ for the universal $\delta \in (0,1)$ given by Proposition \ref{infty_ests}.  Let $\alpha \in \mathbb{N}^{1+2}$ be given by $\alpha = (n,0,0)$, i.e. $\p^a = \dt^n$.  Then for $\seb_\alpha^\sigma$ and $\sdb_\alpha$  given by \eqref{sb_alpha_def} we have the estimate
\begin{equation}\label{td_0}
\frac{d}{dt} \left( \seb_\alpha^\sigma  + \mathcal{H}_n \right) + \sdb_\alpha  \ls \gamma \sd_{n}^0 + \ks \sqrt{\se_{n}^0} \sd_{n}^\sigma
\end{equation}
where $\mathcal{H}_N$ is given by \eqref{Hn_def}.
\end{thm}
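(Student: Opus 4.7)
The plan is to apply $\dt^n$ to the system \eqref{ns_geometric} and view the result as an instance of the linearized geometric system \eqref{linear_geometric} with $(v,q,\zeta):=(\dt^n u, \dt^n p, \dt^n \eta)$, background coefficients $\Phi,\a,\n,J$ determined by $\eta$, and all commutators generated by pushing $\dt^n$ across the $\a$-dependent operators collected into the forcing terms $F^{1,n},F^{2,n},F^{3,n},F^{4,n}$ from \eqref{nlin_F1}--\eqref{nlin_F4}. Since $(u,\eta)$ already solve \eqref{ns_geometric}, they automatically satisfy the structural hypothesis \eqref{lin_evolve_geo_00}, so Proposition \ref{lin_evolve_geo} applies and produces an identity whose left side contains $\frac{d}{dt}\bigl[\int_\Omega \hal |v|^2 J + \int_\Sigma \hal|\zeta|^2+\frac{\sigma}{2}|\nab \zeta|^2\bigr] + \int_\Omega \hal J|\sg_\a v|^2$.

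To match this with $\frac{d}{dt}(\seb_\alpha^\sigma + \mathcal{H}_n) + \sdb_\alpha$, I would split $\int_\Omega \hal|v|^2 J = \int_\Omega \hal |v|^2 + \int_\Omega \hal |v|^2(J-1)$ and move the $(J-1)$ correction into the time derivative, accounting for the second piece of $\mathcal{H}_n$. For the dissipation, item (3) of Proposition \ref{infty_ests} together with the smallness hypothesis $\se_n^\sigma \le \delta$ gives $\int_\Omega J|\sg_\a v|^2 \ge \sdb_\alpha - C\sqrt{\se_n^0}\,\sdb_n$, so the error is absorbable into the target $\sqrt{\se_n^0}\,\sd_n^\sigma$.

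The crucial step is the pressure forcing contribution $\int_\Omega J\,\dt^n p\, F^{2,n}$, because our energy functional does not control $\dt^n p$. I would integrate by parts in time,
$$\int_\Omega J\, \dt^n p\, F^{2,n} = \frac{d}{dt}\int_\Omega J\, \dt^{n-1} p\, F^{2,n} - \int_\Omega \dt^{n-1} p\, \dt(J F^{2,n}),$$
and transfer the time derivative to the left, producing the first piece of $\mathcal{H}_n$. The remaining volume term is at least cubic in the perturbation, so Theorem \ref{nlin_F} together with the definitions of $\se_n^\sigma$ and $\sd_n^\sigma$ yields a $\sqrt{\se_n^0}\,\sd_n^\sigma$ bound.

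What remains are the shear interactions and the nonlinear forcings. The linear interactions $\int_\Omega \gamma \Phi_3 v_3 v_1 J$ and $\int_\Sigma \gamma \zeta(M\n)\cdot v$ combine, after $L^\infty$ control from Proposition \ref{infty_ests} and trace theory, into $\gamma\,\sd_n^0$; the cubic surface shear term $\int_\Sigma (\zeta-\sigma\Delta\zeta)\frac{\gamma}{2}\eta^2\p_1\zeta$ is swallowed by $\ks\sqrt{\se_n^0}\,\sd_n^\sigma$ since $\eta^2$ supplies the smallness. The forcing pieces $\int_\Omega J v\cdot F^{1,n}$, $\int_\Sigma -F^{3,n}\cdot v$, and $\int_\Sigma \zeta F^{4,n}$ are controlled by Theorem \ref{nlin_F} and trace theory. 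I expect the main obstacle to be the delicate piece $\int_\Sigma \sigma \Delta\zeta\, F^{4,n}$, whose $\sigma$-weighted nature must be treated by the argument of Proposition \ref{spatial_int_est} (in its purely temporal variant $\alpha=(n,0,0)$), integrating by parts in the horizontal directions to land one derivative on $F^{4,n}$ and then exploiting the structured bounds in Theorem \ref{nlin_F} so that the resulting prefactor is $\ks$ rather than $\cs$. The second point requiring care is the consistent bookkeeping of the $(1+\gamma^2)$, $\sigma$, and $\sigma^2$ weights across $\sd_n^\sigma$, especially for the highest temporal derivative $\dt^n \eta$, whose controlled norm sits in the $\sum_{j=3}^{n+1}$ block of \eqref{D_def}.
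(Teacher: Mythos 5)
Your plan matches the paper's proof step by step: apply Proposition \ref{lin_evolve_geo} with $(v,q,\zeta)=(\dt^n u,\dt^n p,\dt^n\eta)$, split $\int_\Omega \hal|v|^2 J$ and recover the bulk dissipation via Proposition \ref{infty_ests}(3), integrate by parts in time to form $\mathcal{H}_n$, estimate the shear interactions via $L^\infty$ bounds and trace theory to get $\gamma\,\sd_n^0$, and control $F^{1,n},F^{3,n}$ via Theorem \ref{nlin_F} and trace theory.

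The only place you diverge is where you flag $\int_\Sigma \sigma\Delta\zeta\,F^{4,n}$ as the delicate piece and propose treating it via integration by parts and a ``purely temporal variant'' of Proposition \ref{spatial_int_est}. That detour is both unnecessary and would not go through as described. It is unnecessary because for the purely temporal multi-index, $\zeta=\dt^n\eta$ retains $5/2$ spatial derivatives of control in the dissipation (the $j=n$ term $\ns{\dt^n\eta}_{5/2}$ in the $\sum_{j=3}^{n+1}$ block of \eqref{D_def}), so one may bound $\int_\Sigma \sigma\Delta\dt^n\eta\,F^{4,n}$ directly by $\sigma\norm{\dt^n\eta}_2\norm{F^{4,n}}_0\ls\ks\sqrt{\sd_n^0}\sqrt{\se_n^0\sd_n^\sigma}\ls\ks\sqrt{\se_n^0}\sd_n^\sigma$ using Theorem \ref{nlin_F} alone; this is exactly what the paper does in \eqref{td_6}. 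Proposition \ref{spatial_int_est} is needed only in Theorem \ref{space_derivative_est}, Case 1 (pure spatial derivatives of top order), where $\p^\alpha\eta$ has \emph{no} spare spatial regularity in the dissipation. It would not go through because the integration by parts you sketch would land a derivative on $F^{4,n}$ and require $\norm{F^{4,n}}_1$, which Theorem \ref{nlin_F} does not supply (only $\norm{F^{4,j}}_0$ is bounded there), and because Proposition \ref{spatial_int_est} is stated for purely spatial multi-indices $\alpha\in\mathbb{N}^2$, not for $\dt^n$. Replace that step with the direct $L^2$ pairing and your argument reproduces the paper's proof.
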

\begin{proof}

We apply Proposition \ref{lin_evolve_geo} with $v = \dt^{n} u$, $q = \dt^{n} p$, and $\zeta = \dt^{n} \eta$ in order to see that 
\begin{multline}\label{td_1}
\frac{d}{dt} \left[\int_\Omega \hal \abs{\dt^{n} u}^2 J + \int_{\Sigma} \frac{1}{2} \abs{\dt^{n} \eta }^2 + \frac{\sigma}{2} \abs{\nab \dt^{n} \eta}^2   \right] + \int_{\Omega} \hal \abs{\sg_{\a} \dt^{n} u}^2 J \\
=  \int_\Omega \gamma \Phi_3 \dt^{n} u_3 \dt^{n} u_1 J  + \int_\Sigma \gamma \dt^{n} \eta (M \n) \cdot \dt^{n} u \\
+ \int_\Omega  J(\dt^{n} u \cdot F^{1,n} + \dt^{n} p F^{2,n}) + \int_\Sigma -F^{3,n} \cdot \dt^{n} u + ( \dt^{n} \eta - \sigma \Delta \dt^{n} \eta) \left(\frac{\gamma}{2}\eta^2 \p_1 \dt^{n}  \eta + F^{4,n}  \right)
\end{multline}
where $F^{i,n}$ are defined by \eqref{nlin_F1}--\eqref{nlin_F4}.  Our goal now is to estimate the terms on right side of \eqref{td_1} and then to rewrite some of the terms on the left.

We begin by estimating the terms on the right side of \eqref{td_1}.  We handle the first two terms  by using Proposition \ref{infty_ests} and trace theory to estimate 
\begin{equation}\label{td_2}
 \abs{\int_\Omega \gamma \Phi_3 \dt^{n} u_3 \dt^{n} u_1 J  + \int_\Sigma \gamma \dt^{n} \eta (M \n) \cdot \dt^{n} u} \ls \gamma \left( \ns{\dt^n u}_0 + \norm{\dt^n \eta}_0 \norm{\dt^n u}_1  \right) \ls \gamma \sd_{n}^0.
\end{equation}
To handle the pressure term we first rewrite
\begin{equation}\label{td_3}
 \int_\Omega   \dt^{n} p J F^{2,n} = \frac{d}{dt} \int_\Omega \dt^{n-1} p J F^{2,n} - \int_\Omega \dt^{n-1} p \dt(J F^{2,n}).
\end{equation}
We then use Theorem \ref{nlin_F} to estimate
\begin{equation}\label{td_4}
 \abs{\int_\Omega \dt^{n-1} p \dt(J F^{2,n})} \le \norm{\dt^{n-1} p}_0 \norm{\dt(J F^{2,n})}_{0} \ls \ks \sqrt{\se_{n}^0} \sd_{n}^\sigma.
\end{equation}
Next we use Theorem \ref{nlin_F}, Proposition \ref{infty_ests}, and trace theory allow us to estimate
\begin{multline}\label{td_5}
 \abs{ \int_\Omega  J\dt^{n} u \cdot F^{1,n} + \int_\Sigma -F^{3,n} \cdot \dt^{n} u} \ls \norm{\dt^{n} u}_1 \left( \norm{F^{1,n}}_0 +  \norm{F^{3,n}}_0 \right) \\
 \ls \ks \sqrt{\sd_{n}^0} \sqrt{\se_{n}^0 \sd_{n}^\sigma} \ls \ks \sqrt{\se_{n}^0} \sd_{n}^\sigma.
\end{multline}
We complete the analysis of the right side of \eqref{td_1} by again using Theorem \ref{nlin_F} and the usual Sobolev embeddings to bound
\begin{multline}\label{td_6}
 \abs{ \int_\Sigma ( \dt^{n} \eta - \sigma \Delta \dt^{n} \eta) \left(\frac{\gamma}{2}\eta^2 \p_1 \dt^{n}  \eta + F^{4,n}  \right) } \ls  (1+\sigma) \norm{\dt^{n} \eta}_2  \left( \gamma \norm{\eta}_{L^\infty}^2 \norm{\dt^{n} \eta}_1 + \norm{F^{4,n}}_0  \right) \\
\ls \ks \norm{\dt^{n} \eta}_2  \left(\norm{\eta}_{2}^2 \norm{\dt^{n} \eta}_1 + \norm{F^{4,n}}_0  \right)
 \ls \ks \sqrt{\sd_{n}^0} \left(\se_{n}^0 \sqrt{\sd_{n}^0} +  \sqrt{\se_{n}^0 \sd_{n}^\sigma} \right) \\
\ls \ks \sqrt{\se_{n}^0} \sd_{n}^\sigma.
\end{multline}

Next we rewrite some of the terms on the left side of \eqref{td_1}.  First, Proposition \ref{infty_ests} allows us to bound
\begin{equation}\label{td_7}
\hal \int_\Omega \abs{\sg \dt^n u}^2 \le  \int_{\Omega} \hal \abs{\sg_{\a} \dt^{n} u}^2 J + C \sqrt{\se_{n}^0} \sd_{n}^0.
\end{equation}
Second, we rewrite 
\begin{equation}\label{td_8}
 \int_\Omega \hal \abs{\dt^n u}^2 J = \int_\Omega \hal \abs{\dt^n u}^2 + \int_\Omega \hal \abs{\dt^n u}^2 (J-1)
\end{equation}

The estimate \eqref{td_0} now follows by combining \eqref{td_1}--\eqref{td_8}.
\end{proof}

Our next result provides energy-dissipation estimates for all derivatives besides the highest order temporal ones.

\begin{thm}\label{space_derivative_est}
Assume that $\se_{n}^\sigma \le \delta$ for the universal $\delta \in (0,1)$ given by Proposition \ref{infty_ests}.
    Let $\alpha \in \mathbb{N}^{1+2}$ be such that $\abs{\alpha} \le 2n$ and $\alpha_0 < n$.  Then for $\seb_\alpha^\sigma$ and $\sdb_\alpha$  given by \eqref{sb_alpha_def} we have the estimate
\begin{equation}\label{sd_0}
 \frac{d}{dt} \seb_\alpha^\sigma + \sdb_\alpha \ls \gamma \sd_{n}^0 + \ks \left[ \sqrt{\se_{n}^0 } \sd_{n}^\sigma  +   \sqrt{  \sd_{n}^\sigma \k \f_{n}} \right].
\end{equation}

\end{thm}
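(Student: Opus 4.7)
The strategy is to parallel the proof of Theorem \ref{time_derivative_est}, but work in the flattened linearization \eqref{linear_flattened} rather than the geometric form. The constraint $\alpha_0 < n$ guarantees that at most $\dt^{n-1} p$ and $\dt^{n}u$ appear, which are controlled by $\sd_n^\sigma$. Moreover, because $s$ and $s'$ depend only on $x_3$, while $\diverge$, $S(\cdot,\cdot)$, and $\Delta$ have constant coefficients, the operator $\p^\alpha = \dt^{\alpha_0}\p_1^{\alpha_1}\p_2^{\alpha_2}$ (all derivatives horizontal or temporal) commutes with every term in the flattened equations. Hence $(v,q,\zeta) = (\p^\alpha u, \p^\alpha p, \p^\alpha\eta)$ solves \eqref{linear_flattened} with data $\Theta^i = \p^\alpha G^i$, and Proposition \ref{lin_evolve_flat} yields
\begin{multline*}
\frac{d}{dt}\seb_\alpha^\sigma + \sdb_\alpha = \gamma\int_\Omega x_3 \p^\alpha u_3\,\p^\alpha u_1 + \gamma\int_\Sigma \p^\alpha\eta\,\p^\alpha u_1 \\
+ \int_\Omega \bigl(\p^\alpha u\cdot\p^\alpha G^1 + \p^\alpha p\,\p^\alpha G^2\bigr) + \int_\Sigma \bigl(-\p^\alpha G^3\cdot\p^\alpha u + (\p^\alpha\eta-\sigma\Delta\p^\alpha\eta)\p^\alpha G^4\bigr).
\end{multline*}
The shear source terms are bounded directly by $\gamma\sd_n^0$ using Cauchy--Schwarz and trace theory, since $\|\p^\alpha u\|_1^2 + \|\p^\alpha\eta\|_0^2 \ls \sd_n^0$.

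For the interior forcing I would exploit the fact that the spatial multi-index $\bar\alpha = (\alpha_1,\alpha_2)$ satisfies $|\bar\alpha| \le 2n - 2\alpha_0$. When $|\bar\alpha|\ge 1$ one distributes a single horizontal derivative from $\p^\alpha G^i$ onto the $u$ or $p$ factor; this is legitimate because horizontal derivatives are tangential to $\Sigma$ and $\Sigma_b$ and the domain is horizontally periodic, so no boundary contributions appear. The resulting pairings $\|\dt^{\alpha_0} u\|_{|\bar\alpha|+1}\cdot\|\dt^{\alpha_0} G^1\|_{|\bar\alpha|-1}$ and $\|\dt^{\alpha_0} p\|_{|\bar\alpha|}\cdot\|\dt^{\alpha_0} G^2\|_{|\bar\alpha|}$ (or lower-derivative versions when $|\bar\alpha|=0$) sit exactly inside the norms controlled by $\sd_n^\sigma$ and by the estimate \eqref{n_G_00} of Theorem \ref{nlin_G}. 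Writing the product bound as $\sqrt{\sd_n^\sigma}\sqrt{P(\sigma,\gamma)(\se_n^0\sd_n^\sigma+\k\f_n)}$ and using $\sqrt{a+b}\le\sqrt a+\sqrt b$ delivers contributions of the form $\ks\sqrt{\se_n^0}\sd_n^\sigma + \ks\sqrt{\sd_n^\sigma\,\k\,\f_n}$, with the polynomial in $\sigma,\gamma$ absorbed into $\ks$.

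The two boundary integrals are treated separately. For $\int_\Sigma \p^\alpha G^3\cdot\p^\alpha u$ I would apply a dual trace estimate of the form $\|\p^\alpha u\|_{H^{1/2}(\Sigma)}\cdot\|\p^\alpha G^3\|_{H^{-1/2}(\Sigma)}$, moving tangential derivatives as needed so that the $G^3$-factor appears in a norm on the list in \eqref{n_G_00}; this yields the same admissible right-hand-side structure. The pair involving $G^4$ splits into $\int_\Sigma \p^\alpha\eta\,\p^\alpha G^4$, estimated by the mechanism of \eqref{eta es}, and the $\sigma$-piece $\int_\Sigma \sigma\Delta\p^\alpha\eta\,\p^\alpha G^4$, which is the most delicate and is handled by the mechanism of \eqref{eta es2}; in both one integrates the tangential Laplacian (or a tangential derivative) by parts onto $G^4$ so that the regularity balances against $\sd_n^\sigma$, $\k$, and $\f_n$. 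Combining every bound produces \eqref{sd_0}.

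The main obstacle will be the last integral $\int_\Sigma \sigma\Delta\p^\alpha\eta\,\p^\alpha G^4$: one must simultaneously (i) close the half-derivative gap in $\eta$ by integrating $\Delta$ onto $G^4$, (ii) keep the $\sigma$-dependence of the constant under control so that only $\ks$ (bounded as $\sigma\to 0$) appears and not $\cs$, and (iii) correctly channel the highest-norm factor $\f_n$ into the product $\sqrt{\sd_n^\sigma\,\k\,\f_n}$ via the kinematic contribution to $G^4$. This is precisely the content of Proposition \ref{spatial_int_est}, and is why that proposition is stated in the specific form that generates the second term on the right-hand side of \eqref{sd_0}.
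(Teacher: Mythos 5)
Your proposal takes essentially the same route as the paper: apply Proposition \ref{lin_evolve_flat} to $(\p^\alpha u,\p^\alpha p,\p^\alpha\eta)$ (legitimate since $\p^\alpha$ commutes with every constant-coefficient operator and with $s,s'$), bound the shear source terms by Cauchy--Schwarz and trace theory, redistribute one tangential derivative in the interior pairings and use an $H^{1/2}$--$H^{-1/2}$ duality on $\Sigma$ together with Theorem \ref{nlin_G}, and invoke Proposition \ref{spatial_int_est} for the $G^4$ boundary integral. The one refinement the paper adds is a case split reserving Proposition \ref{spatial_int_est} (stated only for pure-spatial $\alpha$ at top order) for $\abs{\alpha}=2n$, $\alpha_0=0$; for all remaining $\alpha$ the estimate \eqref{n_G_00} applied directly to $\p^\alpha G^4$ already closes the bound, so the delicate integration-by-parts mechanism is only needed in the critical case.
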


\begin{proof}

We begin by applying Proposition \ref{lin_evolve_flat} to see that 
\begin{multline}\label{sd_1}
 \frac{d}{dt} \seb_\alpha^\sigma + \sdb_{\alpha} = \int_\Omega \gamma x_3 \pa u_3 \pa u_1 + \int_\Sigma \gamma \pa\eta \pa u_1 
+ \int_\Omega  \pa u \cdot \pa G^1 + \pa p \pa G^2 \\
+ \int_\Sigma -\pa G^3 \cdot \pa u +( \pa \eta - \sigma \Delta \pa \eta) \pa G^4.  
\end{multline}
We will now estimate all of the terms appearing on the right side of \eqref{sd_1}.  We begin with the first two terms.  For these we use trace theory to bound
\begin{multline}\label{sd_1_2}
 \abs{\int_\Omega \gamma x_3 \pa u_3 \pa u_1 + \int_\Sigma \gamma \pa\eta \pa u_1} \ls \gamma \left( \ns{u}_{2n} + \norm{\eta}_{2n-1/2}\norm{u}_{H^{2n+1/2}(\Sigma)} \right) \\
 \ls \gamma \left( \ns{u}_{2n} + \norm{\eta}_{2n-1/2}\norm{u}_{2n+1} \right) \le \gamma \sd_{n}^0.
\end{multline}
In order to estimate the remaining terms on the right side of \eqref{sd_1} we will break to cases based on $\alpha$.

\emph{Case 1 -- Pure spatial derivatives of highest order}  

First assume that $\abs{\alpha}=2n$ and $\alpha_0 = 0$, i.e. $\pa$ is purely spatial derivatives of the highest order.  It will be convenient to write $\alpha = \beta + \delta$ for $\abs{\beta} =1$.  This then allows us to bound the $G^1$ term on the right side of \eqref{sd_1} by integrating by parts and employing \eqref{n_G_00} of Theorem \ref{nlin_G}:
\begin{equation}\label{sd_2}
\abs{ \int_\Omega  \pa u \cdot \pa G^1 } = \abs{\int_\Omega \p^{\beta + \alpha} u \cdot \p^\delta G^2 } \le \norm{u}_{2n+1} \norm{G^1}_{2n-1} \ls \ks \sqrt{\sd_{n}^0} \sqrt{\se_{n}^0 \sd_{n}^\sigma + \k \f_{n}}.
\end{equation}
We also use Theorem \ref{nlin_G} to estimate the $G^2$ and $G^3$ terms:
\begin{equation}\label{sd_3}
 \abs{\int_\Omega    \pa p \pa G^2} \le \norm{p}_{2n} \norm{G^2}_{2n} \ls \ks \sqrt{\sd_{n}^0} \sqrt{\se_{n}^0 \sd_{n}^\sigma + \k \f_{n}}
\end{equation}
and
\begin{equation}\label{sd_4}
 \abs{\int_\Sigma \pa G^3 \cdot \pa u} \le \norm{\pa u}_{H^{1/2}(\Sigma)} \norm{\pa G^3}_{H^{-1/2}(\Sigma)} \ls \norm{\pa u}_1 \norm{G^3}_{2n-1/2} \ls \ks \sqrt{\sd_{n}^0} \sqrt{\se_{n}^0 \sd_{n}^\sigma + \k \f_{n}}.
\end{equation}
The $G^4$ term is more delicate and must be handled with Proposition \ref{spatial_int_est}, which shows that 
\begin{equation}\label{sd_5}
\abs{\int_\Sigma ( \pa \eta - \sigma \Delta \pa \eta) \pa G^4 } \ls \ks \left[ \sqrt{\se_{n}^0 \sd_{n}^0 \sd_{n}^\sigma}  +   \sqrt{  \sd_{n}^\sigma \k \f_{n}} \right].
\end{equation}

We now combine  \eqref{sd_2}--\eqref{sd_5} with \eqref{sd_1} and \eqref{sd_1_2} to deduce that for $\abs{\alpha}=2n$ and $\alpha_0 =0$ we have that 
\begin{equation}\label{sd_7}
 \frac{d}{dt} \seb_\alpha^\sigma + \sdb_\alpha \ls \gamma \sd_{n}^0 + \ks \left[ \sqrt{\se_{n}^0 \sd_{n}^0 \sd_{n}^\sigma}  +   \sqrt{  \sd_{n}^\sigma \k \f_{n}} \right],
\end{equation}
which easily yields \eqref{sd_0} in this case.

\emph{Case 2 -- Everything else}  

We now consider the remaining cases, i.e. either $\abs{\alpha} \le 2n-1$ or else $\abs{\alpha} = 2n$ and $1\le \alpha_0 < n$.  In this case the $G^1,$ $G^2,$ and $G^3$ terms may be handled with  Theorem \ref{nlin_G} as in the Case 1 analysis above.  This  provides us with the bound
\begin{equation}\label{sd_8}
\abs{ \int_\Omega  \pa u \cdot \pa G^1 } +   \abs{\int_\Omega    \pa p \pa G^2} +  \abs{\int_\Sigma \pa G^3 \cdot \pa u} \ls \ks  \sqrt{\sd_{n}^\sigma} \sqrt{\se_{n}^0 \sd_{n}^\sigma + \k \f_{n}}.
\end{equation}
In this case the $G^4$ term does not require Proposition \ref{spatial_int_est} and may be estimated directly with  \eqref{n_G_00}:
\begin{equation}\label{sd_9}
\abs{\int_\Sigma ( \pa \eta - \sigma \Delta \pa \eta) \pa G^4 } \ls  \ks  \sqrt{\sd_{n}^\sigma} \sqrt{\se_{n}^0 \sd_{n}^\sigma + \k \f_{n}}.
\end{equation}
We may then combine \eqref{sd_8} and \eqref{sd_9} with \eqref{sd_1} and \eqref{sd_1_2} to deduce that when  either $\abs{\alpha} \le 2n-1$ or else $\abs{\alpha} = 2n$ and $1\le \alpha_0 < n$ we have the estimate \eqref{sd_0}.

\end{proof}

By combining Theorems \ref{time_derivative_est} and \ref{space_derivative_est} we get the following synthesized result.

\begin{thm}\label{energy_dissipation_est}
Assume that $\se_{n}^\sigma \le \delta$ for the universal $\delta \in (0,1)$ given by Proposition \ref{infty_ests}.
  We have the estimate
\begin{equation}\label{ed_0}
\frac{d}{dt} \left( \seb_n^\sigma  + \mathcal{H}_n \right) + \sdb_n  \ls \gamma \sd_{n}^0 +
\ks \left[ \sqrt{\se_{n}^0 } \sd_{n}^\sigma  +   \sqrt{  \sd_{n}^\sigma \k \f_{n}} \right]
\end{equation}
where 
\begin{equation}
 \mathcal{H}_n = \int_\Omega - \dt^{n-1} p  F^{2,n} J +  \hal \abs{\dt^n u}^2(J-1).
\end{equation}
\end{thm}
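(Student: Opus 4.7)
The plan is to obtain \eqref{ed_0} simply by summing the estimates already proved in Theorems \ref{time_derivative_est} and \ref{space_derivative_est} over all multi-indices $\alpha \in \mathbb{N}^{1+2}$ with $|\alpha| \le 2n$. Indeed, comparing with the definitions \eqref{E_bar_def}, \eqref{D_bar_def}, and \eqref{sb_alpha_def}, we immediately see that
\begin{equation}
\seb_n^\sigma = \sum_{|\alpha| \le 2n} \seb_\alpha^\sigma \qquad \text{and} \qquad \sdb_n = \sum_{|\alpha| \le 2n} \sdb_\alpha,
\end{equation}
so that the sought estimate will follow at once from a telescoping sum of \eqref{td_0} and \eqref{sd_0}, provided only that every admissible $\alpha$ is covered by exactly one of the two theorems.

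First I would verify the case split. Under the parabolic counting convention $|\alpha| = 2\alpha_0 + \alpha_1 + \alpha_2$, the constraint $|\alpha| \le 2n$ forces $\alpha_0 \le n$, and the only $\alpha$ with $\alpha_0 = n$ is the multi-index $\alpha = (n,0,0)$ addressed by Theorem \ref{time_derivative_est}. Every other admissible $\alpha$ satisfies $\alpha_0 < n$ and hence falls in the scope of Theorem \ref{space_derivative_est}. Since $\{\alpha : |\alpha| \le 2n\}$ is finite and depends only on $n$, summing the two estimates introduces only a universal multiplicative constant that can be absorbed into the $\lesssim$ notation.

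Next I would add the inequalities. Only the case $\alpha = (n,0,0)$ contributes the correction $\mathcal{H}_n$, so the left side of the summed estimate is exactly $\frac{d}{dt}(\seb_n^\sigma + \mathcal{H}_n) + \sdb_n$, as desired. On the right, the bound in \eqref{sd_0},
\begin{equation}
\gamma \sd_n^0 + \ks\bigl[\sqrt{\se_n^0}\,\sd_n^\sigma + \sqrt{\sd_n^\sigma\, \k\, \f_n}\bigr],
\end{equation}
dominates the bound in \eqref{td_0}, since the term $\ks\sqrt{\se_n^0}\,\sd_n^\sigma$ is already present. Thus summing yields precisely \eqref{ed_0}.

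I do not expect any real obstacle here; the substance of the proof is already contained in Theorems \ref{time_derivative_est} and \ref{space_derivative_est}, and the present statement is a packaging step. The only minor care point is the bookkeeping just described: confirming the exhaustive case split via the parabolic counting and checking that the dominating bound is the one coming from \eqref{sd_0}.
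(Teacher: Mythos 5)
Your proposal is exactly the paper's argument: the paper offers no explicit proof of Theorem \ref{energy_dissipation_est} beyond the introductory sentence ``By combining Theorems \ref{time_derivative_est} and \ref{space_derivative_est} we get the following synthesized result,'' and your sum over $\alpha$ with the exhaustive case split (the unique $\alpha$ with $\alpha_0 = n$ is $(n,0,0)$, everything else has $\alpha_0 < n$) is precisely that combination.

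One small bookkeeping point you should fix: it is not literally true that $\seb_n^\sigma = \sum_{|\alpha|\le 2n}\seb_\alpha^\sigma$ and $\sdb_n = \sum_{|\alpha|\le 2n}\sdb_\alpha$. Comparing \eqref{E_bar_def}--\eqref{D_bar_def} with \eqref{sb_alpha_def}, the per-$\alpha$ functionals carry factors of $\frac{1}{2}$ that the aggregate functionals do not, so in fact $\seb_n^\sigma = 2\sum_\alpha\seb_\alpha^\sigma$ and $\sdb_n = 2\sum_\alpha\sdb_\alpha$. Thus the raw sum produces $\frac{d}{dt}\left(\frac{1}{2}\seb_n^\sigma + \mathcal{H}_n\right) + \frac{1}{2}\sdb_n$ on the left, and after multiplying through by $2$ the statement one actually obtains has $2\mathcal{H}_n$ in place of $\mathcal{H}_n$. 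This is harmless in every subsequent use since $\mathcal{H}_n$ enters only through the bound $|\mathcal{H}_n|\ls(\se_n^0)^{3/2}$ of Proposition \ref{Hn_estimate}, which is insensitive to the constant, and it reflects a minor notational looseness already present in the paper. But as written your claim of exact equality is off by that factor of two, and you should either absorb it explicitly or observe that the estimate with $2\mathcal{H}_n$ is all that is needed downstream.
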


\subsection{Comparison estimates  }

Our goal now is to show that the full energy and dissipation, $\se_{n}^\sigma$ and $\sd_{n}^\sigma$, can be controlled by their horizontal counterparts, $\seb_{n}^\sigma$ and $\sdb_{n}$, up to some error terms that can be made small.  We begin with the result for the dissipation.

\begin{thm}\label{dissipation_comparision}
Assume that $\se_{n}^\sigma \le \delta$ for the universal $\delta \in (0,1)$ given by Proposition \ref{infty_ests}.
  Write 
\begin{multline}
 \y_n = \sum_{j=0}^{n-1}  \ns{ \dt^j  G^1}_{2n-2j-1}  +  \ns{ \dt^j G^2}_{2n -2j} +
 \ns{\dt^j  G^3}_{2n-2j -1/2}  \\
+  \ns{ G^4}_{2n-1/2} + \sigma^2 \ns{G^4}_{2n+1/2}  + \ns{\dt G^4}_{2n-2} + \sigma^2 \ns{\dt G^4}_{2n-3/2} + \sum_{j=2}^{n} \ns{\dt^j  G^4}_{2n-2j+1/2}
\end{multline}
and recall that $\sd_{n}^\sigma$ is defined by \eqref{D_def}.  Then we have the estimate 
\begin{equation}\label{dc_0}
 \sd_{n}^\sigma \ls \ks(\y_n + \sdb_{n}).
\end{equation}
\end{thm}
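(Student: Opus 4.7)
\textbf{Plan for proving Theorem~\ref{dissipation_comparision}.}  The strategy is a downward induction on the order $j$ of temporal differentiation, alternating two ingredients: elliptic regularity for a Stokes system and elliptic regularity for a capillary-type boundary problem, both recorded in Appendix~\ref{app_elliptics}. The base case is Korn's inequality: since $u|_{\Sigma_b}=0$ and $\sdb_n$ controls $\|\sg \dt^n u\|_0^2$, we obtain $\|\dt^n u\|_1^2 \ls \sdb_n$. Applying $\dt^n$ to the kinematic boundary condition in \eqref{linear_flattened} and invoking trace theory and Theorem~\ref{nlin_G} then bounds $\|\dt^{n+1}\eta\|_{1/2}^2$ in terms of $\sdb_n$, $\|\dt^n\eta\|_{3/2}^2$, and $\y_n$.

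For each $j = n-1, n-2,\dots,0$, I would apply $\dt^j$ to the flattened form \eqref{linear_flattened} and interpret it as a stationary Stokes system for the pair $(\dt^j u, \dt^j p)$. The forcing consists of $\dt^{j+1}u$ (controlled at the previous inductive level), the linear shear terms $s\p_1 \dt^j u + s' \dt^j u_3\, e_1$ (absorbable into lower-order $u$-norms), the various $\dt^j G^i$ (bounded by $\y_n$ via Theorem~\ref{nlin_G}), and boundary data $\dt^j[(\eta-\sigma\Delta\eta) e_3 - \gamma\eta\, M e_3 + G^3]$. The Stokes estimate then yields
\begin{equation*}
\|\dt^j u\|_{2n-2j+1}^2 + \|\dt^j p\|_{2n-2j}^2 \ls \|\dt^{j+1}u\|_{2n-2j-1}^2 + (1+\gamma^2)\|\dt^j\eta\|_{2n-2j+1/2}^2 + \sigma^2\|\dt^j\eta\|_{2n-2j+3/2}^2 + \y_n.
\end{equation*}
To control the $\eta$-terms on the right, I would read the normal component of the dynamic boundary condition as a capillary problem: $\dt^j\eta - \sigma \Delta \dt^j\eta = \dt^j p|_\Sigma - 2\p_3 \dt^j u_3|_\Sigma + (\text{trace of } \dt^j G^3)$. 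The capillary elliptic estimate then converts $u$- and $p$-traces into the required $\dt^j\eta$-norms with the correct $\sigma$-weighting, closing the alternation between the Stokes and capillary estimates as $j$ descends.

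The remaining terms in $\sd_n^\sigma$ that carry irregular regularity weights, namely the extra $\|\dt\eta\|_{2n-1}^2$, $\|\dt^2\eta\|_{2n-2}^2$, and $\|\dt^j\eta\|_{2n-2j+5/2}^2$ for $3 \le j \le n+1$, would be treated separately by applying $\dt^{j-1}$ to the kinematic boundary condition and using trace theory to convert $\dt^{j-1}u$-estimates (available from the induction) into $\dt^j \eta$-estimates, with the errors absorbed into $\y_n$. Collecting all bounds and summing yields the stated inequality, with the smallness of $\se_n^\sigma$ used only to absorb cubic-type errors that come from the nonlinear $\a$-dependence hidden in the transition from \eqref{ns_geometric} to \eqref{linear_flattened}.

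\textbf{Main obstacle.}  The principal difficulty is tracking the $\sigma$-dependence so that the constant that appears is genuinely of type $\ks$, i.e.\ remains \emph{bounded} as $\sigma \to 0$, rather than the weaker $\cs$ (which may blow up). The capillary estimate is not inherently of this form: the inverse operator $(I-\sigma\Delta)^{-1}$ exhibits different scalings in different norms. The key point is that the quantities in $\sd_n^\sigma$ are precisely calibrated to what the capillary estimate produces: the unweighted $\|\dt^j\eta\|_{2n-2j-1/2}^2$ corresponds to the trivial $\sigma=0$ identity $\eta = p - 2\p_3 u_3 + \cdots$, while the $\sigma^2 \|\dt^j\eta\|_{2n-2j+3/2}^2$ weight exactly absorbs the extra regularity gain afforded by surface tension. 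A secondary technical issue is coordinating the inductive bookkeeping so that the boundary data $(1+\gamma^2)\|\dt^j\eta\|_{...+1/2}^2 + \sigma^2\|\dt^j\eta\|_{...+3/2}^2$ appearing on the right of the Stokes estimate is exactly what the capillary estimate controls, without any derivative gap or $\sigma$-mismatch at either endpoint $\sigma=0$ or $\sigma=1$.
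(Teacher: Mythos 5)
Your plan uses the wrong elliptic solver for the Stokes step, and this creates a circularity that cannot be broken. You propose to interpret $\dt^j$ of \eqref{linear_flattened} as a Stokes system with \emph{stress} boundary conditions, so that the boundary data contains $\dt^j[(\eta-\sigma\Delta\eta)e_3 - \gamma\eta M e_3]$, and then to control the resulting $\eta$-terms on the right by applying the capillary estimate to $\dt^j\eta - \sigma\Delta\dt^j\eta = \dt^j p - 2\p_3\dt^j u_3 - \dt^j G^3\cdot e_3$. But the two estimates are exactly inverse to each other at the same level $j$: stress-Stokes (Theorem~\ref{stokes_stress_est}) gives $\|\dt^j u\|_{2n-2j+1}^2 + \|\dt^j p\|_{2n-2j}^2 \ls \|\dt^j\eta\|_{2n-2j-1/2}^2 + \sigma^2\|\dt^j\eta\|_{2n-2j+3/2}^2 + \cdots$, while the capillary estimate gives $\|\dt^j\eta\|_{2n-2j-1/2}^2 + \sigma^2\|\dt^j\eta\|_{2n-2j+3/2}^2 \ls \|\dt^j p\|_{2n-2j}^2 + \|\dt^j u\|_{2n-2j+1}^2 + \cdots$. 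The downward induction on $j$ does not help here because the capillary estimate needs $\dt^j p$ and $\dt^j u$, not $\dt^{j+1}$-quantities, so there is no inductive foothold. Substituting one estimate into the other produces a vacuous bound of the form $X\ls X + \y_n$. None of the boundary $\eta$-terms are a priori controlled by $\sdb_n$ (which only contains $\|\sg\p^\alpha u\|_0^2$), so there is no other way to seed the iteration within your framework.

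The paper breaks this circularity by using the Stokes problem with \emph{Dirichlet} boundary conditions (Theorem~\ref{stokes_dirichlet_est}), where the boundary datum is $\dt^j u|_\Sigma$ itself. The key observation you are missing is that Korn's inequality applied to \emph{all} horizontal space-time derivatives $\p^\alpha u$ with $|\alpha|\le 2n$ (not only $\dt^n u$ as in your base case), combined with trace theory, already gives $\sum_{j=0}^n \|\dt^j u\|_{H^{2n-2j+1/2}(\Sigma)}^2 \ls \sdb_n$ before any elliptic regularity is invoked. This makes the Dirichlet Stokes step entirely independent of $\eta$: it yields $\sum_j\|\dt^j u\|_{2n-2j+1}^2 + \|\nab\dt^j p\|_{2n-2j-1}^2 \ls \y_n + \sdb_n$ with no $\eta$ on the right. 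Only \emph{after} $u$ and $\nab p$ are controlled does one turn to the dynamic boundary condition: applying $\p_i\dt^j$ and then the capillary estimate (the horizontal derivative $\p_i$ is essential because only $\nab p$, not $p$, is available at that point), followed by a Poincar\'e inequality and the kinematic condition for the remaining time derivatives, yields the $\eta$-portion of $\sd_n^\sigma$. The $\gamma^2\|\dt^j\eta\|_{2n-2j-1/2}^2$ part, which your plan does not address, comes separately from the $e_1$-component of the stress boundary condition, $\gamma\eta = G^3\cdot e_1 + \p_1 u_3 + \p_3 u_1$. Finally, $\|p\|$ itself is recovered from $\|\nab p\|$ together with the trace of $p$ on $\Sigma$ read off from the boundary condition, plus a Poincar\'e-type inequality. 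Your discussion of the $\sigma$-calibration of the capillary estimate is correct in spirit, but the alternation you propose to use it in does not close.
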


\begin{proof}
 We divide the proof into several steps.

\emph{Step 1 - Application of Korn's inequality}

Korn's inequality tells us that 
\begin{equation}\label{dc_1}
 \sum_{\substack{\alpha \in \mathbb{N}^{1+2} \\\abs{\alpha} \le 2n }} \ns{ \p^\alpha u}_1 \ls \sdb_{n}.
\end{equation}
Since $\p_1$ and $\p_2$ account for all spatial differential operators on $\Sigma$, we deduce from standard trace estimates that 
\begin{equation}\label{dc_2}
 \sum_{j=0}^n \ns{\dt^j u}_{H^{2n-2j+1/2}(\Sigma)} \ls  \sum_{\substack{\alpha \in \mathbb{N}^{1+2} \\\abs{\alpha} \le 2n }} \ns{ \p^\alpha u}_{H^{1/2}(\Sigma)} \ls \sdb_{n}.
\end{equation}

\emph{Step 2 - Elliptic estimates for the Stokes problem}

With \eqref{dc_2} in hand we can now use the elliptic theory associated to the Stokes problem to gain control of the velocity field and the pressure.  For $j=0,\dotsc,n-1$ we have that $(\dt^j u, \dt^j p, \dt^j \eta)$ solve the PDE 
\begin{equation}
\begin{cases}
 s \p_1 \dt^j u  + \diverge S (\dt^j p,\dt^j u) = \dt^j G^1 -  \dt^{j+1} u - s' \dt^j u_3 e_1  & \text{in }\Omega \\
 \diverge \dt^j u = \dt^j G^2 & \text{in } \Omega \\
 \dt^j u = \dt^j u \vert_{\Sigma}  & \text{on } \Sigma \\
 \dt^j u = 0 & \text{on } \Sigma_b.
\end{cases}
\end{equation}
We may then apply the elliptic estimates of Theorem \ref{stokes_dirichlet_est} to bound
\begin{multline}
 \ns{\dt^{n-1} u}_{3} + \ns{\nab \dt^{n-1} p}_1 \ls \ns{\dt^{n-1} G^1 }_1 +\ns{  \dt^{n} u }_1 + \ns{s' \dt^{n-1} u_3 e_1}_1 + \ns{\dt^{n-1} G^2}_2 + \ns{\dt^{n-1} u}_{H^{5/2}(\Sigma)} \\
 \ls \y_n + \sdb_n.
\end{multline}
The control of $\dt^{n-1} u$ provided by this bound then allows us to control $\dt^{n-2} u$ in a similar manner.  We thus proceed iteratively with Theorem \ref{stokes_dir_prob},  counting down from $n-1$ temporal derivatives to $0$ temporal derivatives, in order to deduce the estimate 
\begin{equation}\label{dc_3}
 \sum_{j=0}^{n-1} \ns{\dt^j u}_{2n-2j+1} + \ns{\nab \dt^j p}_{2n-2j-1} \ls \y_n + \sdb_{n}.
\end{equation}

\emph{Step 3 - Free surface function estimates}

Next we derive estimates for the free surface function.  We begin by isolating the dynamic boundary condition on $\Sigma$ to write 
\begin{equation}\label{dc_4}
\eta - \sigma \Delta \eta = S(p,u) e_3 \cdot e_3 -  G^3 \cdot e_3 = p - 2 \p_3 u_3   - G^3 \cdot e_3. 
\end{equation}
For $i = 1,2$ and $j=0,\dotsc,n-1$ we apply $\p_i \dt^j$ to see that 
\begin{equation}
\p_i \dt^j \eta - \sigma \Delta \p_i \dt^j \eta = \p_i \dt^j p - 2 \p_3 \p_i \dt^j u_3   - \p_i \dt^j G^3 \cdot e_3. 
\end{equation}
We then use this in Theorem \ref{cap_elliptic} and employ \eqref{dc_3} to see that 
\begin{multline}\label{dc_5}
  \ns{\p_i \dt^j \eta}_{2n-2j-3/2} + \sigma^2 \ns{\p_i \dt^j \eta}_{2n-2j+1/2} \ls \ns{\p_i \dt^j p - 2 \p_3 \p_i \dt^j u_3   - \p_i \dt^j G^3 \cdot e_3}_{H^{2n-2j-3/2}(\Sigma)} \\
\ls \ns{\nab \dt^j p}_{2n-2j-1} + \ns{\dt^ju}_{2n-2j+1} + \ns{\dt^j G^3}_{2n-2j-1/2} \ls \y_n + \sdb_{n}.
\end{multline}
We know that $\dt^j \eta$ has zero average over $\Sigma$ due to \eqref{avg_prop}, and so the Poincar\'{e} inequality on $\Sigma$ and \eqref{dc_5} then imply that
\begin{equation}\label{dc_6}
\sum_{j=0}^{n-1} \ns{\dt^j \eta}_{2n-2j-1/2} + \sigma^2 \ns{\dt^j \eta}_{2n-2j+3/2} \ls \sum_{j=0}^{n-1}\sum_{i=1}^2  \ns{\p_i\dt^j \eta}_{2n-3/2} + \sigma^2 \ns{\p_i \dt^j \eta}_{2n+1/2} \ls \y_n + \sdb_{n}.
\end{equation}

Next we estimate $\dt^j \eta$ for $j=1,\dotsc,n+1$ by employing the kinematic boundary condition
\begin{equation}
 \dt^{j+1} \eta = \dt^j u_3 -(\gamma/2) \p_1 \dt^j \eta + \dt^j G^4.
\end{equation}
We first use this and \eqref{dc_6} to bound
\begin{multline}\label{dc_7}
 \ns{\dt \eta}_{2n-1} \ls \ns{ u_3}_{H^{2n-1}(\Sigma)} + \gamma^2 \ns{\eta}_{2n} + \ns{ G^4}_{2n-1} \ls \ns{ u}_{2n-1/2} + \gamma^2 \ns{\eta}_{2n} + \y_n \\
 \ls \ks( \y_n + \sdb_{n})
\end{multline}
and then we multiply by $\sigma^2$ in order to derive the similar estimate
\begin{multline}\label{dc_8}
 \sigma^2 \ns{\dt \eta}_{2n+1/2} \ls \sigma^2 \ns{ u_3}_{H^{2n+1/2}(\Sigma)} + \gamma^2 \sigma^2 \ns{\eta}_{2n+3/2} + \sigma^2 \ns{ G^4}_{2n+1/2} \ls \sigma^2 \ns{u}_{2n+1} + \gamma^2 \sigma^2 \ns{\eta}_{2n+3/2} + \y_n \\
 \ls \ks( \y_n + \sdb_{n}). 
\end{multline}
Next we use a similar argument to control $\dt^2 \eta$:
\begin{multline}\label{dc_7_2}
 \ns{\dt^2 \eta}_{2n-2} \ls \ns{\dt u_3}_{H^{2n-2}(\Sigma)} + \gamma^2 \ns{\dt \eta}_{2n-1} + \ns{\dt G^4}_{2n-2} \ls \ns{\dt u}_{2n-2+1/2} + \gamma^2 \ns{\dt \eta}_{2n-1} + \y_n \\
 \ls \ks( \y_n + \sdb_{n})
\end{multline}
and
\begin{multline}\label{dc_8_2}
 \sigma^2 \ns{\dt^2 \eta}_{2n-3/2} \ls \sigma^2 \ns{\dt u_3}_{H^{2n-3/2}(\Sigma)} + \gamma^2 \sigma^2 \ns{\dt \eta}_{2n-1/2} + \sigma^2 \ns{\dt G^4}_{2n-3/2} \\
 \ls \sigma^2 \ns{\dt u}_{2n-1} + \gamma^2 \sigma^2 \ns{\dt \eta}_{2n-1/2} + \sigma^2 \y_n 
 \ls \ks( \y_n + \sdb_{n}). 
\end{multline}
With control of $\dt^2 \eta$ in hand we can iterate to obtain control of $\dt^j \eta$ for $j=3,\dotsc,n+1$.  This yields the estimate 
\begin{equation}\label{dc_9}
 \sum_{j=3}^{n+1} \ns{\dt^j \eta}_{2n-2j+5/2} \ls \ks(\y_n + \sdb_{n}).
\end{equation}

The free surface function terms remaining to control in $\sd_{n}^\sigma$ can be handled by considering the dot product of the dynamic boundary condition with $e_1$.  This reads
\begin{equation}
 \gamma \eta = [G^3 - S(p,u) e_3  ]\cdot e_1 = G^3 \cdot e_1 + \p_1 u_3 + \p_3 u_1.
\end{equation}
Applying $\dt^j$ for $j=0,\dotsc,n$ and using \eqref{dc_3} then provides us with the estimate 
\begin{equation}\label{dc_10}
 \gamma^2 \sum_{j=0}^{n-1} \ns{\dt^j \eta}_{2n-2j-1/2} \ls \sum_{j=0}^{n-1} \ns{G^3}_{2n-2j-1/2} + \ns{\dt^j u}_{H^{2n-2j+1/2}(\Sigma)} \ls \y_n + \sum_{j=0}^{n-1} \ns{\dt^j u}_{2n-2j+1} 
\ls \y_n + \sdb_{n}.
\end{equation}

Summing \eqref{dc_6}, \eqref{dc_7}--\eqref{dc_9}, and \eqref{dc_10} then shows that 
\begin{multline}\label{dc_11}
 \sum_{j=0}^{n-1}\left((1+\gamma^2) \ns{\dt^j \eta}_{2n-2j-1/2} + \sigma^2 \ns{\dt^j \eta}_{2n-2j+3/2} \right)  + \ns{\dt \eta}_{2n-1} + \sigma^2 \ns{\dt \eta}_{2n+1/2} 
\\ + \ns{\dt^2 \eta}_{2n-2} + \sigma^2 \ns{\dt^2 \eta}_{2n-3/2} + \sum_{j=3}^{n+1} \ns{\dt^j \eta}_{2n-2j+5/2} 
\ls \ks(\y_n + \sdb_{n}).
\end{multline}

\emph{Step 4 -- Improved pressure estimates}

We now return to \eqref{dc_4} with \eqref{dc_11} in hand in order to improve our estimates for the pressure.  Applying $\dt^j$ for $j=0,\dotsc,n-1$ shows that 
\begin{equation}
 \dt^j p = \dt^j \eta - \sigma \Delta \dt^j \eta + 2 \p_3 \dt^j u_3 + \dt^j G^3 \cdot e_3.
\end{equation}
We then use this with \eqref{dc_6} to bound
\begin{equation}
 \sum_{j=0}^{n-1} \ns{\dt^j p}_{H^0(\Sigma)} \ls \sum_{j=0}^{n-1} \ns{\dt^j \eta}_0 +\sigma^2 \ns{\dt^j \eta}_2 + \ns{\dt^j u}_2 + \ns{\dt^j G_3}_0 \ls \y_n + \sdb_{n}.
\end{equation}
We then apply a Poincar\'{e}-type inequality to see that
\begin{equation}
 \sum_{j=0}^{n-1} \ns{\dt^j p}_0 \ls  \sum_{j=0}^{n-1} \ns{\nab \dt^j p}_0 + \ns{\dt^j p}_{H^0(\Sigma)} \ls \y_n + \sdb_{n}.
\end{equation}
Hence 
\begin{equation}\label{dc_12}
 \sum_{j=0}^{n-1} \ns{\dt^j p}_{2n-2j} \ls  \sum_{j=0}^{n-1} \ns{\dt^j p}_{0} + \ns{\nab \dt^j p}_{2n-2j-1} \ls \y_n + \sdb_{n}.
\end{equation}

\emph{Step 5 -- Conclusion}
 
The estimate \eqref{dc_0} now follows by combining \eqref{dc_1}, \eqref{dc_3}, \eqref{dc_11}, and \eqref{dc_12}.

\end{proof}

Our next result proves the comparison result for the energy.

\begin{thm}\label{energy_comparision}
Assume that $\se_{n}^\sigma \le \delta$ for the universal $\delta \in (0,1)$ given by Proposition \ref{infty_ests}.
  Write 
\begin{equation}
\w_n = \sum_{j=0}^{n-1} \ns{ \dt^j  G^1}_{2n-2j-2}  +  \ns{ \dt^j G^2}_{2n -2j - 1} +
 \ns{\dt^j  G^3}_{2n-2j -3/2} + \ns{\dt^j G^4}_{2n-2j-1/2 }
\end{equation}
and recall that $\se_{n}^\sigma$ is defined by \eqref{E_def}.  Then we have the estimate 
\begin{equation}\label{ec_0}
 \se_{n}^\sigma \ls \ks(\w_n + \seb_{n}^\sigma).
\end{equation}
\end{thm}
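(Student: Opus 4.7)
The proof will parallel that of Theorem \ref{dissipation_comparision}, with every regularity index shifted down by one derivative. The reason is that $\seb_n^\sigma$ only controls horizontal $L^2$-in-$\Omega$ norms of $u$, whereas $\sdb_n$ controls horizontal $H^1$-in-$\Omega$ norms thanks to Korn's inequality. The scheme has three steps: extract horizontal bounds directly from $\seb_n^\sigma$, iterate Stokes elliptic theory to upgrade $u$ and $p$, and use the dynamic and kinematic boundary conditions to complete the $\eta$ and $p$ estimates.

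First, unpacking the definition of $\seb_n^\sigma$ yields $\sum_{\abs{\alpha} \le 2n} \ns{\pa u}_0 \ls \seb_n^\sigma$ together with $\ns{\dt^j \eta}_{2n-2j} + \sigma \ns{\dt^j \eta}_{2n-2j+1} \ls \seb_n^\sigma$ for each $j \le n$, since $\Sigma$ is two-dimensional and horizontal derivatives exhaust the spatial directions there. This immediately handles $\ns{\eta}_{2n}$, $\sigma \ns{\eta}_{2n+1}$, and the analogous $\dt^j \eta$ pieces at integer orders, and it supplies the base-case bound $\ns{\dt^n u}_0 \ls \seb_n^\sigma$ that launches the Stokes iteration.

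Second, for each $j = 0, \dotsc, n - 1$, the pair $(\dt^j u, \dt^j p)$ satisfies a Stokes-type problem with interior source $\dt^j G^1 - \dt^{j+1} u - s \p_1 \dt^j u - s' (\dt^j u_3) e_1$, prescribed divergence $\dt^j G^2$, Dirichlet data $\dt^j u\vert_\Sigma$, and zero data on $\Sigma_b$. I would apply Theorem \ref{stokes_dirichlet_est} at regularity level $2n - 2j - 2$ and iterate from $j = n - 1$ downward, using the base case to control the forcing at $j = n - 1$, to produce
\begin{equation*}
 \sum_{j=0}^{n-1} \bigl( \ns{\dt^j u}_{2n-2j} + \ns{\nab \dt^j p}_{2n-2j-2} \bigr) \ls \w_n + \seb_n^\sigma + \sum_{j=0}^{n-1} \ns{\dt^j u\vert_\Sigma}_{H^{2n-2j-1/2}(\Sigma)}.
\end{equation*}
The trace terms come from coupling $u\vert_\Sigma$ to $\eta$: the kinematic identity $\dt^{j+1} \eta = \dt^j u_3\vert_\Sigma - (\gamma/2) \p_1 \dt^j \eta + \dt^j G^4$ delivers $\dt^j u_3\vert_\Sigma$ from Step 1, while the tangential components of the stress balance combined with the divergence relation $\p_3 u_3 = -\p_1 u_1 - \p_2 u_2 + G^2$ allow recovery of $\dt^j u_1, \dt^j u_2$ on $\Sigma$ at the correct Sobolev level.

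To finish, I would isolate the normal component of the dynamic boundary condition, $\dt^j p = \dt^j \eta - \sigma \Delta \dt^j \eta + 2 \p_3 \dt^j u_3 + \dt^j G^3 \cdot e_3$ on $\Sigma$, combine it with the gradient bound from Step 2 and a Poincar\'{e} inequality to upgrade $\ns{\nab \dt^j p}_{2n-2j-2}$ to $\ns{\dt^j p}_{2n-2j-1}$, and then apply the capillary elliptic theory of Theorem \ref{cap_elliptic} to the same identity to obtain the half-derivative improvements $\sigma \ns{\dt \eta}_{2n-1/2}$ and $\ns{\dt^j \eta}_{2n-2j+3/2}$ for $j \ge 2$ required by $\se_n^\sigma$. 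The main obstacle is the trace step: unlike in Theorem \ref{dissipation_comparision}, where Korn gives $\dt^j u$ in horizontal $H^1$ and hence a free trace, here one must synthesize the boundary trace bound through the coupling of $u$ to $\eta$, and verify that each correction term remains within the budget $\w_n + \seb_n^\sigma$ as the Stokes iteration progresses.
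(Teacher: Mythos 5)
Your proposal has a genuine gap at the elliptic-iteration step. You propose to apply the Dirichlet version of the Stokes estimate, Theorem \ref{stokes_dirichlet_est}, and then recover the boundary trace $\dt^j u\vert_\Sigma \in H^{2n-2j-1/2}(\Sigma)$ through the kinematic and dynamic boundary conditions. Neither recovery can actually be carried out at the needed regularity. From the kinematic condition $\dt^j u_3 = \dt^{j+1}\eta + (\gamma/2)\p_1\dt^j\eta - \dt^j G^4$, Step 1 only controls $\dt^{j+1}\eta \in H^{2n-2j-2}(\Sigma)$ (or, with a $\sigma$ weight, $H^{2n-2j-1}(\Sigma)$), which is $3/2$ derivatives short of the required $H^{2n-2j-1/2}(\Sigma)$ (still $1/2$ short even with the $\sigma$ weight). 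The recovery of the tangential components is worse: the tangential stress equations on $\Sigma$ give you $\p_3 u_1 + \p_1 u_3$ and $\p_3 u_2 + \p_2 u_3$, i.e.\ normal derivatives of $u_1, u_2$, not their traces, and the divergence relation only supplies $\p_1 u_1 + \p_2 u_2$ on $\Sigma$, which cannot determine $u_1\vert_\Sigma$ and $u_2\vert_\Sigma$ separately. So the Dirichlet-based iteration cannot be launched with the budget $\w_n + \seb_n^\sigma$, and the ``main obstacle'' you flag at the end is not one you have overcome.

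The paper's proof avoids the issue by iterating with the stress-boundary-condition version of the Stokes estimate, Theorem \ref{stokes_stress_est}, for which the boundary datum is the full stress trace $S(\dt^j p, \dt^j u)e_3 = (\dt^j\eta - \sigma\Delta\dt^j\eta)e_3 - \gamma\dt^j\eta\, e_1 + \dt^j G^3$. That theorem only needs this datum in $H^{2n-2j-3/2}(\Sigma)$ (one full derivative less than the Dirichlet trace), and this is exactly what Step 1 provides: $\dt^j\eta \in H^{2n-2j}(\Sigma)$ and $\sigma\dt^j\eta \in H^{2n-2j+1}(\Sigma)$ control $\sigma\Delta\dt^j\eta$ in $H^{2n-2j-1}(\Sigma)$ up to a factor of $\sigma$ absorbed into $\ks$. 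With the stress estimate also delivering $\dt^j p \in H^{2n-2j-1}(\Omega)$ directly (no Poincar\'e step needed for $p$), the remaining $\eta$ terms in $\se_n^\sigma$ then follow from the kinematic condition as in your Step 3. Your overall architecture (Step 1 from the energy, iterate Stokes, finish with boundary conditions) is right; the choice of which boundary-value problem to iterate is the crucial detail, and the Dirichlet choice is the wrong one here because, unlike in the dissipation comparison, there is no Korn inequality giving a free trace.
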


\begin{proof}
We divide the proof into several steps.

\emph{Step 1 -- Initial free surface terms}

To begin we note that 
\begin{equation}
 \sum_{\substack{\alpha \in \mathbb{N}^{1+2} \\\abs{\alpha} \le 2n }}  \ns{\p^\alpha \eta}_{0} + \sigma \ns{ \nab \p^\alpha \eta }_{0} = \sum_{j=0}^n \ns{\dt^j \eta}_{2n-2j} + \sigma \ns{\nab \dt^j \eta}_{2n-2j}.
\end{equation}
Since $\dt^j \eta$ has vanishing average for each $j=0,\dotsc,n$ (due to  \eqref{avg_prop}) we can then use the Poincar\'{e} inequality to conclude that 
\begin{equation}\label{ec_1}
\sum_{j=0}^n \ns{\dt^j \eta}_{2n-2j} + \sigma \ns{\dt^j \eta}_{2n-2j+1} \ls  \sum_{j=0}^n \ns{\dt^j \eta}_{2n-2j} + \sigma \ns{\nab \dt^j \eta}_{2n-2j} \ls \seb_{n}^\sigma.
\end{equation}

\emph{Step 2 -- Elliptic estimates}

Next we write the PDE satisfied by $(\dt^j u, \dt^j p, \dt^j \eta)$ for $j=0,\dotsc,n-1$ as 
\begin{equation}
\begin{cases}
s \p_1 \dt^j u  + \diverge S(\dt^j p,\dt^j u)  = \dt^j G^1 - \dt^{j+1}u  & \text{in }\Omega \\
\diverge\dt^j  u = \dt^j G^2 &\text{in }\Omega \\
S(\dt^j p,\dt^j u) e_3  = (\dt^j  \eta - \sigma \Delta \dt^j  \eta)e_3 - \gamma \dt^j  \eta e_1 + \dt^j  G^3  & \text{on }  \Sigma \\
\dt^j  u =0 & \text{on } \Sigma_b.
\end{cases}
\end{equation}
This allows us to apply the elliptic estimate for the Stokes problem with stress boundary conditions, Theorem \ref{stokes_stress_est}, to bound (using \eqref{ec_1} along the way)
\begin{multline}
 \ns{\dt^{n-1} u}_2 + \ns{\dt^{n-1} p}_1 \ls \ns{\dt^{n-1} G^1}_0 + \ns{\dt^{n}u}_0 + \ns{\dt^{n-1} G^2}_1 + \ns{\dt^{n-1} \eta}_{1/2} + \sigma^2 \ns{\dt^{n-1} \eta}_{5/2} + \ns{\dt^{n-1} G^3}_{1/2} \\
\ls \w_n + \ks \seb_{n}^\sigma.
\end{multline}
We then proceed iteratively to estimate $(\dt^j u,\dt^j p)$ for $j=n-2,\dotsc,0$, employing \eqref{ec_1} and Theorem \ref{stokes_stress_est}.  This yields the bound
\begin{equation}\label{ec_2}
 \sum_{j=0}^{n-1} \ns{\dt^j u}_{2n-2j} + \ns{\dt^j p}_{2n-2j-1} \ls \w_n + \ks \seb_{n}^\sigma.
\end{equation}

\emph{Step 3 -- Improved estimates for time derivatives of the free surface function}

With the estimates \eqref{ec_2} in hand we can improve the estimates for the time derivatives of the free surface function by employing the kinematic boundary condition 
\begin{equation}
 \dt^{j+1} \eta = \dt^j u_3 -(\gamma/2) \p_1 \dt^j \eta + \dt^j G^4 \text{ for }j=0,\dotsc,n-1.
\end{equation}
This, trace theory, and \eqref{ec_1} and \eqref{ec_2} provide us with the estimates
\begin{equation}
  \ns{\dt \eta}_{2n-1}  \ls  \ns{ u }_{2n} + \gamma^2 \ns{ \eta}_{2n} + \ns{G^4}_{2n-1} \ls \w_n + \ks \seb_{n}^\sigma 
\end{equation}
and
\begin{equation}
 \sigma \ns{\dt \eta}_{2n-1/2}  \ls \sigma \ns{ u }_{2n} + \gamma^2 \sigma \ns{ \eta}_{2n+1/2} + \sigma \ns{G^4}_{2n-1/2} \ls  \ks(\w_n+ \seb_{n}^\sigma)  
\end{equation}
We then iterate this argument to control $\dt^j \eta$ for $j=2,\dotsc,n-1$.  This yields the bound
\begin{equation}
 \sum_{j=2}^{n} \ns{\dt^j \eta}_{2n-2j+3/2} \ls \ks(\w_n + \seb_{n}^\sigma). 
\end{equation}
Combining these estimates then shows that
\begin{equation}\label{ec_3}
 \ns{\dt \eta}_{2n-1} + \sigma \ns{\dt \eta}_{2n-1/2} +  \sum_{j=2}^{n} \ns{\dt^j \eta}_{2n-2j+3/2} \ls \ks(\w_n + \seb_{n}^\sigma).
\end{equation}

\emph{Step 4 -- Conclusion}

The estimate \eqref{ec_0} now follows by summing \eqref{ec_1}, \eqref{ec_2}, and \eqref{ec_3}.

\end{proof}

\section{The vanishing surface tension problem }\label{sec_nost_gwp}

In this section we complete the development of the a priori estimates for the vanishing surface tension problem and for the problem with zero surface tension.  With these estimates in hand we then prove Theorems \ref{gwp_vanish} and \ref{vanishing_st}, which establish the existence of global-in-time decaying solutions  and study the limit as surface tension vanishes.

\subsection{Preliminaries }

Here we record a simple preliminary estimate that will be quite useful in the subsequent analysis.

\begin{prop}\label{kf_ests}
For $N \ge 3$ we have that
\begin{equation}
 \k \ls \min\{\se_{N+2}^0, \sd_{N+2}^0\} \text{ and } \f_{N+2} \ls \se_{2N}^0.
\end{equation}
\end{prop}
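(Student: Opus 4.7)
The proposition is essentially an index-chasing exercise: unpack the definitions of $\k$, $\f_{N+2}$, $\se_{n}^0$, $\sd_{n}^0$, and verify that the Sobolev indices appearing in the bounded quantities are smaller than indices present in the bounding functionals. My plan is therefore just to line up the terms, invoke the standard Sobolev embedding $H^s(\Omega) \hookrightarrow C^2_b(\Omega)$ for $s>7/2$ (valid since $\dim \Omega = 3$), the trace estimate $\norm{\cdot}_{H^3(\Sigma)} \ls \norm{\cdot}_{H^{7/2}(\Omega)}$, and monotonicity of the $H^s$ scale.

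For the first bound I would examine the three pieces of $\k$ in turn. For $\ns{u}_{C^2_b(\Omega)}$ and $\ns{u}_{H^3(\Sigma)}$, both are controlled by $\ns{u}_{H^4(\Omega)}$ via the embedding and trace theorem; the energy $\se_{N+2}^0$ contains $\ns{u}_{2(N+2)} = \ns{u}_{2N+4}$ and the dissipation $\sd_{N+2}^0$ contains $\ns{u}_{2N+5}$, both comfortably controlling $\ns{u}_{4}$ since $N \ge 3$ gives $2N+4 \ge 10 \ge 4$. For $\ns{\eta}_{5/2}$, the energy contains $\ns{\eta}_{2N+4}$ and the dissipation contains (through the $j=0$ term of the $(1+\gamma^2)\ns{\dt^j\eta}_{2n-2j-1/2}$ sum, using $1 \le 1+\gamma^2$) the norm $\ns{\eta}_{2N+3.5}$; each dominates $\ns{\eta}_{5/2}$ by monotonicity when $N \ge 3$.

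For the second bound I write $\f_{N+2} = \ns{\eta}_{2(N+2)+1/2} = \ns{\eta}_{2N+9/2}$, and I observe that $\se_{2N}^0$ contains $\ns{\eta}_{4N}$ directly from its definition. The required inequality $2N + 9/2 \le 4N$ is equivalent to $N \ge 9/4$, which is exactly what the hypothesis $N \ge 3$ guarantees, so Sobolev monotonicity finishes the proof.

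There is no real obstacle here --- the only thing to be careful about is confirming that no $\sigma$- or $\gamma$-dependent factors leak into the implicit constants. The $\sigma$ terms are dropped because we are comparing against $\se_{N+2}^0$ and $\sd_{N+2}^0$ (the $\sigma=0$ versions), while the $(1+\gamma^2)$ prefactor in $\sd_{N+2}^0$ is used from below via $1 \le 1+\gamma^2$, so all constants remain universal in the sense of Section \ref{sec_notation}.
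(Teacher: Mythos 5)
Your proof is correct and follows essentially the same index-chasing argument the paper uses: Sobolev embedding and trace theory to control $\k$, then monotonicity of the $H^s$ scale to dominate by the energy and dissipation. In fact your computation $\f_{N+2}=\ns{\eta}_{2(N+2)+1/2}=\ns{\eta}_{2N+9/2}$ is a touch more careful than the paper's (which writes $\ns{\eta}_{2N+5}$, evidently a small slip), though both indices satisfy the needed inequality $\le 4N$ for $N\ge 3$.
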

\begin{proof}
The Sobolev embeddings and trace theory show that $\k \ls \ns{u}_{7/2} + \ns{\eta}_{5/2} \le \ns{u}_4 + \ns{\eta}_4$ and hence $\k \ls \se_{2}^0 \le \se_{N+2}^0$ and $\k \ls \sd_{2}^0 \le \sd_{N+2}^0.$  On the other hand, $\f_{N+2} = \ns{\eta}_{2N +5}$ and $2N + 5 \le 4N$ for integers $N \ge 3$, so $\f_{N+2} \le \se_{2N}^0$.
\end{proof}

\subsection{Transport estimate }

We now turn to the issue of establishing structured estimates for the highest derivatives of $\eta$ by appealing to the kinematic transport equation.  We begin by recording a general estimate for fractional derivatives of solutions to the transport equation, proved by Danchin \cite{danchin}.  Note that the result in \cite{danchin} is stated for $\Sigma = \R^2$, but it can be readily extended to periodic $\Sigma$ of the form we use: see for instance \cite{danchin_notes}.

\begin{lem}[Proposition 2.1 of \cite{danchin}]\label{sobolev_transport}
Let $\zeta$ be a solution to 
\begin{equation}\label{i_transport_eqn}
\begin{cases}
 \dt \zeta + w \cdot D \zeta = g & \text{in } \Sigma \times (0,T)  \\
 \zeta(t=0) = \zeta_0.
\end{cases}
\end{equation} 
Then there is a universal constant $C>0$ so that for any $0 \le s <2$
\begin{equation}
 \sup_{0\le r \le t} \norm{\zeta(r)}_{s} \le \exp\left(C \int_0^t \norm{D w(r)}_{3/2} dr \right) \left( \norm{\zeta_0}_{s} + \int_0^t \norm{g(r)}_{s}dr  \right).
\end{equation}
\end{lem}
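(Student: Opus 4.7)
The plan is to prove this via the standard Littlewood--Paley / Bony paraproduct approach to transport estimates, which is the classical route Danchin himself uses. Let $\{\Delta_q\}_{q \geq -1}$ denote a dyadic partition of unity on $\Sigma$ (adapted to the periodic setting). First I would apply $\Delta_q$ to the transport equation \eqref{i_transport_eqn} to obtain
\begin{equation*}
 \dt (\Delta_q \zeta) + w \cdot \nabla (\Delta_q \zeta) = \Delta_q g + R_q, \qquad R_q := [w\cdot\nabla, \Delta_q]\zeta,
\end{equation*}
where $R_q$ is the commutator between advection by $w$ and the frequency localization.

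Next I would perform an $L^2$ energy estimate on this frequency-localized equation: multiply by $\Delta_q \zeta$, integrate over $\Sigma$, and integrate the transport term by parts. The only surviving contribution from the transport is $\tfrac12\int(\diverge w)\abs{\Delta_q\zeta}^2$, which I bound using that $H^{3/2}(\Sigma) \hookrightarrow L^\infty(\Sigma)$ in two dimensions, so $\norm{Dw}_{L^\infty} \lesssim \norm{Dw}_{3/2}$. Dividing through by $\norm{\Delta_q \zeta}_{L^2}$ (after the standard regularization trick) produces
\begin{equation*}
 \frac{d}{dt} \norm{\Delta_q \zeta}_{L^2} \lesssim \norm{Dw}_{3/2}\norm{\Delta_q\zeta}_{L^2} + \norm{R_q}_{L^2} + \norm{\Delta_q g}_{L^2}.
\end{equation*}

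The heart of the proof, and the step I expect to be the main obstacle, is the commutator estimate
\begin{equation*}
 \norm{R_q}_{L^2} \lesssim c_q(t)\, 2^{-qs}\, \norm{Dw}_{3/2}\, \norm{\zeta}_{s}, \qquad (c_q(t))_q \in \ell^2 \text{ with } \norm{(c_q)}_{\ell^2} \lesssim 1.
\end{equation*}
To establish this I would split $w\cdot\nabla \zeta$ using Bony's decomposition into paraproducts $T_w \nabla\zeta$, $T_{\nabla\zeta} w$ and a remainder $R(w,\nabla\zeta)$, and analyze the commutator of $\Delta_q$ against each piece separately, using standard quasi-orthogonality of Littlewood--Paley blocks and the Bernstein inequalities. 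The restriction $s < 2$ enters precisely here: the paraproduct $T_{\nabla\zeta}w$ commutator requires $s+1$ to be a regularity index for which $H^{3/2}$ multiplication is bounded from above, which forces $s < 2$. Once this bound is in hand, multiplying the frequency-localized inequality by $2^{qs}$, taking the $\ell^2_q$ norm, and applying Minkowski yields
\begin{equation*}
 \frac{d}{dt} \norm{\zeta}_{s} \lesssim \norm{Dw}_{3/2}\, \norm{\zeta}_{s} + \norm{g}_{s}.
\end{equation*}

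Finally I would close the argument by applying the integral form of Gronwall's inequality to this ODE for $\norm{\zeta(t)}_{s}$. The multiplicative exponential factor $\exp(C \int_0^t \norm{Dw}_{3/2})$ arises from Gronwall with the coefficient $\norm{Dw(r)}_{3/2}$, and the additive term $\norm{\zeta_0}_{s} + \int_0^t \norm{g}_{s}$ is simply the data plus forcing. Taking the supremum over $r \in [0,t]$ on the left gives the claimed bound. The periodic domain is handled by noting that all the Littlewood--Paley and paradifferential tools on $\R^2$ transfer directly to $\Sigma = L_1\T \times L_2\T$ by working with Fourier series, a point emphasized in \cite{danchin_notes}.
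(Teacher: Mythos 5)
Your proposal reconstructs from scratch the proof of Danchin's Proposition 2.1 via Littlewood--Paley localization, a Bony-paraproduct commutator estimate, and Gronwall --- which is indeed Danchin's own route. The paper, however, does not re-prove the result: its proof is a one-line application of Danchin's Proposition 2.1 with the parameters $p = p_2 = 2$, $N = 2$, $\sigma = s$, together with the embedding $H^{3/2}(\Sigma) \hookrightarrow B^1_{2,\infty}(\Sigma) \cap L^\infty(\Sigma)$, which converts Danchin's Besov-norm hypothesis on $\nabla w$ into the $H^{3/2}$ norm appearing in the lemma. Your route has the merit of not treating the cited proposition as a black box, and the skeleton you lay out --- frequency-localize, $L^2$ energy estimate on each block, commutator bound for $[w\cdot\nabla, \Delta_q]$ via Bony decomposition, $\ell^2$ summation, Gronwall --- is the correct one. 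Two caveats: first, the commutator estimate you assert is essentially the entire content of the proposition and you only gesture at its proof, so a full write-up would still need to track the high-low, low-high, and remainder pieces; second, by collapsing the hypothesis on $w$ immediately to $\|Dw\|_{L^\infty} \ls \|Dw\|_{3/2}$ you obscure where the constraint $s < 2$ actually comes from. The sharp commutator estimate needs $Dw$ in $B^1_{2,\infty} \cap L^\infty$, and it is the interaction between the index $1$ in $B^1_{2,\infty}$ and the target index $s$ in the paraproduct terms that forces $s < 1 + N/p_1 = 2$; the embedding $H^{3/2} \hookrightarrow B^1_{2,\infty}$ is precisely what makes the $H^{3/2}$ norm a usable hypothesis, and this is what the paper's one-line proof is pointing at.
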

\begin{proof}
Use $p=p_2 = 2$, $N=2$, and $\sigma=s$ in Proposition 2.1 of \cite{danchin} along with the embedding    $H^{3/2}  \hookrightarrow B^1_{2,\infty}\cap L^\infty.$  
\end{proof}

We now parlay Lemma \ref{sobolev_transport} into the desired estimate for the highest spatial derivatives of $\eta$.

\begin{thm}\label{specialized_transport_estimate}
Assume that $\se_{n}^\sigma \le \delta$ for the universal $\delta \in (0,1)$ given by Proposition \ref{infty_ests}.  Then 
\begin{multline}\label{ste_0}
\sup_{0\le r \le t}  \f_{2N}(r) \ls \exp\left(C \int_0^t \left(1 + \gamma \sqrt{\k(r)}\right) \sqrt{\k(r)} dr \right) \\
\times \left[ \f_{2N}(0)   +   t  \int_0^t (1+ \gamma \se_{2N}^0(r)) \sd_{2N}^0(r)dr  +  \left(\int_0^t (1+ \gamma \se_{2N}^0(r)) \sqrt{\k(r) \f_{2N}(r)}   dr\right)^2 \right].
\end{multline}
\end{thm}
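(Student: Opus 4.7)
The starting point is to recognize that the kinematic boundary condition in \eqref{ns_geometric} can be rearranged into a genuine scalar transport equation on $\Sigma$: expanding $u\cdot \n = u_3 - u_1 \p_1 \eta - u_2 \p_2 \eta$ gives
\begin{equation*}
\dt \eta + w \cdot D\eta = u_3|_\Sigma, \qquad w := ((u_1 + s(\eta))|_\Sigma, \, u_2|_\Sigma).
\end{equation*}
The plan is to apply a spatial multi-index $\p^\beta$ with $|\beta| = 4N$, yielding a transport equation for $\zeta_\beta := \p^\beta \eta$ with source $g_\beta := \p^\beta u_3 - [\p^\beta, w\cdot D]\eta$, and then invoke Lemma \ref{sobolev_transport} at the fractional regularity $s = 1/2$. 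Squaring the resulting inequality, summing over all $\beta$ with $|\beta| = 4N$, and using $(a+b)^2 \le 2(a^2+b^2)$ together with Cauchy--Schwarz in time (to convert $(\int \sqrt{\sd_{2N}^0})^2$ into the $t\int \sd_{2N}^0$ term appearing on the right) will produce the stated estimate.

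Two distinct ingredients must be computed. First, the exponential prefactor in Danchin's estimate depends on $\int_0^t \norm{Dw}_{3/2}$. Since $s'(\eta) = -\gamma \eta$, we have $Dw = Du|_\Sigma + (-\gamma\eta D\eta, 0)|_\Sigma$, and using that $H^{3/2}$ is an algebra on the $2$-dimensional torus $\Sigma$, together with the trace/Sobolev embeddings, yields
\begin{equation*}
\norm{Dw}_{3/2} \ls \norm{u}_{H^3(\Sigma)} + \gamma \norm{\eta}_{5/2}^2 \ls (1 + \gamma\sqrt{\k})\sqrt{\k},
\end{equation*}
which is precisely the exponent appearing in \eqref{ste_0}. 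Second, the source term requires handling the forcing $\p^\beta u_3$ and the commutator. The forcing is straightforward: $\norm{\p^\beta u_3}_{1/2} \le \norm{u}_{H^{4N+1/2}(\Sigma)} \ls \norm{u}_{4N+1} \le \sqrt{\sd_{2N}^0}$.

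The commutator is the real obstacle. Expanding by Leibniz,
\begin{equation*}
[\p^\beta, w\cdot D]\eta = \sum_{0 < \mu \le \beta} \binom{\beta}{\mu} \p^\mu w \cdot D\p^{\beta-\mu}\eta,
\end{equation*}
we separate the unique borderline case $|\mu| = 1$, in which $D\p^{\beta-\mu}\eta$ carries all $4N$ available derivatives on $\eta$ and therefore must be bounded using $\f_{2N}$, from the remaining terms $|\mu|\ge 2$, in which the number of derivatives on $\eta$ drops to at most $4N - 1$ and everything can be absorbed into $\se_{2N}^0$ or $\sd_{2N}^0$. For the borderline case, a Kato--Ponce type product estimate in $H^{1/2}(\Sigma)$ together with the $L^\infty$ embedding $H^{5/2}(\Sigma) \hookrightarrow C^1(\Sigma)$ gives $\norm{\p^\mu w}_{L^\infty} \ls (1+\gamma\sqrt{\k})\sqrt{\k}$ and $\norm{D\p^{\beta-\mu}\eta}_{1/2} \le \sqrt{\f_{2N}}$, yielding the factor $(1+\gamma\se_{2N}^0)\sqrt{\k\f_{2N}}$ claimed in the theorem. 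The lower-order terms ($|\mu|\ge 2$) are distributed via $\norm{fg}_{1/2} \ls \norm{f}_{L^\infty}\norm{g}_{1/2} + \norm{f}_{3/2}\norm{g}_{L^\infty}$ (and its interpolated variants) so that the $\eta$ factor lives in a norm controlled by $\se_{2N}^0$, while the $w$ factor contributes $\sqrt{\sd_{2N}^0}$ once the $u$-derivatives are counted; the additional $\gamma \eta D\eta$ part of $w$ generates only the advertised polynomial-in-$\gamma$ prefactor.

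The main difficulty is precisely in Step 4 above: the commutator must be split so that the one term forced to use the \emph{high-tier} norm $\f_{2N}$ pairs cleanly against the \emph{low-tier} norm $\k$, since this is the only structure consistent with the two-tier scheme described in the introduction (where decay of the low-tier energy will later absorb the linear-in-$t$ growth of $\f_{2N}$). Bookkeeping the $\gamma$ dependence coming from $s(\eta)$ in $w$, and from the curvature-like terms it introduces, requires care so that every contribution collapses into the prefactor $(1+\gamma\se_{2N}^0)$ and no additional $\sigma$ dependence is generated, which is essential for the eventual vanishing surface tension analysis.
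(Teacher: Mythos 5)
Your overall strategy matches the paper's exactly: rewrite the kinematic condition as a scalar transport equation $\dt\eta + w\cdot D\eta = u_3$ with $w = (\,(u_1 + s(\eta))|_\Sigma, u_2|_\Sigma\,)$, differentiate by $\p^\beta$ with $|\beta| = 4N$, apply Lemma \ref{sobolev_transport} at regularity $s = 1/2$, square and Cauchy--Schwarz in time, and estimate the exponent via $\norm{Dw}_{3/2} \ls (1+\gamma\sqrt{\k})\sqrt{\k}$. The pure source $\p^\beta u_3$ is treated the same way. The gap is in your analysis of the commutator $\sum_{0 < \mu \le \beta}\binom{\beta}{\mu}\p^\mu w \cdot D\p^{\beta-\mu}\eta$.

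You claim that the only case that forces $\f_{2N}$ into the estimate is $|\mu| = 1$, and that for $|\mu| \ge 2$ ``the number of derivatives on $\eta$ drops to at most $4N-1$ and everything can be absorbed into $\se_{2N}^0$ or $\sd_{2N}^0$,'' with the $s(\eta)$ part of $w$ only contributing a polynomial-in-$\gamma$ prefactor. This is wrong: you are only counting the $\eta$-derivatives in the factor $D\p^{\beta-\mu}\eta$ and forgetting that $\p^\mu s(\eta) = -\tfrac{\gamma}{2}\p^\mu(\eta^2)$ itself carries up to $|\mu|$ derivatives of $\eta$. When $|\mu|$ is near $4N$ (in particular $|\mu| = 4N$, where the second factor is just $D\eta$), the Leibniz expansion of $\p^\mu(\eta^2)$ produces a term $\eta\,\p^\mu\eta$, and any Sobolev product estimate for $\norm{\eta\,\p^\mu\eta \cdot D\eta}_{H^{1/2}(\Sigma)}$ necessarily places $\p^\mu\eta$ in $H^{1/2}(\Sigma)$, i.e.\ requires $\norm{\eta}_{4N+1/2} = \sqrt{\f_{2N}}$. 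You cannot downgrade $\p^\mu\eta$ to $L^2$ because $L^2 \cdot H^s \not\hookrightarrow H^{1/2}$. This is why the paper splits the commutator sum at $|\beta| = 2N$ rather than at $|\beta| = 1$: its high-range case $2N+1 \le |\beta| \le 4N$ explicitly isolates (see \eqref{ste_3}) the term $\gamma\norm{\eta}_2\norm{\eta}_{4N+1/2}\norm{\eta}_{2N+3}$, precisely the $\f_{2N}$ contribution from the $s(\eta)$ factor that your split misses. The estimate still closes in the same form $(1+\gamma\,\text{poly})\sqrt{\k\f_{2N}}$ because both borderline cases ($|\mu| = 1$ and $|\mu| = 4N$) land on the same quantity, but your asserted dichotomy between the two cases is false and the argument as written is incomplete. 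A secondary point: the product bound $\norm{fg}_{1/2} \ls \norm{f}_{L^\infty}\norm{g}_{1/2}$ that you invoke for the $|\mu| = 1$ case is not a valid one-sided estimate; the paper instead uses Lemma \ref{sobolev_product_1} with the algebra $H^2(\Sigma)$ playing the role of $L^\infty$, which is the clean way to avoid the symmetric Kato--Ponce term that would require $\norm{D\p^{\beta-\mu}\eta}_{L^\infty}$ (and hence more than $4N+1/2$ derivatives of $\eta$, which is not controlled).
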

\begin{proof}
We begin by introducing some notation.  Throughout the proof we write $u = \tilde{u} + u_3 e_3$ for $\tilde{u} = u_1 e_1 + u_2 e_3$, and we write $w = \tilde{u} + s(\eta) e_1.$ We write $D$ for the $2D$ gradient operator on $\Sigma$   Then $\eta$ solves the transport equation 
\begin{equation}\label{ste_99}
\dt \eta + w \cdot D \eta = u_3 \text{ on }\Sigma. 
\end{equation}
We may then use Lemma \ref{sobolev_transport} with $s = 1/2$ to estimate
\begin{equation}\label{ste_9}
 \sup_{0 \le r \le t} \norm{\eta(r)}_{1/2} \le \exp \left(C \int_0^t \norm{D w(r)}_{H^{3/2}(\Sigma)} dr \right) \left[ \norm{\eta_0}_{1/2} + \int_0^t \norm{u_3(r)}_{H^{1/2}(\Sigma)}dr  \right].
\end{equation}
We estimate the term in the exponential by using the fact that $H^{3/2}(\Sigma)$ is an algebra:
\begin{equation}
 \norm{Dw}_{3/2} \ls \norm{D u}_{H^{3/2}(\Sigma)} + \gamma \norm{\eta D \eta}_{3/2} \ls \norm{D u}_{H^{3/2}(\Sigma)} + \gamma \norm{\eta}_{3/2} \norm{\eta}_{5/2} \ls (1 + \gamma \sqrt{\k }) \sqrt{\k},
\end{equation}
where $\k$ is as defined in \eqref{K_def}.  We may also use trace theory to bound  $\norm{u_3(r)}_{H^{1/2}(\Sigma)} \ls \sd_{2N}(r).$  This allows us to square both sides of \eqref{ste_9} and utilize Cauchy-Schwarz to deduce that
\begin{equation}\label{ste_1}
 \sup_{0 \le r \le t} \ns{\eta(r)}_{1/2} \ls \exp \left(2C \int_0^t (1 + \gamma \sqrt{\k(r)}) \sqrt{K(r)} dr \right) \left[ \ns{\eta_0}_{1/2} + t \int_0^t \sd_{2N}^0(r) dr  \right]. 
\end{equation}

Next we derive a higher regularity version of the estimate \eqref{ste_1}.  To this end we choose any multi-index  $\alpha \in \mathbb{N}^{2}$ with $\abs{\alpha} = 4N$, and we apply the operator $\pa$ to \eqref{ste_99} to see that $\pa \eta$ solves the transport equation
\begin{equation} 
 \dt (\partial^\alpha  \eta) + w \cdot D (\partial^\alpha  \eta) = \partial^\alpha u_3  -
\sum_{0 < \beta \le \alpha } C_{\alpha,\beta} \partial^\beta  w \cdot D \partial^{\alpha-\beta}  \eta
:=G^\alpha
\end{equation}
with the initial condition $\pa \eta_0$.  We then again apply Lemma \ref{sobolev_transport} with $s=1/2$ to find that
\begin{equation}\label{ste_2}
 \sup_{0\le r \le t} \norm{\partial^\alpha \eta(r)}_{1/2} \le \exp\left(C \int_0^t \norm{D w(r)}_{H^{3/2}(\Sigma)} dr \right) \left[ \norm{\pa \eta_0}_{1/2} + \int_0^t \norm{G^\alpha (r)}_{1/2}dr  \right].
\end{equation}
We will now estimate $\norm{G^\alpha }_{1/2}$.  In doing so we will need to write
\begin{equation}\label{ste_10}
 \p^\beta s(\eta) = -\frac{\gamma}{2} \sum_{0 \le \delta \le \beta} C_{\delta,\beta} \p^\delta \eta \p^{\beta - \delta} \eta.
\end{equation}

For $\beta \in \mathbb{N}^2$ satisfying $2N+1 \le \abs{\beta} \le 4N$ we may apply Lemma \ref{sobolev_product_1} with $s_1 = r = 1/2$ and $s_2=2$ to bound
\begin{equation}
\norm{\p^\beta w D \p^{\alpha-\beta} \eta}_{1/2} \ls   \norm{\p^\beta w}_{1/2} \norm{D \p^{\alpha-\beta} \eta}_{2} \ls  \left(\norm{\p^\beta u}_{H^{1/2}(\Sigma)} + \norm{\p^\beta s(\eta)}_{1/2}  \right) \norm{D \p^{\alpha-\beta} \eta}_{2}.
\end{equation}
To handle the $s(\eta)$ term we again employ Lemma \ref{sobolev_product_1} with $s_1 = r=1/2$ and $s_2 =2$ along with the expansion \eqref{ste_10} to estimate 
\begin{equation}
  \norm{\p^\beta s(\eta)}_{1/2} \ls \gamma \sum_{\ell =0 }^{\lfloor \abs{\beta}/2 \rfloor} \norm{\eta}_{2+\ell}  \norm{\eta}_{\abs{\beta}-\ell +1/2} \ls \gamma \norm{\eta}_{\lfloor \abs{\beta}/2 \rfloor +2} \norm{\eta}_{\abs{\beta}-1/2} + \gamma \norm{\eta}_2  \norm{\eta}_{\abs{\beta}+1/2}
\end{equation}
This and trace theory then imply that
\begin{multline}\label{ste_3}
\sum_{\substack{0 < \beta \le \alpha \\ 2N+1 \le \abs{\beta} \le 4N }}  \norm{ C_{\alpha,\beta} \partial^\beta  w \cdot D \partial^{\alpha-\beta}  \eta}_{1/2} \ls \left( \norm{u}_{4N+1} + \gamma \norm{\eta}_{2N+2} \norm{\eta}_{4N-1/2} + \gamma \norm{\eta}_2 \norm{\eta}_{4N+1/2}  \right)  \norm{\eta}_{2N+3} \\
\ls \left[ \left(1 + \gamma \sqrt{\se_{2N}^0}  \right) \sqrt{\sd_{2N}^0}  + \gamma \sqrt{\k \f_{2N}} \right]    \sqrt{\se_{2N}^0}.
\end{multline}
On the other hand, if $\beta$ satisfies $1 \le \abs{\beta} \le 2N$ then we use Lemma \ref{sobolev_product_1} to bound
\begin{equation}
\norm{\p^\beta w D \p^{\alpha-\beta} \eta}_{1/2} \ls \left(  \norm{\p^\beta \tilde{u}}_{H^{2}(\Sigma)} + \gamma \norm{\p^\beta s(\eta)}_{2} \right)\norm{D \p^{\alpha-\beta} \eta}_{1/2}.
\end{equation}
Since $H^2(\Sigma)$ is an algebra we can then use \eqref{ste_10} bound
\begin{equation}
 \norm{\p^\beta s(\eta)}_2 \ls \gamma \ns{\eta}_{\abs{\beta}+2}.
\end{equation}
From these we deduce that
\begin{multline}\label{ste_4}
\sum_{\substack{0 < \beta \le \alpha \\ 1 \le \abs{\beta} \le 2N }}  \norm{ C_{\alpha,\beta} \partial^\beta  w \cdot D \partial^{\alpha-\beta}  \eta}_{1/2} \ls \left( \norm{u}_{2N+3} + \gamma \ns{\eta}_{2N+2} \right) \norm{\eta}_{4N-1/2} \\
+ \left(\norm{D \tilde{u}}_{H^{2}(\Sigma)}  + \gamma \ns{\eta }_3 \right) \norm{\eta}_{4N+1/2} 
\ls \sqrt{\se_{2N}^0 \sd_{2N}^0} + (1+ \gamma \sqrt{\se_{2N}^0}) \sqrt{\k \f_{2N}}.
\end{multline}
The only remaining term in $G^\alpha$ is $\pa u_3$, which we estimate with trace theory:
\begin{equation}\label{ste_5}
 \norm{\pa u_3}_{H^{1/2}(\Sigma)} \ls \norm{ u_3}_{4N+1} \ls \sqrt{\sd_{2N}^0}.
\end{equation}
We may then combine \eqref{ste_3}, \eqref{ste_4}, and \eqref{ste_5} for
\begin{equation}\label{ste_6}
 \norm{G^\alpha}_{1/2} \ls \sqrt{\sd_{2N}} +  (1+ \gamma\sqrt{\se_{2N}}) \left( \sqrt{ \se_{2N}^0 \sd_{2N}^0} + \sqrt{\k \f_{2N}}\right).
\end{equation}

Returning now to \eqref{ste_2}, we square both sides and employ \eqref{ste_6} and our previous estimate of the term in the exponential to find that
\begin{multline}\label{ste_7}
 \sup_{0\le r \le t} \ns{\partial^\alpha \eta(r)}_{1/2} \le \exp\left(2 C \int_0^t (1 + \gamma \sqrt{\k(r)}) dr \right) \\
\times \left[ \ns{\pa \eta_0}_{1/2} + t \int_0^t (1+ \gamma \se_{2N}^0(r)) \sd_{2N}^0(r)dr  +  \left(\int_0^t (1+ \gamma \se_{2N}^0(r)) \sqrt{\k(r) \f_{2N}(r)}   dr\right)^2 \right]. 
\end{multline}
Then the estimate \eqref{ste_0} follows by summing \eqref{ste_7} over all $\abs{\alpha}=4N$, adding the resulting inequality to \eqref{ste_1}, and using the fact that $\ns{\eta}_{4N+1/2} \ls \ns{\eta}_{1/2} + \sum_{\abs{\alpha}=4N} \ns{\pa \eta}_{1/2}$. 
\end{proof}

Next we show that if we know a priori that $\g_{2N}$ is small, then in fact we can estimate $\f_{2N}$ in a stronger form than in Theorem \ref{sobolev_transport}.

\begin{thm}\label{f_bound}
Let $\g_{2N}$ be defined by \eqref{G_def} for $N \ge 3$.  There exists a universal $\delta \in (0,1)$ such that if $\g_{2N}(T) \le \delta$ and $\gamma \le 1$, then  
\begin{equation}\label{f_b_0}
\sup_{0 \le r \le t} \f_{2N}(r) \ls \f_{2N}(0) + t \int_0^t \sd_{2N}^0(r) dr 
\end{equation}
for all $0 \le t \le T$.
\end{thm}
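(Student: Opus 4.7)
The goal is to strip the right side of the transport estimate from Theorem \ref{specialized_transport_estimate} down to only the two allowed terms $\f_{2N}(0)$ and $t\int_0^t\sd_{2N}^0$. The three obstructions are (i) the exponential prefactor, (ii) the extra factors $(1+\gamma\se_{2N}^0)$ multiplying $\sd_{2N}^0$, and (iii) the quadratic integral of $\sqrt{\k\,\f_{2N}}$. All three will be handled by exploiting the assumed smallness of $\g_{2N}(T)$ together with the decay of the low-tier energy that is built into the definition \eqref{G_def}.

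First, I will note that $\g_{2N}(T)\le\delta$ immediately gives $\se_{2N}^0(r)\le\delta$ and $\f_{2N}(r)\le(1+r)\delta$ for $0\le r\le T$, and by Proposition \ref{kf_ests} combined with the decay term in \eqref{G_def},
\[
\k(r)\ls \se_{N+2}^0(r)\le \frac{\g_{2N}(T)}{(1+r)^{4N-8}}\le\frac{\delta}{(1+r)^{4N-8}}.
\]
Since $N\ge 3$ we have $4N-8\ge 4$, so $(1+r)^{-(2N-4)}$ and $(1+r)^{-(4N-9)/2}$ are integrable on $[0,\infty)$. For the exponential term in \eqref{ste_0}, using $\gamma\le 1$ and $\k\le\delta$,
\[
\int_0^t\bigl(1+\gamma\sqrt{\k(r)}\bigr)\sqrt{\k(r)}\,dr\ls\sqrt{\delta}\int_0^\infty(1+r)^{-(2N-4)}\,dr\ls\sqrt{\delta},
\]
so after choosing $\delta$ small the exponential is bounded by a universal constant. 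For the middle term on the right of \eqref{ste_0}, the bound $(1+\gamma\se_{2N}^0(r))\le 2$ reduces it directly to $2t\int_0^t\sd_{2N}^0(r)\,dr$.

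The main obstacle is the third term $\bigl(\int_0^t(1+\gamma\se_{2N}^0)\sqrt{\k\,\f_{2N}}\,dr\bigr)^{2}$, since $\f_{2N}$ is the very quantity we are trying to control. The key observation is that we can pull $\sqrt{\f_{2N}}$ out as a supremum at the price of a factor of $\sqrt{\delta}$ from integrating $\sqrt{\k}$:
\[
\int_0^t\sqrt{\k(r)\,\f_{2N}(r)}\,dr\le\Bigl(\sup_{0\le s\le t}\f_{2N}(s)\Bigr)^{1/2}\int_0^t\sqrt{\k(r)}\,dr\ls\sqrt{\delta}\,\Bigl(\sup_{0\le s\le t}\f_{2N}(s)\Bigr)^{1/2},
\]
using the pointwise bound on $\k$ and integrability of $(1+r)^{-(2N-4)}$. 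Squaring gives a bound of the form $C\delta\sup_{0\le s\le t}\f_{2N}(s)$.

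Combining these three reductions with \eqref{ste_0} yields
\[
\sup_{0\le r\le t}\f_{2N}(r)\le C\Bigl[\f_{2N}(0)+t\int_0^t\sd_{2N}^0(r)\,dr\Bigr]+C'\delta\sup_{0\le r\le t}\f_{2N}(r).
\]
Since the finiteness of the left side is guaranteed by the local theory (Theorem \ref{lwp}) and by the a priori bound $\f_{2N}(r)\le(1+r)\g_{2N}(T)<\infty$, I may shrink $\delta$ so that $C'\delta\le 1/2$ and absorb the last term on the left. This yields \eqref{f_b_0} and completes the proof. The single delicate point is the absorption step, which is why the statement requires the a priori smallness of the full functional $\g_{2N}$ rather than just of $\se_{2N}^0$: it is the decay factor $(1+r)^{-(4N-8)}$ encoded in $\g_{2N}$ that makes $\int_0^t\sqrt{\k}$ uniformly small.
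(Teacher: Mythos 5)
Your proposal is correct and follows essentially the same approach as the paper: control the exponential by integrating $\sqrt{\k}\ls\sqrt{\se_{N+2}^0}$ using the algebraic decay built into $\g_{2N}$, pull $\sup\f_{2N}$ out of the quadratic integral term at the price of a factor $\delta$, and then absorb. The only cosmetic difference is that you make explicit the trivial bound $(1+\gamma\se_{2N}^0)\le 2$ on the middle term, which the paper leaves implicit.
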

\begin{proof}
According to Proposition \ref{kf_ests} and the assumed bounds, we may estimate 
\begin{equation}\label{f_b_1}
 \int_0^t  \left(1 + \gamma \sqrt{\k(r)}\right) \sqrt{\k(r)}  dr \ls \int_0^t \sqrt{\se_{N+2}^0(r)} dr \le  \sqrt{\delta} \int_0^\infty  \frac{1}{(1+r)^{2N-4}}dr  \ls \sqrt{\delta}.
\end{equation}
Since $\delta \in (0,1)$ this implies that for any universal $C>0$, 
\begin{equation}\label{f_b_2}
 \exp\left(C  \int_0^t  \left(1 + \gamma \sqrt{\k(r)}\right) \sqrt{\k(r)}  dr  \right) \ls 1.
\end{equation}
Similarly, 
\begin{equation}\label{f_b_3}
 \left(\int_0^t (1+ \gamma \se_{2N}(r)) \sqrt{\k(r) \f_{2N}(r)} dr \right)^2 \ls \left( \sup_{0\le r \le t}  \f(r) \right) \left(\int_0^t \sqrt{\se_{N+2}^0(r)}dr  \right)^2 
\ls \left( \sup_{0\le r \le t}  \f(r) \right) \delta.
\end{equation}
Then \eqref{f_b_1}--\eqref{f_b_3} and  Theorem \ref{specialized_transport_estimate} imply that
\begin{equation}
\sup_{0\le r \le t}  \f_{2N}(r) \le  
C \left( \f_{2N}(0) +   t \int_0^t \sd_{2N}^0(r) dr\right)   + C \delta \left( \sup_{0\le r \le t}  \f_{2N}(r) \right),
\end{equation}
for some $C>0$.  Then if $\delta$ is small enough so that $ C \delta \le 1/2$, we may absorb the right-hand $\f_{2N}$ term onto the left and deduce \eqref{f_b_0}.

\end{proof}

\subsection{A priori estimates for $\g_{2N}$ }

Our goal now is to complete our a priori estimates for $\g_{2N}$.  We start with the bounds of the high-tier terms and $\f_{2N}$.

\begin{thm}\label{g_est_top}
There exist $\delta_0,\gamma_0 \in (0,1)$ such that if $0 \le \gamma < \gamma_0$, $0\le \sigma \le 1$, and $\g_{2N}(T) \le \delta \le \delta_0$, then 
\begin{equation}\label{get_0}
\sup_{0 \le r \le t} \se_{2N}^\sigma(r) + \int_0^t \sd_{2N}^\sigma(r)dr + \sup_{0 \le r \le t} \frac{\f_{2N}(r)}{1+r} \ls \se_{2N}^\sigma(0) + \f_{2N}(0)
\end{equation}
for all $0 \le t \le T$.
\end{thm}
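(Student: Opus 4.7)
The plan is to combine Theorem \ref{energy_dissipation_est} (the horizontal energy-dissipation inequality), Theorems \ref{dissipation_comparision} and \ref{energy_comparision} (the comparison estimates), Theorem \ref{nlin_G} (control of the $G^i$ nonlinearities), Proposition \ref{Hn_estimate} (the bound on $\mathcal{H}_n$), and Theorem \ref{f_bound} (the transport estimate for $\f_{2N}$), and then to close the resulting coupled inequalities by an absorption argument that exploits the algebraic decay built into the very definition of $\g_{2N}$. Throughout, because $0 \le \sigma \le 1$, the constants $\ks$ appearing in the cited theorems are bounded by universal constants, which is crucial for the estimates to be uniform in $\sigma$.

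The first step is to apply Theorem \ref{energy_dissipation_est} with $n=2N$, absorb the term $\gamma \sd_{2N}^0$ using $\gamma \le \gamma_0$, absorb $\sqrt{\se_{2N}^0}\sd_{2N}^\sigma$ using $\se_{2N}^0 \le \se_{2N}^\sigma \le \delta$, and apply Young's inequality to $\sqrt{\sd_{2N}^\sigma \k\f_{2N}}$. Then I would use Theorem \ref{dissipation_comparision} together with the $\y_n$ estimate from Theorem \ref{nlin_G} to upgrade $\sdb_{2N}$ to $\sd_{2N}^\sigma$ modulo an error $\k \f_{2N}$. Similarly, Theorem \ref{energy_comparision} and the $\w_n$ estimate from Theorem \ref{nlin_G}, combined with Proposition \ref{Hn_estimate} (which yields $|\mathcal{H}_{2N}| \ls \delta^{1/2} \se_{2N}^0$), identify $\seb_{2N}^\sigma + \mathcal{H}_{2N}$ with $\se_{2N}^\sigma$ up to an additive error of size $\k \f_{2N}$. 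After these absorption and comparison steps, the energy inequality becomes
\begin{equation}\label{plan_eq_1}
\frac{d}{dt}\bigl(\seb_{2N}^\sigma + \mathcal{H}_{2N}\bigr) + c\, \sd_{2N}^\sigma \ls \k\, \f_{2N}
\end{equation}
for a universal constant $c>0$, provided $\delta_0,\gamma_0$ are chosen small enough. Integrating \eqref{plan_eq_1} in time and using the energy comparison yields
\begin{equation}\label{plan_eq_2}
\sup_{0 \le r \le t} \se_{2N}^\sigma(r) + \int_0^t \sd_{2N}^\sigma(r)\,dr \ls \se_{2N}^\sigma(0) + \int_0^t \k(r) \f_{2N}(r)\,dr.
\end{equation}

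In parallel, I would apply Theorem \ref{f_bound} to the transport quantity. This gives $\sup_{0 \le r \le t} \f_{2N}(r) \ls \f_{2N}(0) + t \int_0^t \sd_{2N}^0(r)\,dr$. Dividing by $1+r$ on the left and using that $r/(1+r) \le 1$ on the right produces
\begin{equation}\label{plan_eq_3}
\sup_{0\le r\le t} \frac{\f_{2N}(r)}{1+r} \ls \f_{2N}(0) + \int_0^t \sd_{2N}^\sigma(r)\,dr.
\end{equation}
Denoting by $M(t)$ the left-hand side of \eqref{get_0}, the bounds \eqref{plan_eq_2} and \eqref{plan_eq_3} together give $M(t) \ls \se_{2N}^\sigma(0) + \f_{2N}(0) + \int_0^t \k(r)\f_{2N}(r)\,dr$.

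To absorb the remaining integral, I would factor the integrand as $\k(r)\f_{2N}(r) = [\k(r)(1+r)] \cdot [\f_{2N}(r)/(1+r)]$ and use Proposition \ref{kf_ests} together with the a priori assumption $\g_{2N}(T) \le \delta$ to bound $\k(r)(1+r) \ls \se_{N+2}^\sigma(r)(1+r) \ls \delta/(1+r)^{4N-9}$. Since $N \ge 3$ forces $4N-9 \ge 3 > 1$, integration over $[0,\infty)$ gives $\int_0^\infty \k(r)(1+r)\,dr \ls \delta$, hence $\int_0^t \k(r)\f_{2N}(r)\,dr \ls \delta\, M(t)$. Choosing $\delta_0$ small enough so that this factor is at most $1/2$ of the universal constant in front of $M(t)$, the last term is absorbed into $M(t)$, proving \eqref{get_0}. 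The principal obstacle is the dimensional compatibility between the linear-in-time growth of $\f_{2N}$ supplied by the transport estimate and the integrability of $\k(r)(1+r)$ coming from the $(1+r)^{4N-8}$ decay weight in the definition of $\g_{2N}$; this compatibility is exactly what forces $N \ge 3$ and drives the choice of the weight $4N-8$ in \eqref{G_def}. The coordination of $\gamma_0$ and $\delta_0$ must simultaneously secure the absorption of $\gamma \sd_{2N}^0$ and $\sqrt{\se_{2N}^0}\sd_{2N}^\sigma$ in \eqref{plan_eq_1}, the equivalence of $\seb_{2N}^\sigma + \mathcal{H}_{2N}$ with $\se_{2N}^\sigma$, and the final absorption of $\delta M(t)$.
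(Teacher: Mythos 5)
Your proposal follows essentially the same approach as the paper's proof: both invoke Theorem \ref{energy_dissipation_est} with $n=2N$, upgrade via Theorems \ref{energy_comparision} and \ref{dissipation_comparision} together with Theorem \ref{nlin_G} and Proposition \ref{Hn_estimate}, then use the transport bound of Theorem \ref{f_bound} and the $(1+r)^{4N-8}$ decay encoded in $\g_{2N}$ to control $\int_0^t \k(r)\f_{2N}(r)\,dr$ and close by absorption. The only genuine difference is bookkeeping in that last absorption: you introduce the single quantity $M(t)$ and factor $\k\f_{2N} = [\k(1+r)]\cdot[\f_{2N}/(1+r)]$, bounding $\int_0^t \k(1+r)\,dr \ls \delta$ and pulling out $\sup_{r\le t}\f_{2N}(r)/(1+r)$ as a piece of $M(t)$, whereas the paper substitutes the explicit bound $\f_{2N}(r) \ls \f_{2N}(0) + (1+r)\int_0^r\sd_{2N}^0$ into the integral and evaluates directly. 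Your version is a touch cleaner notationally, but the two arguments exploit precisely the same integrability (needing $4N-9>1$, hence $N\ge 3$) and are mathematically equivalent. One small point you should make explicit: when passing from the comparison estimate $\se_{2N}^\sigma \ls (\seb_{2N}^\sigma + \mathcal{H}_{2N}) + \k\f_{2N}$ to the integrated inequality \eqref{plan_eq_2}, a pointwise error $\k(t)\f_{2N}(t)$ appears on the right; it is harmless because $\k(t)\f_{2N}(t) \ls \delta\,M(t)$ by the same factoring you use for the integral, but it should be noted (the paper is similarly terse on this point).
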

\begin{proof}

We first assume that $\delta_0$ is as small as in Proposition \ref{infty_ests}.  We also assume that $\delta_0$ is as small as in Proposition \ref{Hn_estimate} so that $\abs{\H_{2N}} \ls (\se_{2N}^0)^{3/2}$.   

Note that the assumed bounds on $\sigma$ and $\gamma$ allow us to estimate $\ks \ls 1$ in any inequality in which a term of the form $\ks$ appears.  We will employ this estimate throughout the proof in order to remove the appearances of constants $\ks$.

We invoke Theorems \ref{energy_comparision} and \ref{dissipation_comparision} with $n = 2N$ in order to bound 
\begin{equation}\label{get_1}
 \se_{2N}^\sigma \ls \w_{2N} + \seb_{2N}^\sigma \text{ and } 
 \sd_{2N}^\sigma \ls \y_{2N} + \sdb_{2N}.
\end{equation}
According to Theorem \ref{nlin_G} we may then bound
\begin{equation}\label{get_2}
 \w_{2N} \ls \se_{2N}^0 \se_{2N}^\sigma + \k \f_{2N} \text{ and } \y_{2N} \ls \se_{2N}^0 \sd_{2N}^\sigma + \k \f_{2N}.
\end{equation}
Upon combining \eqref{get_1} and \eqref{get_2} with the above bound for $\mathcal{H}_{2N}$, we find that
\begin{equation}
 \se_{2N}^\sigma \ls (\seb_{2N}^\sigma + \mathcal{H}_{2N}) + \se_{2N}^0 \se_{2N}^\sigma + (\se_{2N}^0)^{3/2} + \k \f_{2N} \text{ and } \sd_{2N}^\sigma \ls \sdb_{2N} + \se_{2N}^0 \sd_{2N}^\sigma + \k \f_{2N},
\end{equation}
and consequently, if $\delta$ is assumed to be small enough we may absorb the $\se_{2N}^0 \se_{2N}^\sigma + (\se_{2N}^0)^{3/2}$ and $\se_{2N}^0 \sd_{2N}^\sigma$ terms onto the left to arrive at the bounds
\begin{equation}\label{get_3}
 \se_{2N}^\sigma \ls \left( \seb_{2N}^\sigma + \mathcal{H}_{2N} \right) + \k \f_{2N} \text{ and } \sd_{2N}^\sigma \ls \sdb_{2N} + \k \f_{2N}.
\end{equation}

We apply Theorem \ref{energy_dissipation_est} with $n = 2N$ and integrate in time from $0$ to $t$ to see that
\begin{multline}
 \left(\seb_{2N}^\sigma(t)  + \mathcal{H}_{2N}(t) \right) + \int_0^t \sdb_{2N}(r) dr  \ls \left(\seb_{2N}^\sigma(0)  + \mathcal{H}_{2N}(0) \right)  + \int_0^t \gamma \sd_{2N}^0(r)dr \\
 +
\int_0^t \sqrt{\se_{2N}^0(r) } \sd_{2N}^\sigma(r) dr  +  \int_0^t \sqrt{  \sd_{2N}^\sigma(r) \k(r) \f_{2N}(r)} dr.
\end{multline}
We then combine this with the estimates in \eqref{get_3} to arrive at the refined bound
\begin{multline}\label{get_4}
 \se_{2N}^\sigma(t) + \int_0^t \sd_{2N}^\sigma(r) dr \ls \se_{2N}^\sigma(0) + \int_0^t \gamma \sd_{2N}^0(r) dr + \int_0^t \sqrt{\se_{2N}^0(r) } \sd_{2N}^\sigma(r) dr  \\
   +  \int_0^t \left(  \k(r) \f_{2N}(r)  +  \sqrt{  \sd_{2N}^\sigma(r) \k(r) \f_{2N}(r)} \right)dr.
\end{multline}

We now turn our attention to the $\k \f_{2N}$ terms appearing on the right side of \eqref{get_4}.  To handle these we first note that $\k \ls \se_{N+2}^0$, as is shown in Proposition \ref{kf_ests}.  Thus 
\begin{equation}\label{get_5}
 \k(r) \ls \se_{N+2}^0(r) =   \frac{1}{(1+r)^{4N-8}} (1+r)^{4N-8} \se_{N+2}^0 \le \frac{1}{(1+r)^{4N-8}} \g_{2N}(T) \le \frac{\delta}{(1+r)^{4N-8}}.
\end{equation}
Next we use Theorem \ref{f_bound} to see that for $0 \le r \le t$ we can estimate 
\begin{equation}\label{get_6}
 \f_{2N}(r) \ls \f_{2N}(0)  + (1+r)\int_0^r \sd_{2N}^0(s) ds.
\end{equation}
We may then combine \eqref{get_5} and \eqref{get_6} to estimate 
\begin{multline}\label{get_7}
 \int_0^t  \k(r) \f_{2N}(r)dr \ls \delta \int_0^t \left( \frac{\f_{2N}(0)}{(1+r)^{4N-8}}  + \frac{1}{(1+r)^{4N-7} }\int_0^r \sd_{2N}^0(s) ds \right) \\
 \ls \delta \f_{2N}(0)  \int_0^\infty \frac{dr}{(1+r)^{4N-8}} + \delta \left( \int_0^t \sd_{2N}^0(r) dr \right) \left( \int_0^\infty \frac{dr}{(1+r)^{4N-7}}\right) \\
 \ls \delta \f_{2N}(0) + \delta \int_0^T \sd_{2N}^0(r) dr,
\end{multline}
where here we have used the fact that $N \ge 3$ to guarantee that $(1+r)^{7-4N}$ and $(1+r)^{8-4N}$ are integrable on $(0,\infty)$.  Similarly, we may estimate 
\begin{multline}\label{get_8}
 \int_0^t  \sqrt{  \sd_{2N}^\sigma(r) \k(r) \f_{2N}(r)} dr \le \left( \int_0^t \sd_{2N}^\sigma(r) dr\right)^{1/2} \left(\int_0^t \k(r) \f_{2N}(r) dr \right)^{1/2}  \\
\ls \left( \int_0^t \sd_{2N}^\sigma(r) dr\right)^{1/2} \left( \delta \f_{2N}(0) + \delta \int_0^T \sd_{2N}^0(r) dr\right)^{1/2} \ls \sqrt{\delta} \f_{2N}(0) + \sqrt{\delta} \int_0^t \sd_{2N}^\sigma(r) dr,
\end{multline}
where in the last inequality we have used the bound $\sqrt{a + b} \le \sqrt{a} + \sqrt{b}$ for $a,b \ge 0$, as well as Cauchy's inequality.

Now we plug \eqref{get_7} and \eqref{get_8} into \eqref{get_4}, bound $\se_{2N}^0 \le \g_{2N}^\sigma \le \delta$, and use the fact that $\delta < 1$ implies $\delta \le \sqrt{\delta}$, to arrive at the bound
\begin{equation}
 \se_{2N}^\sigma(t) + \int_0^t \sd_{2N}^\sigma(r) dr \ls \se_{2N}^\sigma(0) + \f_{2N}(0) + \int_0^t (\gamma + \sqrt{\delta}) \sd_{2N}^\sigma(r) dr.
\end{equation}
Thus if $\gamma_0,\delta_0 \in (0,1)$ are chosen to be small enough, we may absorb the $(\gamma + \sqrt{\delta}) \sd_{2N}^\sigma(r)$ term onto the left to deduce that 
\begin{equation}\label{get_9}
 \se_{2N}^\sigma(t) + \int_0^t \sd_{2N}^\sigma(r) dr \ls \se_{2N}^\sigma(0) + \f_{2N}(0).
\end{equation}
Upon combining \eqref{get_6} and \eqref{get_9} we then easily conclude that \eqref{get_0} holds.

\end{proof}

Next we establish the algebraic decay results for the low-tier energy.

\begin{thm}\label{g_est_btm}
There exist $\delta_0,\gamma_0 \in (0,1)$ such that if $0 \le \gamma < \gamma_0$, $0\le \sigma \le 1$, and $\g_{2N}(T) \le \delta \le \delta_0$, then 
\begin{equation}\label{geb_0}
\sup_{0 \le r \le t} (1+r)^{4N-8}\se_{N+2}^\sigma(r)  \ls \se_{2N}^\sigma(0) + \f_{2N}(0)
\end{equation}
for all $0 \le t \le T$.
\end{thm}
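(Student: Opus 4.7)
The strategy is parallel to Theorem \ref{g_est_top} but carried out at the low tier $n = N+2$, closed via interpolation against the uniform high-tier bound from Theorem \ref{g_est_top}. Writing $M_0 := \se_{2N}^\sigma(0) + \f_{2N}(0)$, Theorem \ref{g_est_top} yields $\sup_{0 \le r \le T}\se_{2N}^\sigma(r) + \f_{2N}(r)/(1+r) \ls M_0$, which will be used freely. The first step is to apply Theorem \ref{energy_dissipation_est} at level $n = N+2$, combined with Theorems \ref{energy_comparision}, \ref{dissipation_comparision}, and the nonlinear bounds of Theorem \ref{nlin_G}, to derive a differential inequality of the form
\begin{equation*}
\frac{d}{dt}\bigl(\seb_{N+2}^\sigma + \mathcal{H}_{N+2}\bigr) + \sd_{N+2}^\sigma \ls \gamma \sd_{N+2}^0 + \sqrt{\se_{N+2}^0}\, \sd_{N+2}^\sigma + \k \f_{N+2} + \sqrt{\sd_{N+2}^\sigma \, \k \f_{N+2}}.
\end{equation*}
Since $\gamma \le \gamma_0$ and $\se_{N+2}^0 \le \g_{2N}(T) \le \delta$ are small, and Cauchy's inequality handles the square-root term, the first two terms and the fourth term on the right may be absorbed into $\sd_{N+2}^\sigma$ on the left, leaving $\frac{dE}{dt} + \tfrac{1}{2}\sd_{N+2}^\sigma \ls \k \f_{N+2}$ for $E := \seb_{N+2}^\sigma + \mathcal{H}_{N+2}$. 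Proposition \ref{Hn_estimate} together with Theorem \ref{energy_comparision} ensures $E \simeq \se_{N+2}^\sigma$.

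Next, I would estimate the source: Proposition \ref{kf_ests} gives $\k \ls \se_{N+2}^0$, so $\g_{2N}(T) \le \delta$ yields $\k(t) \ls \delta(1+t)^{-(4N-8)}$, while Sobolev interpolation between $\ns{\eta}_{2N+4} \ls \se_{N+2}^0$ and $\ns{\eta}_{4N} \ls \se_{2N}^0 \ls M_0$ delivers $\f_{N+2}(t) = \ns{\eta(t)}_{2N+9/2} \ls M_0^{1-\alpha} \delta^{\alpha}(1+t)^{-(4N-9)}$ for some $\alpha \in (0,1)$; together, the product $\k \f_{N+2}$ decays at least like $(1+t)^{-(8N-17)}$, strictly faster than $(1+t)^{-(4N-7)}$ for every $N \ge 3$. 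Applying Sobolev interpolation to each term of $\se_{N+2}^\sigma$, the worst gap being realized by $\ns{\eta}_{2N+4}$ (in $\se_{N+2}^\sigma$) against $\ns{\eta}_{2N+7/2}$ (in $\sd_{N+2}^\sigma$) and $\ns{\eta}_{4N}$ (in $\se_{2N}^\sigma$), yields
\begin{equation*}
\se_{N+2}^\sigma \ls (\sd_{N+2}^\sigma)^{\theta}(\se_{2N}^\sigma)^{1-\theta}, \qquad \theta = \frac{4N-8}{4N-7},
\end{equation*}
so invoking Theorem \ref{g_est_top} once more gives $\sd_{N+2}^\sigma \gtrsim M_0^{-1/(4N-8)} (\se_{N+2}^\sigma)^{1+1/(4N-8)}$. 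Substituting into the bound of the previous paragraph produces the ODE inequality
\begin{equation*}
\frac{dE}{dt} + c M_0^{-1/(4N-8)} E^{1+1/(4N-8)} \ls (1+t)^{-(8N-17)},
\end{equation*}
and a direct comparison with the ansatz $E(t) \le A M_0 (1+t)^{-(4N-8)}$ --- choosing $A$ sufficiently large and using that the source decays strictly faster than the nonlinear dissipation term evaluated on the ansatz --- closes the argument.

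The main obstacle is that the source term $\k \f_{N+2}$ depends on the a priori decay encoded in $\g_{2N}(T) \le \delta$, so the decay estimate is only established through a bootstrap with that assumption. The delicate verification is that the source decays strictly faster than $(1+t)^{-(4N-7)}$; otherwise, the homogeneous Bernoulli decay produced by the interpolation would be merely matched, not dominated, by the forcing, and no algebraic decay estimate would follow.
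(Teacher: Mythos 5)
Your overall strategy — apply the level-$(N+2)$ energy-dissipation estimate, use the interpolation $\se_{N+2}^\sigma \ls (\sd_{N+2}^\sigma)^\theta (\se_{2N}^\sigma)^{1-\theta}$ with $\theta = (4N-8)/(4N-7)$, and integrate the resulting Bernoulli-type inequality — matches the paper. But there is a genuine gap in how you handle the nonlinear term $\sqrt{\sd_{N+2}^\sigma\, \k\, \f_{N+2}}$ from Theorem \ref{energy_dissipation_est}. You apply Cauchy's inequality and retain $\k\, \f_{N+2}$ as a time-decaying source, then estimate it using the a priori bound $\se_{N+2}^0(t) \le \delta(1+t)^{-(4N-8)}$ from $\g_{2N}(T) \le \delta$. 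This produces a source of size $\delta^{2-\alpha} M_0^{\alpha}(1+t)^{-(8N-17)}$ with $\alpha = 1/(4N-8)$ and $M_0 := \se_{2N}^\sigma(0)+\f_{2N}(0)$. The barrier $\bar E(t) = A M_0(1+t)^{-(4N-8)}$ satisfies $\bar E' + c M_0^{-s}\bar E^{1+s} = M_0\bigl[cA^{1+s}-(4N-8)A\bigr](1+t)^{-(4N-7)}$, so the supersolution requirement at $t=0$ forces $A\bigl[cA^{s}-(4N-8)\bigr] \gtrsim \delta^{2-\alpha} M_0^{\alpha-1}$. Since $\alpha - 1 < 0$ and $M_0$ may be arbitrarily small relative to $\delta$ (the hypothesis is $\g_{2N}(T)\le\delta$, not $\g_{2N}(T)\simeq M_0$), the right side is unbounded and no universal choice of $A$ works. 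The faster time decay $8N-17 > 4N-7$ that you emphasize is correct but immaterial: the obstruction is the prefactor mismatch, not the exponent. Concretely, your Cauchy step is too lossy — the resulting ODE inequality would allow $E(t)$ to overshoot the bound $E\ls M_0$ already guaranteed by Theorem \ref{g_est_top}, which signals that information has been thrown away.

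The fix, as in the paper, is to avoid Cauchy and instead absorb the square-root term directly: Proposition \ref{kf_ests} gives $\k \ls \sd_{N+2}^0 \le \sd_{N+2}^\sigma$ (not merely $\k\ls\se_{N+2}^0$), and $\f_{N+2} \ls \se_{2N}^0$, so $\sqrt{\sd_{N+2}^\sigma\, \k\, \f_{N+2}} \ls \sqrt{\se_{2N}^0}\, \sd_{N+2}^\sigma \le \sqrt{\delta}\, \sd_{N+2}^\sigma$. Together with the absorption of $\gamma \sd_{N+2}^0$ and $\sqrt{\se_{N+2}^0}\sd_{N+2}^\sigma$, this yields the sourceless differential inequality $\frac{d}{dt}\bigl(\seb_{N+2}^\sigma + \mathcal H_{N+2}\bigr) + C_0 \sd_{N+2}^\sigma \le 0$; combined with the interpolation inequality and $E(0)\ls M_0$, it integrates directly to $E(t) \le \bigl(E(0)^{-s} + c M_0^{-s} t\bigr)^{-1/s} \ls M_0 (1+t)^{-(4N-8)}$ with a universal implicit constant, which is \eqref{geb_0}. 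Separately, note a sign slip in your interpolation of $\f_{N+2}$: the low-tier factor carries the exponent $1-\alpha$ and the high-tier factor the exponent $\alpha$, so the bound is $\delta^{1-\alpha}M_0^{\alpha}(1+t)^{-(4N-9)}$, not $M_0^{1-\alpha}\delta^{\alpha}(1+t)^{-(4N-9)}$; this does not change the diagnosis above.
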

\begin{proof}

Assume $\delta_0$ is as small as in Propositions \ref{infty_ests} and \ref{Hn_estimate}.  The latter allows us to estimate 
\begin{equation}\label{geb_1}
\abs{\H_{N+2}} \ls (\se_{N+2}^0)^{3/2} \ls \sqrt{\se_{2N}^0} \se_{N+2}^0 
\end{equation}
since $N \ge 3$.  As in the proof of Theorem \ref{g_est_top} we will use the bounds on $\sigma$ and $\gamma$ to bound $\ks \ls 1$ throughout the rest of the proof.

Theorems \ref{energy_comparision} and \ref{dissipation_comparision} with $n = N+2$, together with Theorem \ref{nlin_G} and Proposition \ref{kf_ests} provide the bounds 
\begin{equation}\label{geb_2}
\begin{split}
\se_{N+2}^\sigma & \ls  \left(\seb_{N+2}^\sigma + \mathcal{H}_{N+2}\right) + \se_{N+2}^0 \se_{N+2}^\sigma + \sqrt{\se_{2N}^0}\se_{N+2}^0 + \se_{2N}^0 \se_{N+2}^0\text{ and } \\
 \sd_{N+2}^\sigma &\ls \sdb_{N+2} + \se_{N+2}^0 \sd_{N+2}^\sigma + \se_{2N}^0 \se_{N+2}^0.
\end{split}
\end{equation}
Consequently, if $\delta$ is assumed to be small enough we may absorb the $\se_{N+2}^0 \se_{N+2}^\sigma + \sqrt{\se_{2N}^0}\se_{N+2}^0 + \se_{2N}^0 \se_{N+2}^0$ and $\se_{N+2}^0 \sd_{N+2}^\sigma+ \se_{2N}^0 \se_{N+2}^0$ terms onto the left to arrive at the bounds
\begin{equation}\label{geb_3}
 \se_{N+2}^\sigma  \ls  \left(\seb_{N+2}^\sigma + \mathcal{H}_{N+2}\right)  \ls \se_{N+2}^\sigma \text{ and } 
 \sd_{N+2}^\sigma \ls \sdb_{N+2} \le \sd_{N+2}^\sigma.
\end{equation}

Now set 
\begin{equation}\label{geb_4}
 \theta = \frac{4N-8}{4N-7} \in (0,1).
\end{equation}
We claim that we have the interpolation estimate 
\begin{equation}\label{geb_5}
 \se_{N+2}^\sigma \ls \left( \sd_{N+2}^\sigma \right)^\theta \left(\se_{2N}^\sigma \right)^{1-\theta}.
\end{equation}
For most of the terms appearing in $\se_{N+2}^\sigma$ this is a simple matter.  Indeed, the definitions of $\sd_{N+2}^\sigma$ and $\se_{2N}^\theta$ allow us to trivially estimate
\begin{multline}\label{geb_6}
\sum_{j=0}^{N+2} \ns{\dt^j u}_{2(N+2)-2j} + \sum_{j=0}^{N+1} \ns{\dt^j p}_{2(N+2)-2j-1}   + \ns{\dt \eta}_{2(N+2)-1}   +\sum_{j=2}^{N+2} \ns{\dt^j \eta}_{2(N+2)-2j+3/2} \\
\le \left( \sd_{N+2}^\sigma \right)^\theta \left( \se_{2N}^\sigma \right)^{1-\theta}.
\end{multline}
To handle these remaining terms we must use Sobolev interpolation.  We begin with the most important term, which actually dictates the choice of $\theta$:
\begin{equation}\label{geb_7}
\ns{\eta}_{2(N+2)}\le  \norm{\eta}_{2(N+2)-1/2} ^{2\theta} \norm{\eta}_{4N}^{2(1-\theta)}
\ls ( {\mathcal{D}_{N+2}^\sigma})^\theta({\mathcal{E}_{2N}^\sigma})^{1-\theta}.
\end{equation}
Next we bound
\begin{multline}\label{geb_8}
 \sigma \ns{\dt \eta}_{2(N+2)-1/2} \ls \sigma \norm{\dt \eta}_{2(N+2) +1/2} \norm{\dt \eta}_{2(N+2)-3/2} \\
 = \left(\sigma^2 \ns{\dt \eta}_{2(N+2) +1/2} \right)^{1/2} \left(\ns{\dt \eta}_{2(N+2)-3/2}\right)^{\frac{4N-9}{8N-14}} \left( \ns{\dt \eta}_{2(N+2)-3/2}\right)^{\frac{1}{4N-7}} \\
\ls  \left(\sigma^2 \ns{\dt \eta}_{2(N+2) +1/2} \right)^{1/2} \left(\ns{\dt \eta}_{2(N+2)-1}\right)^{\frac{4N-9}{8N-14}} \left( \ns{\dt \eta}_{4N}\right)^{\frac{1}{4N-7}} \\
\ls
\left(\sd_{N+2}^\sigma \right)^{1/2} \left( \sd_{N+2}^\sigma \right)^{\frac{4N-9}{8N-14}} \left(\se_{2N}^\sigma\right)^{\frac{1}{4N-7}} 
= \left(\sd_{N+2}^\sigma \right)^{\theta}  \left(\se_{2N}^\sigma\right)^{1-\theta}. 
\end{multline}
Finally, we use the fact that $\sigma \le 1$ to trivially bound 
\begin{equation}\label{geb_9}
\sigma \sum_{j=2}^{N+2} \ns{\dt^j \eta}_{2(N+2)-2j+1} \le   \sum_{j=2}^{N+2} \ns{\dt^j \eta}_{2(N+2)-2j+1}  \le  (\sd_{N+2}^\sigma)^\theta (\se_{2N}^\sigma)^{1-\theta}. 
\end{equation}
Thus, upon summing \eqref{geb_6}--\eqref{geb_9} we find that the estimate \eqref{geb_5} holds, which proves the claim.

Next we employ Theorem \ref{energy_dissipation_est} with $n=N+2$ in conjunction with Proposition \ref{kf_ests}  see that 
\begin{equation}
\frac{d}{dt} \left( \seb_{N+2}^\sigma  + \mathcal{H}_{N+2} \right) + \sdb_{N+2}  \ls \gamma \sd_{N+2}^0 +
 \sqrt{\se_{N+2}^0 } \sd_{N+2}^\sigma  +   \sqrt{\se_{2N}^0}  \sd_{N+2}^\sigma. 
\end{equation}
We use this together with the bound $\g_{2N}^\sigma(T) \le \delta$ and the dissipation estimates in \eqref{geb_3} to the estimate 
\begin{equation}
\frac{d}{dt} \left( \seb_{N+2}^\sigma  + \mathcal{H}_{N+2} \right) + \sdb_{N+2}  \ls (\gamma + \sqrt{\delta}) \sdb_{N+2}.  
\end{equation}
Consequently, if $\gamma_0$ and $\delta_0$ are taken to be sufficiently small, we may absorb the $\sdb_{N+2}$ term onto the left of this inequality.  Doing so and again invoking the dissipation bounds of \eqref{geb_3} then shows that 
\begin{equation}\label{geb_10}
\frac{d}{dt} \left( \seb_{N+2}^\sigma  + \mathcal{H}_{N+2} \right) + C_0 \sd_{N+2}^\sigma \le 0  
\end{equation}
for a universal constant $C_0 >0$.  We then use the energy estimate in \eqref{geb_3} to to rewrite \eqref{geb_5} as 
\begin{equation}
 \left(\seb_{N+2}^\sigma  + \mathcal{H}_{N+2}\right)^{1/\theta} \ls \sd_{N+2}^\sigma \left( \se_{2N}^\sigma \right)^{(1-\theta)/\theta)}. 
\end{equation}
We chain this together with the estimate of Theorem \ref{g_est_top} to write 
\begin{equation}\label{geb_11}
\frac{C_1}{\mathcal{M}_0^s} \left(\seb_{N+2}^\sigma  + \mathcal{H}_{N+2}\right)^{1+s} \le \sd_{N+2}^\sigma 
\end{equation}
for  $C_1 >0$ a universal constant, 
\begin{equation}
 \mathcal{M}_0 := \se_{2N}(0) + \f_{2N}(0)
\text{ and }
s := \frac{1}{4N-8}.
\end{equation}
Upon combining \eqref{geb_10} and \eqref{geb_11}, we arrive at the differential inequality 
\begin{equation}\label{geb_12}
 \frac{d}{dt} \left( \seb_{N+2}^\sigma  + \mathcal{H}_{N+2} \right) + \frac{C_0 C_1}{\mathcal{M}_0^s} \left( \seb_{N+2}^\sigma  + \mathcal{H}_{N+2} \right)^{1+s} \le 0.
\end{equation}

With the inequality \eqref{geb_12} in hand we may integrate and argue as in the proofs of Theorem 7.7 of \cite{guo_tice_per} or Proposition 8.4 of \cite{jang_tice_wang_gwp} to deduce that 
\begin{equation}\label{geb_13}
 \sup_{0\le r \le t} (1+r)^{4N-8} \left( \seb_{N+2}^\sigma(r)  + \mathcal{H}_{N+2}(r) \right) \ls \mathcal{M}_0 = \se_{2N}(0) + \f_{2N}(0).
\end{equation}
Then \eqref{geb_13} and the energy bound in \eqref{geb_3} yield \eqref{geb_0}.
\end{proof}

As the final step in our a priori estimates for $\g_{2N}^\sigma$ we synthesize Theorems \ref{g_est_top} and \ref{g_est_btm}.

\begin{thm}\label{apriori_vanish}
There exist $\delta_0,\gamma_0 \in (0,1)$ such that if $0 \le \gamma < \gamma_0$, $0\le \sigma \le 1$, and $\g_{2N}(T) \le \delta \le \delta_0$, then 
\begin{equation}
\g_{2N}^\sigma(T) \ls \se_{2N}^\sigma(0) + \f_{2N}(0).
\end{equation}
\end{thm}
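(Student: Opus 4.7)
The proof plan is essentially a synthesis: Theorem \ref{g_est_top} already controls three of the four pieces appearing in the definition \eqref{G_def} of $\g_{2N}^\sigma$, namely
\begin{equation}
\sup_{0\le r\le T}\se_{2N}^\sigma(r)+\int_0^T\sd_{2N}^\sigma(r)\,dr+\sup_{0\le r\le T}\frac{\f_{2N}(r)}{1+r},
\end{equation}
while Theorem \ref{g_est_btm} controls the remaining algebraically-weighted low-tier piece $\sup_{0\le r\le T}(1+r)^{4N-8}\se_{N+2}^\sigma(r)$. So the strategy is simply to invoke both results under a common smallness hypothesis and add.

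First, I would let $\delta_0^{\mathrm{top}},\gamma_0^{\mathrm{top}}$ be the universal constants produced by Theorem \ref{g_est_top} and $\delta_0^{\mathrm{btm}},\gamma_0^{\mathrm{btm}}$ those produced by Theorem \ref{g_est_btm}, and then set
\begin{equation}
\delta_0:=\min\{\delta_0^{\mathrm{top}},\delta_0^{\mathrm{btm}}\}\in(0,1),\qquad \gamma_0:=\min\{\gamma_0^{\mathrm{top}},\gamma_0^{\mathrm{btm}}\}\in(0,1).
\end{equation}
Under the hypotheses $0\le\gamma<\gamma_0$, $0\le\sigma\le 1$, and $\g_{2N}(T)\le\delta\le\delta_0$, the smallness condition $\g_{2N}(T)\le\delta\le\delta_0^{\mathrm{top}}$ and the bound $\gamma<\gamma_0^{\mathrm{top}}$ license Theorem \ref{g_est_top}, yielding \eqref{get_0} for every $t\in[0,T]$; the analogous observation licenses Theorem \ref{g_est_btm}, yielding \eqref{geb_0} for every $t\in[0,T]$.

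To conclude, I would take the supremum over $t\in[0,T]$ in \eqref{get_0} and \eqref{geb_0} and add the two resulting inequalities. Recalling the definition
\begin{equation}
\g_{2N}^\sigma(T)=\sup_{0\le r\le T}\se_{2N}^\sigma(r)+\int_0^T\sd_{2N}^\sigma(r)\,dr+\sup_{0\le r\le T}(1+r)^{4N-8}\se_{N+2}^\sigma(r)+\sup_{0\le r\le T}\frac{\f_{2N}(r)}{1+r},
\end{equation}
the sum of \eqref{get_0} and \eqref{geb_0} gives exactly $\g_{2N}^\sigma(T)\ls\se_{2N}^\sigma(0)+\f_{2N}(0)$, with a universal implicit constant independent of $T$, $\sigma$, $\gamma$, and of the particular solution. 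There is really no obstacle here, since all the work has been absorbed into the two preceding theorems; the only point to verify is that the universal constants from the two theorems can be taken simultaneously, which is automatic by taking minima. The role of this statement in the paper is essentially to package the two bounds into a single closed a priori estimate on $\g_{2N}^\sigma$ that can be coupled to the local existence theorem and a standard continuation argument in the proof of Theorem \ref{gwp_vanish}.
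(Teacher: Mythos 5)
Your proposal is correct and coincides with the paper's proof, which simply states that the result follows by combining Theorems \ref{g_est_top} and \ref{g_est_btm}. The bookkeeping you spell out (taking minima of the two sets of universal constants and summing the resulting bounds to recover $\g_{2N}^\sigma(T)$) is exactly the intended argument.
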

\begin{proof}
We simply combine the estimates of Theorems \ref{g_est_top} and \ref{g_est_btm}.
\end{proof}

\subsection{Main results for the vanishing surface tension problem  }

Now that we have the a priori estimates of of Theorem \ref{apriori_vanish} in hand, we may prove Theorems \ref{gwp_vanish} and \ref{vanishing_st} by following previously developed arguments.  For the sake of brevity we will omit full details and simply refer to the existing arguments.

\begin{proof}[Proof of Theorem \ref{gwp_vanish}]
The stated results follow by combining the local well-posedness theory, Theorem \ref{lwp}, with the a priori estimates of Theorem \ref{apriori_vanish} and a continuation argument.  The details of the continuation argument may be fully developed by following the arguments elaborated in  Theorem 1.3 of \cite{guo_tice_per} or Theorem 2.3 of \cite{jang_tice_wang_gwp}. 
\end{proof}

\begin{proof}[Proof of Theorem \ref{vanishing_st}]

The results follow from the estimates of Theorem \ref{gwp_vanish} and standard compactness arguments.  See Theorem 1.2 of \cite{tan_wang} or Theorem 2.9 of \cite{jang_tice_wang_gwp} for details.
\end{proof}

\section{The fixed surface tension problem }\label{sec_st_gwp}

In this section we study the problem \eqref{ns_geometric} in the case of a fixed $\sigma >0$.  We develop a priori estimates and then present the proof of Theorem \ref{gwp_st}.

\subsection{A priori estimates for $\S_\lambda$ }

In order to prove Theorem \ref{gwp_st} we will introduce the following notation when $\lambda \in (0,\infty)$:
\begin{equation}\label{S_lambda_def}
 \S_\lambda(T) := \sup_{0 \le t \le T} e^{\lambda t} \se_{2}^\sigma(T) + \int_0^T \sd_{2}^\sigma(t) dt 
\end{equation}

Our next result is the analog of Proposition \ref{kf_ests} that we will use in the case of positive surface tension.

\begin{prop}\label{kf_ests_st}
For $n \ge 3$ we have that
\begin{equation}
 \k \ls \min\{\se_{2}^0, \sd_{2}^0\} \text{ and } \f_{2} \le \min\{\sigma^{-1} \se_{2}^\sigma,\sigma^{-2} \sd_{2}^\sigma \}.
\end{equation}
\end{prop}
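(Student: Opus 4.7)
The plan is to prove both halves of the proposition by directly unpacking the definitions in Section \ref{sec_notation} and comparing norms via standard Sobolev embeddings and trace theory. No delicate interpolation is needed; the hypothesis ``$n \ge 3$'' appears to be inherited from the statement of Proposition \ref{kf_ests} and is irrelevant here, since the assertion only references the $n=2$ functionals.

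First I will control $\k$. Using the embedding $H^4(\Omega) \hookrightarrow C^2_b(\Omega)$ (valid in three dimensions since $4 > 3/2 + 2$) together with the trace inequality $\norm{u}_{H^3(\Sigma)} \ls \norm{u}_{H^{7/2}(\Omega)}$, I get
\[
\k \ls \ns{u}_4 + \ns{\eta}_{5/2}.
\]
Then I will read off the definition of $\se_2^0$ in \eqref{E_def} with $n=2$: the $j=0$ bulk velocity summand is $\ns{u}_{2n} = \ns{u}_4$ and the boundary term $\ns{\eta}_{2n}$ is $\ns{\eta}_4$, both of which dominate the terms above after the trivial bound $\ns{\eta}_{5/2} \le \ns{\eta}_4$. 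An analogous inspection of $\sd_2^0$ in \eqref{D_def} with $n=2$ reveals the stronger summands $\ns{u}_{2n+1} = \ns{u}_5$ and $(1+\gamma^2)\ns{\eta}_{2n-1/2} = (1+\gamma^2)\ns{\eta}_{7/2}$, which likewise dominate. This yields $\k \ls \min\{\se_2^0, \sd_2^0\}$.

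Second, I will control $\f_2 = \ns{\eta}_{9/2}$. Specializing \eqref{E_def} and \eqref{D_def} to $n=2$, the surface tension summand $\sigma \ns{\eta}_{2n+1} = \sigma\ns{\eta}_5$ appears inside $\se_2^\sigma$, while the $j=0$ piece $\sigma^2 \ns{\eta}_{2n-2j+3/2} = \sigma^2 \ns{\eta}_{11/2}$ appears inside $\sd_2^\sigma$. Combining these with the trivial embeddings $\ns{\eta}_{9/2} \le \ns{\eta}_5$ and $\ns{\eta}_{9/2} \le \ns{\eta}_{11/2}$ immediately gives $\f_2 \le \sigma^{-1}\se_2^\sigma$ and $\f_2 \le \sigma^{-2}\sd_2^\sigma$.

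There is no substantial obstacle here; the argument is pure bookkeeping. The only point requiring care is tracking the correct power of $\sigma$, which enters linearly in $\se_n^\sigma$ but quadratically in $\sd_n^\sigma$, yielding the asymmetric exponents $\sigma^{-1}$ and $\sigma^{-2}$ in the final bound. This asymmetry is precisely what forces the use of constants of type $\cs$ (rather than $\ks$) in the statement of Theorem \ref{gwp_st}, since the implied constants blow up as $\sigma \to 0$.
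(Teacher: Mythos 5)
Your argument is correct and follows essentially the same route as the paper's: the first inequality is obtained (as in Proposition \ref{kf_ests}) via the Sobolev embedding $H^{7/2+}(\Omega)\hookrightarrow C^2_b(\Omega)$, the trace inequality $\norm{u}_{H^3(\Sigma)}\ls\norm{u}_{H^{7/2}(\Omega)}$, and direct comparison with the $j=0$ summands of $\se_2^0$ and $\sd_2^0$; the second inequality is the same chain $\f_2=\ns{\eta}_{9/2}\le\ns{\eta}_5\le\ns{\eta}_{11/2}$ played against the $\sigma\ns{\eta}_5$ summand of $\se_2^\sigma$ and the $\sigma^2\ns{\eta}_{11/2}$ summand of $\sd_2^\sigma$. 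Your side remarks are also accurate: the ``$n\ge 3$'' hypothesis is a vestige of Proposition \ref{kf_ests} (the claim involves only $n=2$ functionals), and the asymmetric $\sigma^{-1}$ vs.\ $\sigma^{-2}$ factors are exactly what drives the appearance of $\cs$-type constants downstream.
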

\begin{proof}
The first inequality follows as in the proof of Proposition \ref{kf_ests}.  The second follows since $\f_{4} = \ns{\eta}_{4+1/2} \le \ns{\eta}_{5} \le \min\{\sigma^{-1} \se_{2}^\sigma, \sigma^{-1} \se_{2}^\sigma\}$
\end{proof}

We now develop the main a priori estimates with surface tension.

\begin{thm}\label{apriori_st}
There exist $\delta_0,\gamma_0 \in (0,1)$ such that if $0 \le \gamma < \gamma_0$ and  $\S_{0}(T) \le \delta \le \delta_0$, then there exists $\lambda = \lambda(\sigma,\gamma) >0$ such that 
\begin{equation}\label{set_0}
\S_{\lambda}(T) \ls \cs \se_{2}^\sigma(0).
\end{equation}
\end{thm}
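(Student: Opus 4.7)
The plan is to close an exponential Grönwall-type argument for $\se_2^\sigma$ in the presence of surface tension, leveraging the fact that when $\sigma>0$ the dissipation functional $\sd_2^\sigma$ becomes coercive over the energy functional $\se_2^\sigma$, at the cost of a constant $\cs$ that blows up as $\sigma\to 0$. This distinguishes the present estimate sharply from the algebraic-decay analysis of the previous section.

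First I will specialize Theorem \ref{energy_dissipation_est} to $n=2$ in order to obtain
\begin{equation*}
\frac{d}{dt}\bigl(\seb_2^\sigma+\mathcal{H}_2\bigr)+\sdb_2 \ls \gamma\,\sd_2^0+\ks\bigl[\sqrt{\se_2^0}\,\sd_2^\sigma+\sqrt{\sd_2^\sigma\,\k\,\f_2}\bigr].
\end{equation*}
The key observation, and the main departure from the vanishing-surface-tension analysis of Section \ref{sec_nost_gwp}, is Proposition \ref{kf_ests_st}: when $\sigma>0$ we have the bound $\f_2\ls \sigma^{-1}\se_2^\sigma$. Combined with $\k\ls \se_2^0$ this yields $\k\f_2\ls \sigma^{-1}\se_2^0\,\se_2^\sigma$, so by Cauchy's inequality with a small parameter we may dominate the problematic $\sqrt{\sd_2^\sigma\,\k\,\f_2}$ term by $\epsilon\,\sd_2^\sigma+\cs\,\se_2^0\,\se_2^\sigma$. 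Under the smallness hypothesis $\S_0(T)\le\delta$, which in particular bounds $\se_2^0$, this entire product is controlled and can be absorbed after appropriate use of the comparison estimates.

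Next I will couple this with the comparison estimates. Theorem \ref{dissipation_comparision} and Theorem \ref{nlin_G}, applied at level $n=2$ and combined with Proposition \ref{kf_ests_st}, give $\sd_2^\sigma\ls\ks(\sdb_2+\se_2^0\sd_2^\sigma+\k\f_2)\ls\ks\sdb_2+\cs\,\se_2^0\,\se_2^\sigma$ once the $\se_2^0\sd_2^\sigma$ term is absorbed for small $\delta$. Similarly, Theorem \ref{energy_comparision}, Theorem \ref{nlin_G}, Proposition \ref{Hn_estimate}, and Proposition \ref{kf_ests_st} provide $\se_2^\sigma\ls\ks(\seb_2^\sigma+\mathcal{H}_2)$ after absorbing small-in-$\delta$ nonlinear contributions. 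Substituting back into the evolution inequality, choosing $\delta_0$ small enough to absorb the $\sqrt{\se_2^0}\,\sd_2^\sigma$ term, and choosing $\gamma_0$ small enough so that the factor of $\gamma$ dominates the implicit constant multiplying $\sd_2^0\le\sd_2^\sigma$, I obtain the clean differential inequality
\begin{equation*}
\frac{d}{dt}\bigl(\seb_2^\sigma+\mathcal{H}_2\bigr)+C_0\,\sd_2^\sigma\le 0
\end{equation*}
for a constant $C_0$ that may depend on $\sigma$ and $\gamma$.

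The crux of the argument is now a coercivity claim: there exists a constant $\cs>0$ such that $\se_2^\sigma\le\cs\,\sd_2^\sigma$. To prove this I will check each summand in the definition \eqref{E_def} against the corresponding summand in \eqref{D_def}. For velocity and pressure terms, the dissipation controls one spatial derivative more than the energy, so these are immediate. For the free-boundary quantities $\ns{\eta}_{2n}$, $\sigma\ns{\eta}_{2n+1}$, $\sigma\ns{\dt\eta}_{2n-1/2}$, and the higher $\dt^j\eta$ norms, I will interpolate between adjacent dissipation norms (for instance $\ns{\eta}_4\le \ns{\eta}_{7/2}^{\theta}\ns{\eta}_{11/2}^{1-\theta}$ with $\theta=3/4$, and similarly for $\sigma\ns{\eta}_5$ interpolating between $\ns{\eta}_{7/2}$ and $\sigma^2\ns{\eta}_{11/2}$). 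The $\sigma$-weights on the dissipation side combined with Young's inequality yield the bound at the price of a constant depending polynomially on $\sigma^{-1}$, i.e.\ of type $\cs$. Combining this coercivity with the equivalence $\se_2^\sigma\simeq\seb_2^\sigma+\mathcal{H}_2$ (valid under the smallness assumption, with $\cs$ constants), we upgrade the inequality to
\begin{equation*}
\frac{d}{dt}\bigl(\seb_2^\sigma+\mathcal{H}_2\bigr)+2\lambda\bigl(\seb_2^\sigma+\mathcal{H}_2\bigr)\le 0
\end{equation*}
for some $\lambda=\lambda(\sigma,\gamma)>0$. Grönwall's inequality then yields $\se_2^\sigma(t)\le \cs e^{-2\lambda t}\se_2^\sigma(0)$, while integrating the original differential inequality in time gives $\int_0^T\sd_2^\sigma\le\cs\se_2^\sigma(0)$. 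Adjusting the rate to $\lambda$ and summing the two bounds produces \eqref{set_0}. The main technical obstacle will be the interpolation-based coercivity $\se_2^\sigma\le\cs\sd_2^\sigma$ with explicit control of the $\sigma$-dependence, since one must verify that every $\eta$-norm in $\se_2^\sigma$ falls between two $\eta$-norms appearing in $\sd_2^\sigma$ with compatible $\sigma$-weights.
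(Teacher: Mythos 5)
Your proposal is correct and follows essentially the same route as the paper: energy-dissipation evolution at $n=2$, comparison estimates to establish the equivalences $\se_2^\sigma \simeq \seb_2^\sigma+\mathcal{H}_2$ and $\sd_2^\sigma\simeq\sdb_2$, Proposition \ref{kf_ests_st} to absorb the $\k\f_2$ product, coercivity $\se_2^\sigma\le\cs\sd_2^\sigma$, and Gr\"onwall. The only cosmetic difference is that you verify the coercivity by Sobolev interpolation between adjacent dissipation norms, whereas the paper treats it as immediate from the definitions via direct term-by-term comparison (e.g.\ $\ns{\eta}_4\le\sigma^{-2}\cdot\sigma^2\ns{\eta}_{11/2}$); both are valid and yield constants of type $\cs$.
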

\begin{proof}

Throughout the proof we will employ constants of the form $\cs$ and $\ks$.  We recall that the meaning of these is stated in Section \ref{sec_notation}.

We first assume that $\delta_0$ is as small as in Proposition \ref{infty_ests}.  We also assume that $\delta_0$ is as small as in Proposition \ref{Hn_estimate} so that $\abs{\H_{2}} \ls (\se_{2}^0)^{3/2}$.   

We use Theorems \ref{energy_comparision}, \ref{dissipation_comparision}, and \ref{nlin_G} with $n = 2$ to bound 
\begin{equation}\label{set_1}
 \se_{2}^\sigma \ls \ks \left( (\seb_{2}^\sigma + \mathcal{H}_{2}) + \se_{2}^0 \se_{2}^\sigma + (\se_{2}^0)^{3/2} + \k \f_{2} \right) \text{ and } \sd_{2}^\sigma \ls \ks \left( \sdb_{2} + \se_{2}^0 \sd_{2}^\sigma + \k \f_{2} \right).
\end{equation}
Thus, if $\delta_0$ is small enough we may absorb the terms $\se_{2}^0 \se_{2}^\sigma + (\se_{2}^0)^{3/2}$ and $\se_{2}^0 \sd_{2}^\sigma$ terms onto the left to see that
\begin{equation}\label{set_2}
 \se_{2}^\sigma \ls \ks \left( \left( \seb_{2}^\sigma + \mathcal{H}_{2} \right) + \k \f_{2} \right) \text{ and } \sd_{2}^\sigma \ls \ks \left(\sdb_{2} + \k \f_{2} \right).
\end{equation}
Next we use Proposition \ref{kf_ests_st} to bound the product $\k \f_{2}$: 
\begin{equation}\label{set_{2}}
 \k \f_{2} \ls \cs \se_{2}^\sigma \min\{ \se_{2}^\sigma, \sd_{2}^\sigma\}.
\end{equation}
Plugging this into \eqref{set_2} then shows that 
\begin{equation}
 \se_{2}^\sigma \ls \cs \left( \left( \seb_{2}^\sigma + \mathcal{H}_{2} \right) + \se_{2}^\sigma \se_{2}^\sigma \right) \text{ and } \sd_{2}^\sigma \ls \ks \left(\sdb_{2} + \se_{2}^\sigma \sd_{2}^\sigma \right),
\end{equation}
and thus if we further restrict the size of $\delta_0$ we may again use an absorbing argument to conclude that 
\begin{equation}\label{set_4}
 \seb_{2}^\sigma + \mathcal{H}_{2} \le \se_{2}^\sigma \ls \cs (\seb_{2}^\sigma + \mathcal{H}_{2} ) \text{ and } \sdb_{2} \le \sd_{2}^\sigma \ls \cs \sdb_{2}.
\end{equation}

Next we use Theorem \ref{energy_dissipation_est} with $n = 2$ to see that 
\begin{equation}
\frac{d}{dt} \left( \seb_{2}^\sigma  + \mathcal{H}_{2} \right) + \sdb_{2}  \ls \gamma \sd_{2}^0 +
\ks \left[ \sqrt{\se_{2}^0 } \sd_{2}^\sigma  +   \sqrt{  \sd_{2}^\sigma \k \f_{2}} \right] \ls
\gamma \cs \sdb_{2} + \cs  \sqrt{\se_{2}^\sigma } \sdb_{2}^\sigma  
\end{equation}
where in the last inequality we have used \eqref{set_{2}} and \eqref{set_4}.  We may then further restrict the size of $\delta_0$ and $\gamma_0$ in order to absorb the terms on the right onto the left.  This yields the inequality 
\begin{equation}\label{set_5}
\frac{d}{dt} \left( \seb_{2}^\sigma  + \mathcal{H}_{2} \right) + \hal \sdb_{2}  \le 0. 
\end{equation}

By definition we have the estimate $\se_{2}^\sigma \le \cs \sd_{2}^\sigma$.  Using this with \eqref{set_4}, we may then bound  
\begin{equation}\label{set_6}
 \hal \sdb_{2} \ge \frac{1}{4 \cs} \sd_{2}^\sigma + \frac{1}{4 \cs} \se_{2}^\sigma \ge \frac{1}{4 \cs} \sd_{2}^\sigma + \lambda \left( \seb_{2}^\sigma  + \mathcal{H}_{2} \right)
\end{equation}
for a constant $\lambda = \lambda(\sigma,\gamma) >0$.  Combining \eqref{set_5} with the differential inequality \eqref{set_5} then shows that
\begin{equation}
 \frac{d}{dt} \left( \seb_{2}^\sigma  + \mathcal{H}_{2} \right)  + \lambda \left( \seb_{2}^\sigma  + \mathcal{H}_{2} \right) + \frac{1}{4\cs} \sd_{2}^\sigma  \le 0.
\end{equation}
This differential inequality may be integrated to see that 
\begin{equation}
e^{\lambda t} \left( \seb_{2}^\sigma(t)  + \mathcal{H}_{2}(t) \right) + \frac{1}{4\cs} \int_0^t e^{\lambda r} \sd_{2}^\sigma(r) dr \le \left( \seb_{2}^\sigma(0)  + \mathcal{H}_{2}(0) \right).
\end{equation}
Once again appealing to \eqref{set_4}, we deduce from this that 
\begin{equation}
 \sup_{0 \le t \le T} e^{\lambda t} \se_{2}^\sigma(t) + \int_0^T \sd_{2}^\sigma(t) dt \ls \cs \se_{2}^\sigma(0),
\end{equation}
which is \eqref{set_0}.

\end{proof}

\subsection{Proof of main result }

We now parlay Theorem \ref{apriori_st} into the proof of Theorem \ref{gwp_st}.  

\begin{proof}[Proof of Theorem \ref{gwp_st}]
The proof follows that of Theorem \ref{gwp_st}, combining the local existence result with the a priori estimates of Theorem \ref{apriori_st} and a continuation argument.  We again refer to \cite{guo_tice_per} or \cite{jang_tice_wang_gwp} for details.
\end{proof}

\appendix

\section{Elliptic estimates }\label{app_elliptics}

Here we record basic elliptic estimates for the capillary and Stokes operators.

\subsection{Capillary operator}

Consider the elliptic problem 
\begin{equation}\label{capillary_problem}
 -\sigma \Delta \psi + g \psi = f  \text{ on } \T^n.
\end{equation}
If $f \in H^{-1}(\T^n) = (H^1(\T^n))^\ast$, then a weak solution is readily found with a standard application of Riesz's representation theorem: there exists a unique $\psi \in H^1(\T^n)$ such that 
\begin{equation}
\int_{\T^n} g \psi \varphi + \sigma \nabla \psi \cdot \nabla \varphi = \br{f,\varphi} \text{ for all } \varphi \in H^1(\T^n).
\end{equation}

\begin{thm}\label{cap_elliptic}
Let $s \ge 0$ and suppose that $f \in H^s(\T^n) \hookrightarrow H^{-1}(\T^n)$.  Let $\psi \in H^1(\T^n)$ be the weak solution to \eqref{capillary_problem}.  Then $\psi \in H^{s+2}(\T^n)$ and 
\begin{equation}\label{cap_elliptic_01}
 \norm{\psi}_{s} \le \frac{1}{g} \norm{f}_s \text{ and } \norm{D^{2+s} \psi}_0 \ls \frac{1}{\sigma} \norm{D^{s} f}_{0},
\end{equation}
where $D = \sqrt{-\Delta}$.  Moreover, if $\int_{\T^n} \psi =0$, then 
\begin{equation}\label{cap_elliptic_02}
 \norm{\psi}_{s+2} \ls \frac{1}{\sigma} \norm{D^s f}_{0}.
\end{equation}
\end{thm}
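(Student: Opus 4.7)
The plan is to prove all three estimates simultaneously via Fourier analysis on the torus, exploiting the fact that the operator $-\sigma \Delta + g$ becomes a diagonal Fourier multiplier. Let $\Lambda \subset \R^n$ denote the dual lattice of $\T^n$, and write $\psi = \sum_{k \in \Lambda} \hat{\psi}(k) e^{2\pi i k \cdot x}$ and similarly for $f$. Then the weak formulation of \eqref{capillary_problem} is equivalent to the identities
\begin{equation}
(\sigma \abs{k}^2 + g) \hat{\psi}(k) = \hat{f}(k) \quad \text{for all } k \in \Lambda,
\end{equation}
and Plancherel identifies $\ns{\psi}_s = \sum_k (1+\abs{k}^2)^s \abs{\hat{\psi}(k)}^2$ and $\ns{D^s \psi}_0 = \sum_k \abs{k}^{2s} \abs{\hat{\psi}(k)}^2$. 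I would first observe the two elementary multiplier bounds
\begin{equation}
\frac{1}{\sigma \abs{k}^2 + g} \le \frac{1}{g} \text{ and } \frac{\abs{k}^2}{\sigma \abs{k}^2 + g} \le \frac{1}{\sigma},
\end{equation}
the first holding trivially and the second by dropping $g$ from the denominator.

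For the first estimate in \eqref{cap_elliptic_01}, I would combine the first multiplier bound with Plancherel to get
\begin{equation}
\ns{\psi}_s = \sum_{k} (1+\abs{k}^2)^s \frac{\abs{\hat f(k)}^2}{(\sigma\abs{k}^2+g)^2} \le \frac{1}{g^2} \ns{f}_s.
\end{equation}
For the second estimate in \eqref{cap_elliptic_01}, I would use the second multiplier bound (squared, which gives $\abs{k}^4/(\sigma\abs{k}^2+g)^2 \le 1/\sigma^2$) to obtain
\begin{equation}
\ns{D^{2+s}\psi}_0 = \sum_k \abs{k}^{2s}\,\frac{\abs{k}^4 \abs{\hat f(k)}^2}{(\sigma\abs{k}^2+g)^2} \le \frac{1}{\sigma^2}\ns{D^s f}_0.
\end{equation}
The Sobolev regularity $\psi \in H^{s+2}$ follows from combining these two bounds.

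For the zero-average estimate \eqref{cap_elliptic_02}, the hypothesis $\int_{\T^n}\psi = 0$ translates to $\hat\psi(0) = 0$, so the Fourier sum defining $\ns{\psi}_{s+2}$ is supported on the nonzero modes $k \in \Lambda \setminus \{0\}$. On this set there is a universal lower bound $\abs{k} \ge k_\ast > 0$ depending only on the lattice (hence only on $\Omega$), so $(1+\abs{k}^2)^{s+2} \ls \abs{k}^{2(s+2)}$ uniformly in $k$, and the previous calculation yields $\ns{\psi}_{s+2} \ls \ns{D^{2+s}\psi}_0 \ls \sigma^{-2}\ns{D^s f}_0$. The main (minor) subtlety is precisely this last step: without the zero-average hypothesis the constant mode $\hat\psi(0) = \hat f(0)/g$ would only be bounded by $g^{-1}\norm{f}_0$ rather than by $\sigma^{-1}\norm{D^s f}_0$, which would fail as $\sigma \to 0$. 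This is why \eqref{cap_elliptic_02} genuinely requires the vanishing-mean condition, while \eqref{cap_elliptic_01} does not.
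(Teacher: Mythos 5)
Your proof is correct and rests on essentially the same Fourier-multiplier idea as the paper's. The paper phrases the identical computation as a regularized energy estimate: it tests the weak formulation against the band-limited functions $\Lambda_k^{2s}\psi$ and $\dot\Lambda_k^{2s+2}\psi$ (symbols $(1+\abs{\xi}^2)^s \vchi_{B(0,k)}$ and $\abs{\xi}^{2s+2}\vchi_{B(0,k)}$), absorbs via Cauchy--Schwarz, and sends $k\to\infty$ --- which, once read on the Fourier side, is exactly your pointwise bound on the multiplier $(\sigma\abs{k}^2 + g)^{-1}$ together with the lattice gap $\abs{k}\ge k_\ast$ (equivalently, Poincar\'e) for the zero-mean case.
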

\begin{proof}
We begin with the proof of \eqref{cap_elliptic_01}.  For $k \in \N$ and $r \in \R$ let $\Lambda_k^r$ denote the operator whose Fourier symbol is $(1+ \abs{\xi}^2)^{r/2} \vchi_{B(0,k)}(\xi)$.  We use $\Lambda_k^{2s} \psi$  as a test function and use the obvious properties of the operator $\Lambda_k^{2s}$ to arrive at the bound 
\begin{equation}
 g \ns{\Lambda_k^s \psi}_0 + \sigma \ns{\nab \Lambda_k^s \psi}_0 = \int_{\T^n} \Lambda_k^s f \Lambda_k^s \psi \le \frac{1}{2g} \ns{\Lambda_k^s f}_0 + \frac{g}{2} \ns{\Lambda_k^s \psi}_0.
\end{equation}
From this we deduce that 
\begin{equation}
 \ns{\Lambda_k^s \psi}_0 \le \frac{1}{g^2} \ns{\Lambda_k^s f}_0,
\end{equation}
and the first bound in \eqref{cap_elliptic_01} then follows by sending $k \to \infty$.  

To prove the second bound in \eqref{cap_elliptic_01} we employ the operator $\dot{\Lambda}_k^{r}$, whose Fourier symbol is $\abs{\xi}^r  \vchi_{B(0,k)}(\xi)$.  Using $\dot{\Lambda}_k^{2+2s} \psi$ and arguing as above, but absorbing with the $\sigma$ term on the left, then allows us to deduce the second bound in \eqref{cap_elliptic_01}.  The bound \eqref{cap_elliptic_02} follows by using  $\Lambda_k^{2+2s} \psi$ as a test function and applying the Poincar\'{e} inequality.
\end{proof}

\subsection{Perturbed Stokes with Dirichlet conditions}

Consider the problem 
\begin{equation}\label{stokes_dir_prob}
\begin{cases}
s \p_1 u  + \diverge S(p,u)  = f^1 & \text{in }\Omega \\
\diverge u =f^2 &\text{in }\Omega \\
u = f^3 & \text{on }  \Sigma \\
u =0 & \text{on } \Sigma_b.
\end{cases}
\end{equation}

The estimates for solutions are recorded in the following result, the proof of which is standard and thus omitted.

\begin{thm}\label{stokes_dirichlet_est}
Let $m \in \mathbb{N}$.  If $f^1 \in H^m(\Omega)$, $f^2 \in H^{m+1}(\Omega)$, and $f^3 \in H^{m+3/2}(\Sigma)$, then the solution pair $(u,p)$ to \eqref{stokes_dir_prob} satisfies  $u \in H^{m+2}(\Omega)$, $\nab p \in H^{m+1}(\Omega)$, and we have the estimate
\begin{equation}
 \norm{u}_{m+2} + \norm{\nab p}_{m} \ls \norm{f^1}_{m} +\norm{f^2}_{m+1}+ \norm{f^3}_{m+3/2}.
\end{equation}
\end{thm}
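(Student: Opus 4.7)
The strategy is to view the system \eqref{stokes_dir_prob} as a lower-order perturbation of the classical Stokes problem with full Dirichlet data on $\partial\Omega = \Sigma \cup \Sigma_b$. Since $s(x_3) = \frac{\gamma}{2}(b^2 - x_3^2)$ is smooth, bounded, and depends only on $x_3$, the term $s\p_1 u$ involves only first derivatives of $u$ and can be treated perturbatively. The plan is to (i) reduce to homogeneous boundary data, (ii) produce a weak solution in $H^1 \times L^2$, and (iii) iterate a classical Cattabriga--Temam regularity estimate to pump up to $H^{m+2}\times H^{m+1}$.

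First I would perform the boundary lifting. By standard trace theory on the flat, periodic boundaries of $\Omega$, pick $\bar u \in H^{m+2}(\Omega;\R^3)$ with $\bar u|_\Sigma = f^3$, $\bar u|_{\Sigma_b}=0$, and $\norm{\bar u}_{m+2}\ls \norm{f^3}_{m+3/2}$. Writing $u = \bar u + v$, the pair $(v,p)$ satisfies the same type of system with homogeneous Dirichlet data on $\p\Omega$, modified bulk source $\tilde f^1 := f^1 - s\p_1 \bar u - \diverge S(0,\bar u) \in H^m(\Omega)$, and modified divergence source $\tilde f^2 := f^2 - \diverge \bar u \in H^{m+1}(\Omega)$. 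The compatibility $\int_\Omega \tilde f^2 = 0$, required for solvability with $v$ vanishing on all of $\p\Omega$, follows from the necessary compatibility of the original data $\int_\Omega f^2 = \int_\Sigma f^3\cdot e_3$ (which is needed for \eqref{stokes_dir_prob} to admit any solution at all).

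Next I would construct a weak solution. The crucial algebraic observation is that because $s$ depends only on $x_3$ and $\Sigma$ is periodic in $x_1$, one has
\begin{equation}
\int_\Omega (s \p_1 v)\cdot v = \int_\Omega s \p_1\left(\tfrac{1}{2}\abs{v}^2\right) = 0,
\end{equation}
so the full bilinear form $(v,\varphi) \mapsto \int_\Omega \tfrac{1}{2}\sg v:\sg \varphi + (s\p_1 v)\cdot \varphi$ remains coercive on the divergence-free subspace of $H^1_0(\Omega;\R^3)$. A standard Galerkin / inf-sup argument (as in Temam's treatment of the Stokes problem) then produces a unique $v\in H^1_0$ and associated pressure $p \in L^2$ (normalized by zero average) satisfying the basic energy bound in terms of the data.

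Finally I would run the bootstrap. Rewriting the equation in the form
\begin{equation}
-\Delta v + \nab p = \tilde f^1 - s\p_1 v, \quad \diverge v = \tilde f^2, \quad v|_{\p\Omega}=0,
\end{equation}
and applying classical elliptic regularity for the homogeneous-Dirichlet Stokes system gives, for each $k\ge 0$,
\begin{equation}
\norm{v}_{k+2} + \norm{\nab p}_k \ls \norm{\tilde f^1 - s\p_1 v}_k + \norm{\tilde f^2}_{k+1}.
\end{equation}
At each stage $\norm{s\p_1 v}_k \ls \norm{v}_{k+1}$ is controlled by the previous step (with the $k=0$ base case absorbed via the energy estimate), so iterating from $k=0$ up to $k=m$ yields $\norm{v}_{m+2} + \norm{\nab p}_m \ls \norm{\tilde f^1}_m + \norm{\tilde f^2}_{m+1}$, and returning to $u=\bar u + v$ produces the claimed bound.

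The only real obstacle is organizational: one must carefully keep track of how the lower-order convective term $s\p_1 u$ is absorbed at each step of the bootstrap and verify the divergence compatibility after lifting $f^3$. Once those bookkeeping points are handled, the estimate is a direct consequence of the classical Stokes regularity theory, which is why the author omits the details.
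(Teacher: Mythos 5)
The paper omits the proof of this result, declaring it standard. Your argument is the standard one: lift the inhomogeneous Dirichlet data (noting that the divergence compatibility for the lifted problem follows from the necessary compatibility $\int_\Omega f^2 = \int_\Sigma f^3\cdot e_3$ of the original data), produce a weak solution using coercivity of the bilinear form (where the crucial point that $\int_\Omega (s\partial_1 v)\cdot v = 0$ by periodicity and $\partial_1 s = 0$ is correctly identified), and then bootstrap to full regularity via the classical Cattabriga–Temam estimates, absorbing the lower-order term $s\partial_1 u$ at each step. This is a complete and correct proof; the only minor caveat is that the implied constant produced by your bootstrap is a polynomial in $\gamma$ (through $\|s\|_{C^{m}}$), which is harmless for the paper's purposes since its consumers (e.g.\ Theorem \ref{dissipation_comparision}) ultimately allow constants of the form $\ks$ that grow polynomially in $\gamma$.
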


\subsection{Perturbed Stokes with stress conditions}

Consider the problem 
\begin{equation}\label{stokes_stress_prob}
\begin{cases}
s \p_1 u  + \diverge S(p,u)  = f^1 & \text{in }\Omega \\
\diverge u =f^2 &\text{in }\Omega \\
S(p,u) e_3  = f^3 & \text{on }  \Sigma \\
u =0 & \text{on } \Sigma_b.
\end{cases}
\end{equation}

The estimates for solutions are recorded in the following result, the proof of which is standard and thus omitted.

\begin{thm}\label{stokes_stress_est}
Let $m \in \mathbb{N}$.  If $f^1 \in H^m(\Omega)$, $f^2 \in H^{m+1}(\Omega)$, and $f^3 \in H^{m+1/2}(\Sigma)$,  then the solution pair $(u,p)$ to \eqref{stokes_stress_prob} satisfies $u \in H^{m+2}(\Omega)$, $p \in H^{m+1}(\Omega)$, and we have the estimate
\begin{equation}
 \norm{u}_{m+2} + \norm{p}_{m+1} \ls \norm{f^1}_{m} +\norm{f^2}_{m+1}+ \norm{f^3}_{m+1/2}.
\end{equation}
\end{thm}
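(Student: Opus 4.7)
The plan is to view $s\p_1 u$ as a lower-order perturbation of the standard Stokes system and to appeal to the classical elliptic theory for the Stokes equations with mixed Dirichlet--stress boundary conditions. First I would invoke (or prove via the usual recipe) the unperturbed baseline estimate: for any $(v,q)$ solving
\begin{equation*}
\diverge S(q,v) = F \text{ in } \Omega, \quad \diverge v = f^2 \text{ in } \Omega, \quad S(q,v) e_3 = f^3 \text{ on } \Sigma, \quad v = 0 \text{ on } \Sigma_b,
\end{equation*}
one has $\norm{v}_{m+2} + \norm{q}_{m+1} \ls \norm{F}_m + \norm{f^2}_{m+1} + \norm{f^3}_{m+1/2}$. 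This is obtained in the usual way: lift the divergence constraint by constructing an auxiliary field, pass to a divergence-free variational formulation, apply Korn's inequality for existence of a weak solution, recover the pressure via the LBB inf-sup condition, promote to horizontal regularity via tangential difference quotients on the torus $\Sigma$, and then obtain normal regularity by algebraically rearranging the equations.

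Applied to \eqref{stokes_stress_prob} with $F := f^1 - s\p_1 u$, this baseline estimate gives
\begin{equation*}
\norm{u}_{m+2} + \norm{p}_{m+1} \ls \norm{f^1}_m + \norm{s\p_1 u}_m + \norm{f^2}_{m+1} + \norm{f^3}_{m+1/2}.
\end{equation*}
Since $s(x_3)=\frac{\gamma}{2}(b^2-x_3^2)$ is a fixed smooth function with $\norm{s}_{C^k(\bar\Omega)}$ bounded, the product estimate yields $\norm{s\p_1 u}_m \ls \norm{u}_{m+1}$. Using standard Sobolev interpolation $\norm{u}_{m+1} \le \epsilon \norm{u}_{m+2} + C_\epsilon \norm{u}_0$ and choosing $\epsilon$ small enough to absorb the top-order term onto the left, matters reduce to controlling $\norm{u}_0$ by the data.

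To close this final estimate, I would subtract a Bogovskii-type extension $u_\ast$ satisfying $\diverge u_\ast = f^2$ in $\Omega$ and $u_\ast = 0$ on $\Sigma_b$, with $\norm{u_\ast}_1 \ls \norm{f^2}_0$, and then test the momentum equation against the divergence-free perturbation $w = u - u_\ast$. The key observation, which is the only place the presence of the shear needs genuine care, is that
\begin{equation*}
\int_\Omega (s \p_1 w) \cdot w = \hal \int_\Omega s \, \p_1 \abs{w}^2 = -\hal \int_\Omega (\p_1 s) \abs{w}^2 = 0,
\end{equation*}
because $s = s(x_3)$. Hence the shear term contributes nothing to the basic energy identity, and Korn's inequality together with the stress boundary condition provides $\norm{w}_1 \ls \norm{f^1}_0 + \norm{f^2}_0 + \norm{f^3}_{-1/2}$, from which the LBB condition recovers $\norm{p}_0$. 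The main (mild) obstacle is simply bookkeeping: verifying that the shear coefficient interacts harmlessly with all of the above steps, which it does thanks to its smoothness and its dependence only on the vertical variable.
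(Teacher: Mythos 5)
The paper omits the proof entirely, declaring it standard; your reconstruction is precisely the standard argument, so there is no divergence in approach to report. The perturbative reduction to the classical Stokes system with mixed Dirichlet--stress conditions, the interpolation--absorption of $\norm{s\p_1 u}_m$, the cancellation $\int_\Omega (s\p_1 w)\cdot w = 0$ from $\p_1 s = 0$ and horizontal periodicity, and the pressure recovery via the inf-sup condition together with the normalization supplied by the stress condition on $\Sigma$ are exactly the right ingredients, and they close. One small point worth tightening: the Bogovskii-type lifting with $\norm{u_\ast}_1 \ls \norm{f^2}_0$ requires a compensating correction, since $f^2$ need not have zero mean; because you only impose $u_\ast = 0$ on the partial boundary $\Sigma_b$, you can write $f^2 = \bar f^2 + \tilde f^2$ with $\tilde f^2$ mean-zero, lift $\tilde f^2$ by the usual Bogovskii operator and handle the constant part with an explicit field such as $\bar f^2 (x_3 + b) e_3$, which vanishes on $\Sigma_b$ and has divergence $\bar f^2$.
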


\section{Analytic tools}

\subsection{Products in Sobolev spaces}
Here we record some product estimates in Sobolev spaces.

\begin{lem}\label{sobolev_product_1}
The following hold on $\Sigma$ and on $\Omega$.
\begin{enumerate}
 \item Let $0\le r \le s_1 \le s_2$ be such that  $s_1 > n/2$.  Let $f\in H^{s_1}$, $g\in H^{s_2}$.  Then $fg \in H^r$ and
\begin{equation}\label{s_p_01}
 \norm{fg}_{H^r} \lesssim \norm{f}_{H^{s_1}} \norm{g}_{H^{s_2}}.
\end{equation}

\item Let $0\le r \le s_1 \le s_2$ be such that  $s_2 >r+ n/2$.  Let $f\in H^{s_1}$, $g\in H^{s_2}$.  Then $fg \in H^r$ and
\begin{equation}\label{s_p_02}
 \norm{fg}_{H^r} \lesssim \norm{f}_{H^{s_1}} \norm{g}_{H^{s_2}}.
\end{equation}

\item Let $0\le r \le s_1 \le s_2$ be such that  $s_2 >r+ n/2$. Let $f \in H^{-r}(\Sigma),$ $g \in H^{s_2}(\Sigma)$.  Then $fg \in H^{-s_1}(\Sigma)$ and
\begin{equation}\label{s_p_03}
 \norm{fg}_{-s_1} \ls \norm{f}_{-r} \norm{g}_{s_2}.
\end{equation}
\end{enumerate}
\end{lem}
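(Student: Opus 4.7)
The plan is to establish all three items using a combination of Bony's paraproduct decomposition (which on $\Sigma$ is classical via Fourier series, and on $\Omega$ can be imported after extension by reflection across $\Sigma_b$ and $\Sigma$), the Sobolev embedding $H^s \hookrightarrow L^\infty$ for $s > n/2$, and a duality argument for the third item. I will treat items 1 and 2 as instances of a general paraproduct estimate and derive item 3 from item 2.

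For item 1, I would first handle the endpoints $r=0$ and $r=s_1$. The case $r=0$ is immediate: $\norm{fg}_0 \le \norm{f}_{L^\infty} \norm{g}_0 \ls \norm{f}_{s_1} \norm{g}_{s_2}$, where the first inequality uses Hölder and the second uses $s_1 > n/2$ via Sobolev embedding, together with $s_2 \ge 0$. The case $r=s_1$ is the statement that $H^{s_1}$ is a Banach algebra when $s_1 > n/2$; I would prove it either through a Littlewood-Paley/Kato-Ponce style commutator argument or by writing Bony's decomposition $fg = T_f g + T_g f + R(f,g)$ and bounding all three pieces by $\norm{f}_{s_1}\norm{g}_{s_1} \le \norm{f}_{s_1}\norm{g}_{s_2}$, using $s_1 > n/2$ to bound $\norm{f}_{L^\infty}$ and $\norm{g}_{L^\infty}$ where needed. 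The full range $0 \le r \le s_1$ then follows by complex interpolation between $L^2$ and $H^{s_1}$.

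For item 2, I would carry out Bony's decomposition $fg = T_f g + T_g f + R(f,g)$ on the Littlewood-Paley side and verify the paraproduct bounds at regularity level $r$. The paraproduct $T_f g$ requires only an $L^\infty$-type low-frequency bound on $f$, which is supplied by $\norm{f}_{s_1}$ since $s_1 \ge 0$, paired with $\norm{g}_{s_2}$ at the full $s_2$-level. The paraproduct $T_g f$ requires $g \in L^\infty$, which is guaranteed by $s_2 > r + n/2 \ge n/2$. The remainder $R(f,g)$ is controlled provided $s_1 + s_2 > r + n/2$, which follows from $s_1 \ge 0$ and $s_2 > r + n/2$. Summing these gives $\norm{fg}_r \ls \norm{f}_{s_1}\norm{g}_{s_2}$.

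Item 3 follows by duality. For any $\varphi \in H^{s_1}(\Sigma)$, the pairing identity $\langle fg,\varphi\rangle = \langle f, g\varphi\rangle$ yields
\begin{equation*}
\abs{\langle fg,\varphi\rangle} \le \norm{f}_{-r} \norm{g\varphi}_{r},
\end{equation*}
and applying item 2 with $\varphi$ in the role of ``$f$'' (at regularity $s_1$) and $g$ as ``$g$'' (at regularity $s_2$), all of whose hypotheses are satisfied by assumption, gives $\norm{g\varphi}_r \ls \norm{g}_{s_2}\norm{\varphi}_{s_1}$. Taking the supremum over $\varphi$ in the unit ball of $H^{s_1}(\Sigma)$ yields $\norm{fg}_{-s_1} \ls \norm{f}_{-r}\norm{g}_{s_2}$. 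The main obstacle is the careful paraproduct bookkeeping for item 2 in the intermediate regime (where neither factor is in $L^\infty$ at scale), and ensuring the decomposition passes cleanly from the torus to $\Omega$; both of these are standard, so I would either appeal to a reference such as Bahouri–Chemin–Danchin or give an explicit extension-by-reflection argument to reduce to the torus case.
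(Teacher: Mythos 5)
The paper does not actually prove this lemma; its entire proof is the citation ``See for example Lemma A.1 of \cite{guo_tice_per},'' so there is no in-paper argument to compare against at the level of technique. Your paraproduct-plus-duality scheme is a standard and correct route to these bounds, and item 3 by duality against item 2 is clean and exactly right (noting that this item is posed only on $\Sigma$, where $H^{-s_1}$ is honestly the dual of $H^{s_1}$). Two of the intermediate steps are slightly off, though neither affects correctness. In item 1 the interpolation at the end is superfluous: once you have the algebra bound $\norm{fg}_{s_1} \ls \norm{f}_{s_1}\norm{g}_{s_1} \le \norm{f}_{s_1}\norm{g}_{s_2}$, the estimate for every $0 \le r \le s_1$ follows from the trivial embedding $H^{s_1}\hookrightarrow H^r$; there is nothing left to interpolate, and the $r=0$ base case is likewise unnecessary. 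In item 2, the explanation that the paraproduct $T_f g$ ``requires only an $L^\infty$-type low-frequency bound on $f$, which is supplied by $\norm{f}_{s_1}$ since $s_1\ge 0$'' names the wrong mechanism: when $0\le s_1 \le n/2$ you do not get any $L^\infty$ control of $f$, and the correct estimate is the negative-index paraproduct bound $\norm{T_f g}_{H^{s_2+s_1-n/2}} \ls \norm{f}_{B^{s_1-n/2}_{\infty,\infty}}\norm{g}_{H^{s_2}}$ combined with the embedding $H^{s_1}\hookrightarrow B^{s_1-n/2}_{\infty,\infty}$; the hypotheses $s_1\ge 0$ and $s_2 > r + n/2$ then place $r$ strictly below $s_2 + s_1 - n/2$, which is what you actually need. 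Finally, the reduction from the slab $\Omega$ to the periodic setting is cleanest via a Stein-type total extension operator; reflection across the two flat boundaries is fine at integer regularity but needs extra care at the fractional indices that occur throughout the paper.
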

\begin{proof}
 See for example Lemma A.1 of \cite{guo_tice_per}.
\end{proof}

\subsection{Poisson integral}

Suppose that $\Sigma = (L_1 \mathbb{T}) \times (L_2 \mathbb{T})$.  We define the Poisson integral in $\Omega_- = \Sigma \times (-\infty,0)$ by
\begin{equation}\label{poisson_def_per}
\mathcal{P} f(x) = \sum_{n \in   (L_1^{-1} \mathbb{Z}) \times (L_2^{-1} \mathbb{Z}) }  e^{2\pi i n \cdot x'} e^{2\pi \abs{n}x_3} \hat{f}(n),
\end{equation}
where for $n \in   (L_1^{-1} \mathbb{Z}) \times (L_2^{-1} \mathbb{Z})$ we have written
\begin{equation}
 \hat{f}(n) = \int_\Sigma f(x')  \frac{e^{-2\pi i n \cdot x'}}{L_1 L_2} dx'.
\end{equation}
It is well known that $\mathcal{P}: H^{s}(\Sigma) \rightarrow H^{s+1/2}(\Omega_-)$ is a bounded linear operator for $s>0$.   We now show that how derivatives of $\mathcal{P} f$ can be estimated in the smaller domain $\Omega$.

\begin{lem}\label{p_poisson}
Let $\mathcal{P} f$ be the Poisson integral of a function $f$ that is either in $\dot{H}^{q}(\Sigma)$ or $\dot{H}^{q-1/2}(\Sigma)$ for $q \in \mathbb{N}$.  Then
\begin{equation} 
 \ns{\nab^q \mathcal{P}f }_{0} \ls \norm{f}_{\dot{H}^{q-1/2}(\Sigma)}^2 \text{ and }  \ns{\nab^q \mathcal{P}f }_{0} \ls \norm{f}_{\dot{H}^{q}(\Sigma)}^2.
\end{equation}
\end{lem}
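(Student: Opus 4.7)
The plan is to exploit the explicit Fourier series representation \eqref{poisson_def_per} together with Parseval's identity on $\Sigma$, reducing everything to a one-dimensional integral in $x_3$ over $(-b,0)$.

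First I would differentiate \eqref{poisson_def_per} termwise. For a multi-index $\alpha \in \mathbb{N}^3$ with $|\alpha| = q$, the derivative $\partial^\alpha \mathcal{P}f$ has Fourier expansion
\[
\partial^\alpha \mathcal{P}f(x) = \sum_{n \in (L_1^{-1}\Z)\times(L_2^{-1}\Z)} c_\alpha(n)\, e^{2\pi i n \cdot x'} e^{2\pi |n| x_3} \hat{f}(n),
\]
where each coefficient $c_\alpha(n)$ arises by pulling down factors of $2\pi i n_1$, $2\pi i n_2$, or $2\pi |n|$ (from the $\partial_3$ derivatives acting on $e^{2\pi|n|x_3}$). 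In particular $|c_\alpha(n)| \ls |n|^{q}$ uniformly in $\alpha$. Summing over all $\alpha$ with $|\alpha| = q$ and invoking Parseval in the horizontal variable $x'$ yields, for each fixed $x_3 \in (-b,0)$,
\[
\int_\Sigma |\nab^q \mathcal{P}f(x',x_3)|^2\,dx' \ls \sum_{n} |n|^{2q}\, e^{4\pi |n| x_3}\, |\hat{f}(n)|^2.
\]
Note the $n=0$ mode contributes nothing since $q \ge 1$ implies $|c_\alpha(0)| = 0$.

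Next I would integrate this in $x_3$ over $(-b,0)$ and swap sum and integral by Tonelli:
\[
\ns{\nab^q \mathcal{P}f}_0 \ls \sum_{n \ne 0} |n|^{2q}\,|\hat{f}(n)|^2 \int_{-b}^0 e^{4\pi |n| x_3}\,dx_3.
\]
The two desired bounds now follow from two different elementary estimates on the $x_3$ integral. For the first, exact integration gives
\[
\int_{-b}^0 e^{4\pi |n| x_3}\,dx_3 = \frac{1 - e^{-4\pi |n| b}}{4\pi |n|} \le \frac{1}{4\pi |n|},
\]
so the weight $|n|^{2q}$ is reduced to $|n|^{2q-1}$, producing $\sum_{n\ne 0} |n|^{2q-1}|\hat{f}(n)|^2 \simeq \norm{f}_{\dot{H}^{q-1/2}(\Sigma)}^2$. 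For the second, the trivial bound $\int_{-b}^0 e^{4\pi|n|x_3}\,dx_3 \le b$ leaves the weight $|n|^{2q}$ intact, giving $\ls \norm{f}_{\dot{H}^{q}(\Sigma)}^2$.

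There is no real obstacle here: the result is essentially a computation, and the only subtle point worth flagging is that a vertical derivative $\partial_3$ of the Poisson kernel $e^{2\pi|n|x_3}$ produces the same symbol weight $|n|$ as a horizontal derivative, which is what makes it legitimate to combine all $q$ derivatives on an equal footing. The use of $\dot{H}^s$ rather than $H^s$ throughout is consistent with this, since the $n=0$ frequency drops out automatically.
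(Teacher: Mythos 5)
The paper does not supply its own proof; it simply cites Lemma A.3 of \cite{guo_tice_per}. Your argument is the standard, self-contained proof one would give: differentiate the explicit Fourier series term by term, observe that both horizontal derivatives and $\partial_3$ contribute symbol factors of size $|n|$, apply Parseval in $x'$ for each fixed $x_3$, and then integrate the resulting one-dimensional Gaussian-type kernel $e^{4\pi|n|x_3}$ over $(-b,0)$, using the two elementary bounds $\int_{-b}^0 e^{4\pi|n|x_3}\,dx_3 \le \min\{b,\,(4\pi|n|)^{-1}\}$ to produce the two stated inequalities. This is correct and is surely essentially the same computation as in the cited reference.

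One corner case deserves a word. The paper's convention is $0\in\mathbb{N}$, so the lemma nominally allows $q=0$. Your remark that the $n=0$ mode drops out ``since $q\ge 1$ implies $c_\alpha(0)=0$'' covers only $q\ge 1$; for $q=0$ the zero frequency contributes $b\,|\hat f(0)|^2$ to $\ns{\mathcal{P}f}_0$, which is not controlled by $\norm{f}_{\dot H^{-1/2}(\Sigma)}^2$. The first inequality at $q=0$ therefore requires $\hat f(0)=0$. In the paper's only application the argument is applied to $\mathcal{P}\eta$ with $\eta$ of zero average (see \eqref{z_avg}), so this is harmless, but it is worth stating the hypothesis $\hat f(0)=0$ (or restricting to $q\ge 1$) if you want the lemma to be literally correct as written.
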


\begin{proof}
See Lemma A.3 in \cite{guo_tice_per}
\end{proof}

We will also need $L^\infty$ estimates.
\begin{lem}\label{p_poisson_2}
Let $\mathcal{P} f$ be the Poisson integral of a function $f$ that is in $\dot{H}^{q+s}(\Sigma)$ for $q\ge 1$ an integer and $s> 1$.  Then
\begin{equation} 
 \ns{\nab^q \mathcal{P}f }_{L^\infty} \ls \ns{f}_{\dot{H}^{q+s}}.
\end{equation}
The same estimate holds for $q=0$ if $f$ satisfies $\int_{\Sigma} f =0$.
\end{lem}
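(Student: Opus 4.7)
The strategy is a direct Fourier series computation on the torus, exactly parallel to the proof of Lemma \ref{p_poisson}, but using Cauchy--Schwarz in a slightly different way to trade a higher Sobolev norm on the boundary for pointwise control in $\Omega$. First I would recall from \eqref{poisson_def_per} that
\begin{equation}
 \mathcal{P}f(x',x_3) = \sum_{n \in \Lambda} e^{2\pi i n \cdot x'} e^{2\pi \abs{n} x_3} \hat{f}(n),
\end{equation}
where $\Lambda = (L_1^{-1}\Z) \times (L_2^{-1}\Z)$. Differentiating termwise, any multi-index $\alpha$ with $\abs{\alpha} = q$ produces a Fourier series whose $n$-th coefficient is bounded in modulus by $C\abs{n}^q e^{2\pi \abs{n} x_3} \abs{\hat{f}(n)}$ (the horizontal derivatives contribute powers of $n_1,n_2$, while each $\p_3$ contributes a factor $2\pi\abs{n}$, and all such factors are dominated by $\abs{n}^q$ up to a universal constant).

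Next, taking absolute values inside the sum and using that $x_3 \le 0$ on $\Omega$, so $e^{2\pi \abs{n} x_3} \le 1$, gives the pointwise bound
\begin{equation}
 \abs{\nab^q \mathcal{P}f(x)} \ls \sum_{n \in \Lambda} \abs{n}^q \abs{\hat{f}(n)}
\end{equation}
uniformly in $x \in \Omega$. Under either hypothesis of the lemma, the term $n = 0$ can be discarded: for $q \ge 1$ the factor $\abs{n}^q$ already kills it, and for $q = 0$ the assumption $\int_\Sigma f = 0$ gives $\hat{f}(0) = 0$. I would then apply Cauchy--Schwarz on $\Lambda \setminus\{0\}$ by writing $\abs{n}^q = \abs{n}^{q+s} \cdot \abs{n}^{-s}$:
\begin{equation}
 \sum_{n \neq 0} \abs{n}^q \abs{\hat{f}(n)} \le \bigg( \sum_{n \neq 0} \abs{n}^{2(q+s)} \abs{\hat{f}(n)}^2 \bigg)^{1/2} \bigg( \sum_{n \neq 0} \abs{n}^{-2s} \bigg)^{1/2}.
\end{equation}
The first factor is, up to universal constants depending on $L_1,L_2$, exactly $\norm{f}_{\dot{H}^{q+s}(\Sigma)}$.

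The only nontrivial check is finiteness of the second factor, which is the one step I would be most careful about: since $\Lambda\setminus\{0\}$ is a rescaled copy of $\Z^2\setminus\{0\}$, the series $\sum_{n \in \Lambda\setminus\{0\}} \abs{n}^{-2s}$ converges precisely when $2s > 2$, i.e.\ when $s > 1$, which is the hypothesis. Squaring the resulting pointwise bound yields $\ns{\nab^q \mathcal{P}f}_{L^\infty} \ls \ns{f}_{\dot{H}^{q+s}(\Sigma)}$, completing the argument in both the $q \ge 1$ and the zero-average $q = 0$ cases.
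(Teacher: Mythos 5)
Your proof is correct, and it follows the same standard route that the cited Lemma A.4 of \cite{guo_tice_per} uses: expand the Poisson extension in its Fourier series, bound the vertical exponential by $1$ in $\Omega$, observe that $\hat f(0)$ drops out either because $q\ge 1$ or because $f$ has zero average, and then split $\abs{n}^q = \abs{n}^{q+s}\abs{n}^{-s}$ and apply Cauchy--Schwarz, using $s>1$ to ensure that $\sum_{n\ne 0}\abs{n}^{-2s}$ converges on the two-dimensional dual lattice. The one small polish worth adding is an explicit remark that the Cauchy--Schwarz bound itself shows the differentiated series converges absolutely and uniformly, which retroactively justifies the termwise differentiation and the interchange of sum and absolute value.
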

\begin{proof}
See Lemma A.4 in \cite{guo_tice_per}
\end{proof}


\end{document}